\begin{document}
\newtheorem{thm}{Theorem}[section]
\newtheorem{lem}[thm]{Lemma}
\newtheorem{prop}[thm]{Proposition}
\newtheorem{cor}[thm]{Corollary}
\theoremstyle{definition}
\newtheorem{ex}[thm]{Example}
\newtheorem{rem}[thm]{Remark}
\newtheorem{prob}[thm]{Problem}
\newtheorem{thmA}{Theorem}
\renewcommand{\thethmA}{}
\newtheorem{defi}[thm]{Definition}
\renewcommand{\thedefi}{}
\input amssym.def
\long\def\alert#1{\smallskip{\hskip\parindent\vrule%
\vbox{\advance\hsize-2\parindent\hrule\smallskip\parindent.4\parindent%
\narrower\noindent#1\smallskip\hrule}\vrule\hfill}\smallskip}
\def\ff{\frak}
\def\Spec{\mbox{\rm Spec}}
\def\type{\mbox{ type}}
\def\Hom{\mbox{ Hom}}
\def\rank{\mbox{ rank}}
\def\Ext{\mbox{ Ext}}
\def\Ker{\mbox{ Ker}}
\def\Max{\mbox{\rm Max}}
\def\End{\mbox{\rm End}}
\def\l{\langle\:}
\def\r{\:\rangle}
\def\Rad{\mbox{\rm Rad}}
\def\Zar{\mbox{\rm Zar}}
\def\Supp{\mbox{\rm Supp}}
\def\Rep{\mbox{\rm Rep}}
\def\cal{\mathcal}
\title[n-fold filters in residuated lattices]{n-fold filters in residuated lattices}
\thanks{2000 {\it Mathematics Subject Classification.}
Primary 06D99, 08A30}
\thanks{\today}
\author{Albert Kadji, Celestin Lele, Marcel Tonga}
\address{Department of Mathematics, University of Yde 1 } \email{kadjialbert@yahoo.fr}
\address{Department of Mathematics, University of Dschang }\email{celestinlele@yahoo.com}
\address{Department of Mathematics, University of Yde 1 }\email{tongamarcel@yahoo.fr}
\begin{abstract}Residuated lattices play an important role in the
study of fuzzy logic based of t-norm. In this paper, we introduced
the notions of n-fold implicative filters, n-fold positive
implicative filters, n-fold boolean  filters, n-fold fantastic
filters, n-fold normal  filters and  n-fold obstinate  filters in
residuated lattices  and study the relations among them.\\
This generalized the similar existing results  in BL-algebra with the connection of the
 work of Kerre and all in \cite{B11},  Kondo and all in \cite{B6}, \cite{B8} and Motamed and all in \cite{B7}.\\
 At the end of this paper, we draw two diagrams; the
 first one describe the relations between some type of n-fold
 filters in residuated lattices and the second one describe the relations
  between some type of n-fold residuated lattices.
 \vspace{0.20in}\\
{\noindent} Key words: residuated lattices, filters, n-fold filters,
n-fold  residuated lattices.
\end{abstract}
 \maketitle
\section{Introduction}
Since H\'{a}jek introduced his Basic Fuzzy logics, (BL-logics) in
short in 1998 \cite{B1}, as logics of continuous t-norms, a
multitude research papers related to algebraic counterparts of
BL-logics, has been published. In \cite{B2}, \cite{B3},\cite{B7}
and \cite{B10}  the authors defined the notion of n-fold
implicative filters, n-fold positive implicative filters, n-fold
boolean filters, n-fold fantastic filters, n-fold obstinate
filters, n-fold normal filters in
BL-algebras and studied the relation among many type of n-fold filters in BL-algebra.\\
The aim of this paper is to extend this research
to  residuated lattices with the connection of the results obtaining in \cite{B11}, \cite{B8}, \cite{B6}. \\
\section{Preliminaries}
A \textit{residuated lattice} is a nonempty set $L$ with four
binary operations
 $\wedge, \vee, \otimes, \rightarrow$, and two constants $0,1$ satisfying:\\
L-1: $\mathbb{L}(L):=(L,\wedge, \vee, 0, 1)$ is a bounded lattice;\\
L-2: $(L,\otimes, 1)$ is a commutative monoid;\\
L-3: $x\otimes y\leq z$ iff $x\leq y\rightarrow z$ (Residuation);\\
A \textit{MTL-algebra} is a residuated lattice  $L$ which satisfies the following  condition: \\
L-4: $(x\rightarrow y)\vee (y\rightarrow x)=1$ (Prelinearity);\\
A \textit{BL-algebra} is a MTL-algebra  $L$ which satisfies the following condition: \\
L-5: $x\wedge y=x\otimes (x\rightarrow y)$ (Divisibility).\\
A \textit{MV-algebra} is a BL-algebra  $L$ which satisfies the following  condition: \\
L-6: $ \overline{\overline{x}}=x $ where $\overline{x}:=x\rightarrow
0$.

In this work, unless mentioned otherwise, $(L,\wedge, \vee, \otimes,
\rightarrow, 0, 1)$  will be a residuated lattice, which
will often be referred by its support set $L$.\\
\begin{prop} \label{pro}
\cite{B4},\cite{B5},\cite{B6},\cite{B8}For all $x,y,z \in L$
\begin{align}
& x\leq y \; \text{iff}\; x\rightarrow y=1; x\otimes y \leq x\wedge y;\\
& x\rightarrow(y\rightarrow z)=(x\otimes y)\rightarrow z;\\
& x\rightarrow (y\rightarrow z)=y\rightarrow (x\rightarrow z);\\
& \text{If}\; x\leq y, \; \text{then}\; y\rightarrow z\leq x\rightarrow z\; \text{and}\; z\rightarrow x\leq z\rightarrow y;\\
& x\leq y\rightarrow (x\otimes y);  x\otimes (x\rightarrow y)\leq y;\\
& 1\rightarrow x=x; x\rightarrow x=1; x\rightarrow 1=1; x\leq y\rightarrow x, x\leq \bar{\bar{x}}, \bar{\bar{\bar{x}}}=\bar{x};\\
& x\otimes \bar{x}= 0; x\otimes y=0\; \text{iff}\;   x\leq \bar{y};\\
& x\leq y\; \text{implies}\;  x\otimes z \leq y\otimes z,  z \rightarrow x \leq  z \rightarrow y, y \rightarrow z \leq  x \rightarrow z, \bar{y} \leq \bar{x} ;\\
& \overline{x\otimes y}  =  x \rightarrow \bar{y};\\
& x\vee y= 1 \; \text{implies}\; x\otimes y = x\wedge y \; \text{and}\ x^{n}\vee y^{n}= 1\; \text{for every }\; n\geq 1;\\
& x\otimes (y\vee z)= (x\otimes y)\vee (x\otimes z);\\
& (x\vee y)\rightarrow z= (x\rightarrow z)\wedge(y\rightarrow z); (x\rightarrow z)\vee(y\rightarrow z)\leq (x\wedge y)\rightarrow z ;\\
& (x\vee y)\otimes (x\vee z)\leq x\vee (y\otimes z),\; \text{hence}\; ( x\vee y)^{mn}\leq x^{n}\vee y^{m}   ;\\
& x\vee y\leq ((x\rightarrow y)\rightarrow y)\wedge ((y\rightarrow x)\rightarrow x);\\
& x\rightarrow y\leq (y\rightarrow z)\rightarrow (x\rightarrow z);\\
& y\rightarrow x\leq (z\rightarrow y)\rightarrow (z\rightarrow x);\\
&((x\rightarrow y)\rightarrow y)\rightarrow y=x\rightarrow y .
\end{align}
\end{prop}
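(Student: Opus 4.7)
The plan is to derive all seventeen identities as a graded sequence of small residuation manipulations; each one is a standard consequence of L-3 together with commutativity of $\otimes$ and the bounded-lattice structure, so the main work is bookkeeping rather than discovering new ideas. I would work through them in the order listed, since later items typically invoke earlier ones, and throughout I would exploit the single organising principle that $x\otimes(-)$ is left adjoint to $(-)\rightarrow y$, so every inequality in $\rightarrow$ rewrites as one in $\otimes$ and vice versa.

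For items (1)--(9) the arguments are routine. Identity (1) follows by applying L-3 to $x\otimes 1\leq y$, and $x\otimes y\leq x\wedge y$ is monotonicity applied against $y\leq 1$ and $x\leq 1$. For (2), chain residuation twice: $a\leq x\rightarrow(y\rightarrow z)$ iff $a\otimes x\otimes y\leq z$ iff $a\leq(x\otimes y)\rightarrow z$; then (3) is immediate by commutativity of $\otimes$. The monotonicity facts (4) and (8) come from comparing both sides via L-3 against a common witness, and (5) reduces to reflexivity after applying L-3. Items (6), (7) and (9) are short consequences of $1\otimes x=x$ together with the definition $\bar{x}:=x\rightarrow 0$; in particular $\bar{\bar{\bar{x}}}=\bar{x}$ is the special case $y=0$ of (17), which I would defer.

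The first nonroutine step is (11), distributivity of $\otimes$ over $\vee$. This holds in every residuated lattice because $x\otimes(-)$ is a left adjoint and hence preserves all existing joins; concretely, one verifies the nontrivial direction $x\otimes(y\vee z)\leq(x\otimes y)\vee(x\otimes z)$ by rewriting it via L-3 as $y\vee z\leq x\rightarrow((x\otimes y)\vee(x\otimes z))$, which in turn reduces by the universal property of $\vee$ to the two trivial cases $x\otimes y\leq(x\otimes y)\vee(x\otimes z)$ and $x\otimes z\leq(x\otimes y)\vee(x\otimes z)$. With (11) in hand, (10) follows from $x\wedge y=(x\wedge y)\otimes(x\vee y)\leq(y\otimes x)\vee(x\otimes y)=x\otimes y$, and then $x^n\vee y^n=1$ is obtained from (13); (13) comes by expanding $(x\vee y)\otimes(x\vee z)$ via (11) and absorbing $x^2$, $y\otimes x$, $x\otimes z$ under $x$, then iterating on the exponents; (12) is two applications of L-3 together with the universal properties of join and meet.

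For (14)--(17) the principal input is (6): $y\leq(x\rightarrow y)\rightarrow y$, with the symmetric statement for $x$. Combined with the monotonicity in (4), this yields (14), and the transitivity-type inequalities (15) and (16) follow from $x\otimes(x\rightarrow y)\otimes(y\rightarrow z)\leq z$ by two applications of L-3. The step I expect to require the most care is (17): one direction $x\rightarrow y\leq((x\rightarrow y)\rightarrow y)\rightarrow y$ is the instance of (14) with $x$ replaced by $x\rightarrow y$, and the reverse $((x\rightarrow y)\rightarrow y)\rightarrow y\leq x\rightarrow y$ arises by applying the antimonotone operation $(-)\rightarrow y$ to that same inequality. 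The only genuine subtlety in the whole proposition is keeping the direction of arrows straight while iterating double negation, and getting (17) right immediately settles the deferred identity $\bar{\bar{\bar{x}}}=\bar{x}$ from (6).
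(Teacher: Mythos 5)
Your outline is essentially correct, but note that the paper itself offers no proof of this proposition at all: it is quoted as a list of known facts with citations to \cite{B4}, \cite{B5}, \cite{B6}, \cite{B8}, so there is no internal argument to compare against. What you propose is the standard self-contained derivation found in those references, organised around the single adjunction of L-3, and all the individual reductions you describe do go through: (1)--(9) from residuation and the unit law, (11) from preservation of joins by $x\otimes(-)$, then (10), (12), (13) from (11) and the universal properties, and (14)--(17) from $x\le (x\rightarrow y)\rightarrow y$, $y\le (x\rightarrow y)\rightarrow y$ and antitonicity. Two small bookkeeping points deserve attention if you write this out in full. First, in (1) you invoke monotonicity of $\otimes$, which is listed only as item (8); this is harmless since monotonicity is itself a one-line consequence of L-3, but you should derive it on the spot rather than cite a later item. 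Second, in (17) the phrase ``applying $(-)\rightarrow y$ to that same inequality'' is off as literally stated: applying $(-)\rightarrow y$ to $x\rightarrow y\leq((x\rightarrow y)\rightarrow y)\rightarrow y$ gives an inequality about a fourfold implication, not the one you want. The correct move is to apply the antitone map $(-)\rightarrow y$ to the unsubstituted inequality $x\leq(x\rightarrow y)\rightarrow y$, which yields $((x\rightarrow y)\rightarrow y)\rightarrow y\leq x\rightarrow y$; together with the substituted instance you cite for the other direction, this completes (17) and, as you say, settles $\bar{\bar{\bar{x}}}=\bar{x}$ by taking $y=0$. With those adjustments the plan is a complete and correct proof, somewhat more explicit than the paper, which simply defers to the literature.
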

Besides equations (1)-(17), we will use the following results.\\

 Fact 1\\ A nonempty subset $F$ of a residuated lattice $(L,\wedge, \vee, \otimes,
\rightarrow, 0, 1)$ is called a \textit{filter} if it satisfies:
(F1): For every $x,y\in F$, $x\otimes y\in F$;\\
(F2): For every $x,y\in L$, if $x\leq y$ and $x\in F$, then $y\in F$.\\
A deductive system of a residuated lattices $L$ is a subset $F$
containing $1$ such that for all $x,y\in L$;
$  x\rightarrow  y \in F  \ \ \text{and} \ \ x\in F \ \ \text{imply}\ \ y\in F.$\\
It is known that in a residuated lattices, filters and deductive systems coincide \cite{B4}. \\
Fact 2\\ The following Examples will be use as a residuated lattices
which are not
  BL-algebra.

\begin{ex}\cite{B9} \label{expob1}
Let $ L=\{ 0, a, b, c, d, 1 \} $ be a lattice such that $0 < a <c
$, $0< b < c < d< 1 $, $a$ and $b$ are incomparable.
  Define the operations $\otimes $ and $\rightarrow $ by the two tables. Then
  $L$ is a residuated lattice which is not a
  BL-algebra since $(a\longrightarrow b)\vee (b\longrightarrow a)= c\neq 1$.
\begin{center}
\begin{tabular}{|c|c|c|c|c|c|c|}
\hline
 $\otimes$ & $0$ & $a$ & $b$ & $c$& $d$ & $1$ \\
  \hline
  $0$ & $0$ & $0$ & $0$ & $0$& $0$ & $0$  \\
  $a$ & $0$ & $a$ & $0$ & $a$& $a$ & $a$  \\
  $b$ & $0$ & $0$ & $b$ & $b$& $b$ & $b$  \\
  $c$ & $0$ & $a$ & $b$ & $c$& $c$ & $c$   \\
  $d$ & $0$ & $a$ & $b$ & $c$& $c$ & $d$   \\
  $1$ & $0$ & $a$ & $b$ & $c$& $d$ & $1$   \\
    \hline
\end{tabular}
\hspace{2 cm}
\begin{tabular}{|c|c|c|c|c|c|c|}
  \hline
 $\longrightarrow$ & $0$ & $a$ & $b$ & $c$& $d$ & $1$ \\
  \hline
  $0$ & $1$ & $1$ & $1$ & $1$& $1$ & $1$  \\
  $a$ & $b$ & $1$ & $b$ & $1$& $1$ & $1$  \\
  $b$ & $a$ & $a$ & $1$ & $1$& $1$ & $1$  \\
  $c$ & $0$ & $a$ & $b$ & $1$& $1$ & $1$  \\
  $d$ & $0$ & $a$ & $b$ & $d$& $1$ & $1$  \\
  $1$ & $0$ & $a$ & $b$ & $c$& $d$ & $1$  \\
    \hline
\end{tabular}
\end{center}
 $F= \{1,b,c,d\}$; $F_{1}= \{1,a,c,d\}$; $F_{2}= \{1,c,d\}$  are proper filters  of $L$.
\end{ex}
\begin{ex}\cite{B9} \label{expob2}
Let $ L=\{ 0, a, b, c, d, 1 \} $ be a lattice such that
$0<a,b,d,c<1 $,  $a,b,c,d$  are pairwise  incomparable.
  Define the operations $\otimes $ and $\rightarrow $ by the two tables. Then
  $L$ is a residuated lattice which is not a
  BL-algebra since $a\otimes(a\longrightarrow b)= b\neq 0= a\wedge b$.
\begin{center}
\begin{tabular}{|c|c|c|c|c|c|c|}
\hline
 $\otimes$ & $0$ & $a$ & $b$ & $c$& $d$ & $1$ \\
  \hline
  $0$ & $0$ & $0$ & $0$ & $0$& $0$ & $0$  \\
  $a$ & $0$ & $a$ & $b$ & $d$& $d$ & $a$  \\
  $b$ & $0$ & $b$ & $b$ & $0$& $0$ & $b$  \\
  $c$ & $0$ & $d$ & $0$ & $d$& $d$ & $c$   \\
  $d$ & $0$ & $d$ & $0$ & $d$& $d$ & $d$   \\
  $1$ & $0$ & $a$ & $b$ & $c$& $d$ & $1$   \\
    \hline
\end{tabular}
\hspace{2 cm}
\begin{tabular}{|c|c|c|c|c|c|c|}
  \hline
 $\longrightarrow$ & $0$ & $a$ & $b$ & $c$& $d$ & $1$ \\
  \hline
  $0$ & $1$ & $1$ & $1$ & $1$& $1$ & $1$  \\
  $a$ & $a$ & $1$ & $b$ & $c$& $c$ & $1$  \\
  $b$ & $c$ & $a$ & $1$ & $c$& $c$ & $1$  \\
  $c$ & $b$ & $a$ & $b$ & $1$& $a$ & $1$  \\
  $d$ & $b$ & $a$ & $b$ & $a$& $1$ & $1$  \\
  $1$ & $0$ & $a$ & $b$ & $c$& $d$ & $1$  \\
    \hline
\end{tabular}
\end{center}
 $F= \{1,c,d\}$ is a proper filter  of $L$.
\end{ex}
 \begin{ex}\cite{B4} \label{exp}
Let $ L=\{ 0, a, b, c, d, 1 \} $ be a lattice such that $0 < a<c<d
<1 $, $0< b < c < d< 1 $, $a$ and $b$ are incomparable.
  Define the operations $\otimes $ and $\rightarrow $ by the two tables. Then
  $(L, \wedge, \vee, \otimes, \rightarrow, 0, 1)$ is a residuated lattices which is
  not a BL-algebra since $(a\longrightarrow b)\vee (b\longrightarrow a)= c\vee c=c\neq
  1$.
\begin{center}
\begin{tabular}{|c|c|c|c|c|c|c|}
\hline
 $\otimes$ & $0$ & $a$ & $b$ & $c$& $d$ & $1$ \\
  \hline
  $0$ & $0$ & $0$ & $0$ & $0$& $0$ & $0$  \\
  $a$ & $0$ & $0$ & $0$ & $0$& $a$ & $a$  \\
  $b$ & $0$ & $0$ & $0$ & $0$& $b$ & $b$  \\
  $c$ & $0$ & $0$ & $0$ & $0$& $c$ & $c$   \\
  $d$ & $0$ & $a$ & $b$ & $c$& $d$ & $d$   \\
  $1$ & $0$ & $a$ & $b$ & $c$& $d$ & $1$   \\
    \hline
\end{tabular}
\hspace{2 cm}
\begin{tabular}{|c|c|c|c|c|c|c|}
  \hline
 $\longrightarrow$ & $0$ & $a$ & $b$ & $c$& $d$ & $1$ \\
  \hline
  $0$ & $1$ & $1$ & $1$ & $1$& $1$ & $1$  \\
  $a$ & $c$ & $1$ & $c$ & $1$& $1$ & $1$  \\
  $b$ & $c$ & $c$ & $1$ & $1$& $1$ & $1$  \\
  $c$ & $c$ & $c$ & $c$ & $1$& $1$ & $1$  \\
  $d$ & $0$ & $a$ & $b$ & $c$& $1$ & $1$  \\
  $1$ & $0$ & $a$ & $b$ & $c$& $d$ & $1$  \\
    \hline
\end{tabular}
\end{center}
 $F= \{1,d\}$ is a proper filter of $L$.
\end{ex}

 \begin{ex}\cite{B5} \label{expp}
Let $ L=\{ 0, a, b, c,  1 \} $ be a lattice such that $0 <c<a, b<1$,
 $a, b$ are incomparable.
  Define the operations $\otimes $ and $\rightarrow $ by the two tables. Then
  $(L, \wedge, \vee, \otimes, \rightarrow, 0, 1)$ is a residuated lattice which is not
  BL-algebra since $a\otimes(a\longrightarrow b)=c\neq 0=a\wedge b$.
\begin{center}
\begin{tabular}{|c|c|c|c|c|c|}
\hline
 $\otimes$ & $0$ & $c$ & $a$ & $b$&  $1$ \\
  \hline
  $0$ & $0$ & $0$ & $0$ & $0$& $0$  \\
  $c$ & $0$ & $c$ & $c$ & $c$& $c$   \\
  $a$ & $0$ & $c$ & $a$ & $c$& $a$   \\
  $b$ & $0$ & $c$ & $c$ & $b$& $b$    \\
  $1$ & $0$ & $c$ & $a$ & $b$& $1$    \\

    \hline
\end{tabular}
\hspace{2 cm}
\begin{tabular}{|c|c|c|c|c|c|}
  \hline
 $\longrightarrow$ & $0$ & $c$ & $a$ & $b$&  $1$ \\
  \hline
  $0$ & $1$ & $1$ & $1$ & $1$& $1$  \\
  $c$ & $0$ & $1$ & $1$ & $1$& $1$  \\
  $a$ & $0$ & $b$ & $1$ & $b$& $1$ \\
  $b$ & $0$ & $a$ & $a$ & $1$& $1$ \\
  $1$ & $0$ & $c$ & $a$ & $b$& $1$ \\

    \hline
\end{tabular}
\end{center}
 $F_{1}= \{1,a\}, F_{2}= \{1,b\},  F_{3}= \{1,a,b,c\}$ are  proper filters of $L$.
\end{ex}
\begin{defi}\cite{B8}
A residuated lattice   $L$ is said to be locally finite if for
every $x\neq 1$, there exists an integer $n\geq 1$ such that
$x^{n}:=\underbrace{x\otimes x\cdots \otimes x}_{n\; times} =0$.
\end{defi}
\begin{defi}\cite{B6}\
Let $F$ be a filter of a residuated lattice $L$. For  $x, y \in
L$, a relation $\equiv_{F}$ on $L$, define  by $x \equiv_{F} y
\Longleftrightarrow (x\longrightarrow y, y\longrightarrow x) \in
F$, is a congruence on $L$ and a quotient structure  $L/F$ is also
a  residuated lattice where : $x/F\wedge y/F= (x\wedge y)/F$;
$x/F\vee/F= (x\vee y)/F$;  $x/F\otimes y/F= (x\otimes y)/F$;
$x/F\longrightarrow y/F= (x\longrightarrow y)/F$.
\end{defi}
\begin{defi} \cite{B11} \cite{B8} \cite{B4}  A Proper filter  $F$ is said to be:
\begin{itemize}
 \item[(i)] prime   if it satisfies the
following condition:\\ For all $x,y\in L$, $x\longrightarrow y \in
F$ or  $ y \longrightarrow x\in F$.
\item[(ii)] prime of the second kind  if it
satisfies the following condition:\\ For all $x,y\in L$, $x\vee y
\in F$ implies  $x \in F$ or  $ y \in F$.
\item[(iii)] prime of the third kind  if it
satisfies the following condition:\\ For all $x,y\in L$,
$(x\longrightarrow y) \vee (y \longrightarrow x)\in F$.
\item[(iv)]boolean
if it satisfies the following condition:\\ For all $x\in L$, $x
\vee\overline{x} \in F$.
\item[(v)]boolean filter in the second kind
if it satisfies the following condition:\\ For all $x\in L$, $x \in
F$ or  $\overline{x} \in F$.
\end{itemize}
\end{defi}
\begin{rem} \cite{B11}\cite{B8} \cite{B5}
\begin{itemize}
 \item[(i)]Prime filters are prime filters  in the second kind. The converse is
true if   $L$  is a MTL-algebra.
\item[(ii)]Prime filters are prime filters in the third  kind. The converse
is  true if $L$  is a MTL-algebra.
\item[(iii)]Boolean filters in the second kind are boolean filters.
\item[(iv)]   Maximal filters
are prime filters in the second kind.
\item[(v)]If  $L$  is a MTL-algebra, then   maximal filters
are prime filters.
\end{itemize}
\end{rem}
We have the following results.
\begin{prop} \cite{B5} \label{max} For any filter  $F$ of a
residuated lattices  $L$, the following conditions are equivalent:
\begin{itemize}
 \item[(i)]  $F$ is a maximal filter of $L$.
\item[(ii)] For any  $x \in L$, $x \notin F$  if and only if  $\overline{x^{n}} \in F$ for
some $n \geq  1$
\item[(iii)]For any  $x \notin F$, there is $f\in F$ and $n \geq  1$
such that $f\otimes x^{n} =0$
\end{itemize}
\end{prop}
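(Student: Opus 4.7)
The plan is to prove the cycle $\mathrm{(i)} \Rightarrow \mathrm{(ii)} \Rightarrow \mathrm{(iii)} \Rightarrow \mathrm{(i)}$, with the first implication being the substantive one and the remaining two essentially repackaging.

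First, for $\mathrm{(i)} \Rightarrow \mathrm{(ii)}$, I would fix $x \in L$ with $x \notin F$ and consider the filter $\langle F \cup \{x\} \rangle$ generated by $F$ and $x$. In a residuated lattice, a standard description (via F1 and F2) gives
\[
\langle F \cup \{x\} \rangle = \{ y \in L : f \otimes x^{n} \leq y \text{ for some } f \in F,\; n \geq 1 \}.
\]
Since $F$ is maximal and $x \notin F$, this filter must equal $L$, so in particular it contains $0$. Hence there exist $f \in F$ and $n \geq 1$ with $f \otimes x^{n} = 0$. By equation (7) of Proposition \ref{pro}, $f \otimes x^{n} = 0$ is equivalent to $f \leq \overline{x^{n}}$; closure of $F$ upward (F2) then yields $\overline{x^{n}} \in F$. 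Conversely, if $\overline{x^{n}} \in F$ and we had $x \in F$, then $x^{n} \in F$ by F1, and applying F1 again together with $x^{n} \otimes \overline{x^{n}} = 0$ (from equation (7)) would force $0 \in F$, contradicting that $F$ is proper (maximality is used here to guarantee properness).

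For $\mathrm{(ii)} \Rightarrow \mathrm{(iii)}$, given $x \notin F$, condition (ii) provides $n \geq 1$ with $\overline{x^{n}} \in F$; taking $f := \overline{x^{n}}$ and invoking $x \otimes \bar{x} = 0$ (equation (7)) in the form $x^{n} \otimes \overline{x^{n}} = 0$ immediately gives $f \otimes x^{n} = 0$.

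For $\mathrm{(iii)} \Rightarrow \mathrm{(i)}$, I would take any filter $G$ with $F \subsetneq G$ and pick $x \in G \setminus F$. By (iii) there exist $f \in F \subseteq G$ and $n \geq 1$ with $f \otimes x^{n} = 0$. But $x \in G$ forces $x^{n} \in G$ (by F1 applied repeatedly), and then $f \otimes x^{n} = 0 \in G$, so $G = L$; this shows $F$ is maximal. The only mild obstacle is the first implication, which requires the explicit description of the filter generated by $F \cup \{x\}$; everything else reduces to bookkeeping using equation (7) and the filter axioms F1, F2.
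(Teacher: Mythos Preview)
The paper does not supply its own proof of this proposition; it is quoted from \cite{B5} and used as a black box. Your argument is the standard one for this result and is correct in substance: the description of $\langle F \cup \{x\}\rangle$ together with equation~(7) handles $\mathrm{(i)}\Rightarrow\mathrm{(ii)}$, and the remaining implications are bookkeeping as you say.

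One small point to tighten in $\mathrm{(iii)}\Rightarrow\mathrm{(i)}$: your argument shows that any filter $G\supsetneq F$ must be $L$, but you do not verify that $F$ itself is proper. Condition~(iii) is vacuous for $F=L$, so as literally stated it does not exclude that case. In practice the equivalence is meant for proper filters (and indeed (ii) forces $0\notin F$, since taking $x=1$ gives $1\in F$, whence $\overline{1^{n}}=0\notin F$); you should either note this convention explicitly or add a one-line check that (iii) together with $0\neq 1$ and the existence of some $x\notin F$ suffices, or simply state the result for proper $F$.
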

Follows from Prop.\ref{max}, we have the following lemma:
\begin{lem}
$F$ is a maximal filter of $L$ if and only if  $L/F$ is a locally
finite residuated lattice.
\end{lem}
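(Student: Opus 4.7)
The plan is to apply Proposition~\ref{max} after translating its condition~(ii) through the quotient congruence $\equiv_F$.

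First I would set up the basic dictionary. For any $z\in L$, the relation $z\equiv_F 1$ means $(z\to 1,\,1\to z)\in F$; since $z\to 1=1$ always and $1\to z=z$ by~(6), this reduces to $z\in F$. Hence $z/F=1/F$ iff $z\in F$. Applied to $z=x^{n}$ (using $0\to x^{n}=1$), this yields the key equivalence
\[
\overline{x^{n}}\in F \iff x^{n}\equiv_F 0 \iff (x/F)^{n}=0/F.
\]

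For the forward direction, assume $F$ is maximal. Take any $x/F\neq 1/F$; the dictionary gives $x\notin F$, so Proposition~\ref{max}(ii) furnishes some $n\geq 1$ with $\overline{x^{n}}\in F$, which by the key equivalence is precisely $(x/F)^{n}=0/F$. Hence $L/F$ is locally finite.

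For the converse, assume $L/F$ is locally finite (in particular non-trivial, so $F$ is proper). I verify Proposition~\ref{max}(ii). If $x\notin F$ then $x/F\neq 1/F$, so local finiteness provides $n$ with $(x/F)^{n}=0/F$, i.e.\ $\overline{x^{n}}\in F$. Conversely, if $\overline{x^{n}}\in F$ while $x\in F$, then $x^{n}\in F$ by~(F1), and equation~(7) gives $x^{n}\otimes\overline{x^{n}}=0\in F$, contradicting properness.

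The only delicate point is the properness of $F$ in the reverse direction, which rests on the tacit assumption that a locally finite residuated lattice has $0\neq 1$; everything else is a mechanical translation via $\equiv_F$ combined with a single appeal to Proposition~\ref{max}.
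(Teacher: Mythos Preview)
Your proposal is correct and follows exactly the line the paper intends: the paper merely states that the lemma ``follows from Prop.~\ref{max}'' without writing out any details, and your argument is precisely the natural unpacking of that remark---translate condition~(ii) of Prop.~\ref{max} into the quotient via the congruence $\equiv_F$ and the identity $(x/F)^n=0/F\iff\overline{x^{n}}\in F$. Your observation about properness of $F$ in the converse direction (relying on $0\neq 1$ in $L/F$) is a genuine point that the paper leaves implicit.
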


Now, unless mentioned otherwise,  $n \geq 1$ will be an integer and
$ F\subseteq L$.
\section{SEMI MAXIMAL FILTER IN RESIDUATED LATTICES}
\begin{defi}\cite{B12}
Let $F$ be a proper filter of $L$. The intersection of  all maximal
filters of $L$ which contain $F$ is called the radical of $F$ and it
is denoted by Rad($F$).
\end{defi}
\begin{defi}
A proper filter  $F$ of $L$ is said to be a semi maximal filter of
$L$ if Rad($F$)= $F$.
\end{defi}
The following example shows that the notion of semi maximal filters
in  residuated lattices exist and semi maximal filter may not be
maximal filter.
\begin{ex}
Let $L$ be a residuated lattice from Example \ref{expob1}. It is
easy to check that Rad($\{1,c,d\}$)= $\{1,c,d\}$. Hence $\{1,c,d\}$
is a semi maximal filter of $L$.\\ But  $\{1,c,d\}\subseteq
\{1,a,c,d\}$ and $\{1,c,a,d\}$ is a filter of $L$, hence $\{1,c,d\}$
is a semi maximal filter which is not a  maximal filter of $L$.
\end{ex}
\begin{rem}
It is clear that  maximal filters are semi maximal filters.
\end{rem}
\section{N-FOLD IMPLICATIVE FILTER IN RESIDUATED LATTICES}
\begin{defi}
An n-fold implicative residuated lattice $L$  is a residuated
lattices which satifies the following condition: \\
 $x^{n+1}= x^{n} $ for all  $x,y \in  L $.
\end{defi}
The following examples shows that n-fold implicative residuated
lattices exist and that residuated lattice is not in general n-fold
implicative residuated lattice.
\begin{ex}
Let $L$ be a residuated lattice from Example \ref{exp}. We have:
\begin{itemize}
\item[(i)]  $a^{1+1}= 0\neq  a^{1}$ so
$L$ is not an 1-fold implicative residuated lattice.
\item[(ii)] For all $n \geq  2$,  $x^{n+1}= x^{n}$
 for all  $x,y \in  L$. So $L$ is an n-fold implicative residuated lattice for all $n
\geq  2$.
\end{itemize}
\end{ex}
\begin{defi}\label{defip2}
$F$  is an n-fold implicative filter  if it satisfies the
following conditions:
\begin{itemize}
\item[(i)]$1\in F$
\item[(ii)]For all  $x, y, z \in L$, if $x^{n}\longrightarrow (y\longrightarrow z)\in F$
 and $x^{n}\longrightarrow y\in
F$, then $x^{n}\longrightarrow z\in F$.
\end{itemize}
In particular 1-fold implicative filters are  implicative
filters.\cite{B6}
\end{defi}
\begin{ex}\label{ex1}Let $n\geq  1$ and
 $L$ be a residuated lattice from Example \ref{expp}.  Simple
computations proves that  $F_{1}= \{1,a\}, F_{2}= \{1,b\}, F_{3}=
\{1,a,b,c\}$ are n-fold implicative filters.
\end{ex}

The following lemma gives a characterization of n-fold implicative
filters.
\begin{lem}\label{lemob} Let $a\in L$.
Let   $F$ be a filter of $L$. Then $L_{a}=\{b\in L:
a^{n}\longrightarrow b\in F  \}$ is a filter of $L$ if and only if
$F$ is an n-fold implicative filter of $L$.
\end{lem}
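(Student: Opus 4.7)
The plan is to establish both directions of the biconditional by exploiting the equivalence between filters and deductive systems (Fact 1): it suffices to verify that $L_{a}$ contains $1$ and is closed under modus ponens. In both directions, the key observation is that membership of $a^{n}\to b$ in $F$ is precisely what converts the n-fold implicative condition (written with base $x^{n}$) into ordinary modus ponens (written with base $1$), so the whole proof amounts to a change of perspective rather than any genuine computation.

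For the direction $(\Leftarrow)$, suppose $F$ is an n-fold implicative filter and fix $a\in L$. Since $a^{n}\to 1=1\in F$ by property (6) of Proposition \ref{pro}, we have $1\in L_{a}$. Now take $b,c\in L$ with $b\in L_{a}$ and $b\to c\in L_{a}$; by definition $a^{n}\to b\in F$ and $a^{n}\to (b\to c)\in F$. Applying Definition \ref{defip2}(ii) with $x:=a$, $y:=b$, $z:=c$ yields $a^{n}\to c\in F$, i.e.\ $c\in L_{a}$. Thus $L_{a}$ is a deductive system, hence a filter by Fact 1.

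For the direction $(\Rightarrow)$, assume that $L_{a}$ is a filter for every $a\in L$ (this is the natural reading of the biconditional, since the n-fold implicative condition quantifies over all $x$). The condition $1\in F$ holds because $F$ is assumed to be a filter. For the main axiom, let $x,y,z\in L$ satisfy $x^{n}\to(y\to z)\in F$ and $x^{n}\to y\in F$. Taking $a:=x$, these translate to $y\to z\in L_{x}$ and $y\in L_{x}$. Since $L_{x}$ is a filter, by Fact 1 it is a deductive system, so modus ponens gives $z\in L_{x}$; unpacking the definition, $x^{n}\to z\in F$, as required.

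There is no real obstacle: the only point that needs mild care is that the forward implication is genuinely a statement about $L_{a}$ being a filter for every $a$, so one must quantify over $a$ when recovering the n-fold implicative property for arbitrary $x$. Once this is recognized, each half reduces to a one-line application of either Definition \ref{defip2}(ii) or of the deductive-system closure of $L_{x}$.
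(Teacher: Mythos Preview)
Your proof is correct and follows essentially the same approach as the paper: both directions verify that $L_{a}$ is a deductive system by directly translating the n-fold implicative condition of Definition \ref{defip2}(ii) into modus ponens for $L_{a}$, and conversely. Your explicit remark that the forward implication requires $L_{a}$ to be a filter for \emph{every} $a$ matches exactly how the paper phrases its converse hypothesis.
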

\begin{proof}
Let   $F$ be an n-fold implicative filter of $L$.  Since
$a^{n}\longrightarrow 1=1 \in F$, we have $ 1 \in L_{a}$.  Let
$x,y \in L$ be such that $x, x\longrightarrow y \in L_{a}$, then
$a^{n}\longrightarrow x \in F$ and  $a^{n}\longrightarrow
(x\longrightarrow y)\in F$. Since  $F$ is an n-fold implicative
filter of $L$, by Definition \ref {defip2}, $a^{n}\longrightarrow
y \in F$, hence $ y \in L_{a}$. Therefore $L_{a}$ is a filter of
$L$.\\ Conversely suppose that $L_{a}$ is a filter of $L$ for all
$a\in L$. Let $x, y, z \in L$ be such that $x^{n}\longrightarrow
(y\longrightarrow z)\in F$
 and $x^{n}\longrightarrow y\in
F$.  We have  $y, y\longrightarrow z \in L_{x}$, by the hypothesis
$L_{x}$ is a filter of $L$, so $ z \in L_{x}$ and hence
$x^{n}\longrightarrow z\in F$.
\end{proof}
The following proposition gives another characterization of n-fold
implicative filters in residuated lattices.
\begin{prop}\label{ch111} Let $F$ be a filter of $L$. Then for all  $x \in L$, the following conditions are equivalent:
\begin{itemize}
\item[(i)]$F$ is an n-fold implicative filter of $L$.
\item[(ii)]   $x^{n}\longrightarrow x^{2n}\in F$.
\end{itemize}
\end{prop}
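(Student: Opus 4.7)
The plan is to prove each direction using only the residuated-lattice identities already listed, with the central observation that equation (2) identifies $x^n\rightarrow(x^n\rightarrow z)$ with $x^{2n}\rightarrow z$, so the condition $x^n\rightarrow x^{2n}\in F$ serves as the ``bridge'' that lets one replace $x^{2n}$ by $x^n$ on the left of an implication inside $F$.

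For the direction (i) $\Rightarrow$ (ii), I will simply instantiate the defining implication of an n-fold implicative filter at $y:=x^n$ and $z:=x^{2n}$. Then the hypothesis $x^n\rightarrow(y\rightarrow z)\in F$ reads $x^n\rightarrow(x^n\rightarrow x^{2n})\in F$, which by (2) equals $x^{2n}\rightarrow x^{2n}=1$, and the second hypothesis $x^n\rightarrow y\in F$ reads $x^n\rightarrow x^n=1\in F$ (by (6)). The conclusion of the definition is exactly $x^n\rightarrow x^{2n}\in F$.

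For the converse (ii) $\Rightarrow$ (i), assume $x^n\rightarrow x^{2n}\in F$ for every $x$, and suppose $x^n\rightarrow(y\rightarrow z)\in F$ together with $x^n\rightarrow y\in F$. I will first rewrite $x^n\rightarrow(y\rightarrow z)$ as $y\rightarrow(x^n\rightarrow z)$ using the permutation identity (3). Then I invoke the transitivity-type inequality (15) in the form $x^n\rightarrow y\leq \bigl(y\rightarrow(x^n\rightarrow z)\bigr)\rightarrow\bigl(x^n\rightarrow(x^n\rightarrow z)\bigr)$; since $x^n\rightarrow y\in F$, property (F2) puts the right-hand side in $F$, and then applying the deductive-system property with $y\rightarrow(x^n\rightarrow z)\in F$ yields $x^n\rightarrow(x^n\rightarrow z)\in F$. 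By (2) this element equals $x^{2n}\rightarrow z$, so $x^{2n}\rightarrow z\in F$.

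To conclude, I apply (15) once more, this time as $x^n\rightarrow x^{2n}\leq(x^{2n}\rightarrow z)\rightarrow(x^n\rightarrow z)$, and combine it with the hypothesis $x^n\rightarrow x^{2n}\in F$ (using F2) and with $x^{2n}\rightarrow z\in F$ (using the deductive-system property) to obtain $x^n\rightarrow z\in F$, which is precisely what Definition \ref{defip2}(ii) requires. There is no real obstacle; the only subtlety is keeping track of the two distinct uses of (15)—once to collapse the two hypotheses into a statement about $x^{2n}\rightarrow z$, and once to lift that statement back to $x^n\rightarrow z$ via the single assumed element $x^n\rightarrow x^{2n}$.
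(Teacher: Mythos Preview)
Your proof is correct and follows the same overall strategy as the paper: for (i)$\Rightarrow$(ii) both instantiate the definition at $y=x^n$, $z=x^{2n}$; for (ii)$\Rightarrow$(i) both first establish $x^{2n}\rightarrow z\in F$ and then use (15) with the hypothesis $x^n\rightarrow x^{2n}\in F$ to lift this to $x^n\rightarrow z\in F$.

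The one tactical difference worth noting is how the intermediate fact $x^{2n}\rightarrow z\in F$ is obtained. The paper works multiplicatively: from $x^n\otimes(x^n\rightarrow y)\le y$ and $x^n\otimes(x^n\rightarrow(y\rightarrow z))\le y\rightarrow z$ it tensors everything together to get $\bigl[(x^n\rightarrow(y\rightarrow z))\otimes(x^n\rightarrow y)\bigr]\otimes x^{2n}\le z$, then residuates and uses that $F$ is closed under $\otimes$. You stay entirely on the implicational side, using the permutation (3) to rewrite $x^n\rightarrow(y\rightarrow z)$ as $y\rightarrow(x^n\rightarrow z)$ and then a single application of (15) to combine it with $x^n\rightarrow y$. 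Your route avoids invoking the tensor product and the residuation law explicitly, which makes it marginally more self-contained; the paper's route makes the ``two uses of modus ponens cost $x^{2n}$'' idea more visible. Both are standard and equally valid.
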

\begin{proof}
 $(i)\longrightarrow (ii)$:\ Let  $x \in L$, by Prop. \ref{pro}
  we have: $x^{n}\longrightarrow ( x^{n}\longrightarrow x^{2n})= x^{2n}\longrightarrow x^{2n}=1\in F$
and $x^{n}\longrightarrow  x^{n}=1\in F$. Since $F$ is an n-fold
implicative filter of $L$, we get $x^{n}\longrightarrow x^{2n}\in
F$.\\
 $(ii)\longrightarrow (i)$:\  Let $x, y, z \in
L$ be such that  $x^{n}\longrightarrow (y\longrightarrow z)\in F$
 and $x^{n}\longrightarrow y\in F$. By Prop. \ref{pro} we  have the
 following:
\begin{itemize}
\item[(1)]$x^{n}\otimes [x^{n}\longrightarrow (y\longrightarrow z)]\leq y\longrightarrow
z$.
\item[(2)]$x^{n}\otimes (x^{n}\longrightarrow y)\leq y $.
\item[ (3)]By (1) and (2) we have : $[x^{n}\otimes [x^{n}\longrightarrow (y\longrightarrow
z)]]\otimes [x^{n}\otimes (x^{n}\longrightarrow y)]\leq y\otimes
(y\longrightarrow z)\leq z$.
\item[ (4)] By (3) we have : $([x^{n}\longrightarrow (y\longrightarrow
z)]\otimes (x^{n}\longrightarrow y))\otimes x^{2n}\leq z$.
\item[ (5)] By (4), we have :$([x^{n}\longrightarrow (y\longrightarrow
z)]\otimes (x^{n}\longrightarrow y))\leq x^{2n}\longrightarrow z$.
\item[ (6)]Since $x^{n}\longrightarrow (y\longrightarrow z)\in F$
 and $x^{n}\longrightarrow y\in F$, by the fact that $F$ is a
 filter, we get $[x^{n}\longrightarrow (y\longrightarrow z)]\otimes(x^{n}\longrightarrow y)\in F$
\item[ (7)]By (5),(6) and  the fact that $F$ is a
 filter,  we get  $x^{2n}\longrightarrow z\in F$.
\item[ (8)]  $x^{n}\longrightarrow x^{2n}\leq (x^{2n}\longrightarrow z)\longrightarrow
 (x^{n}\longrightarrow z) $
\item[ (9)]By (7), (8)  and  the fact that  $x^{n}\longrightarrow x^{2n} \in
F$, we obtain $x^{n}\longrightarrow z\in F$.
\end{itemize}
Hence $F$ is an n-fold implicative filter of $L$.
\end{proof}
\begin{prop}\label{ch112} Let $F$ be a filter of $L$. Then for all  $x, y \in
L$, the following conditions are equivalent:
\begin{itemize}

\item[(i)]   $x^{n}\longrightarrow x^{2n}\in F$.
\item[(ii)] If  $x^{n+1}\longrightarrow y\in F$, then
$x^{n}\longrightarrow y\in F$.

\end{itemize}
\end{prop}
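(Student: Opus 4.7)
The plan is to establish the two implications separately, both by direct filter manipulations using the identities of Prop.~\ref{pro}.

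For the forward direction (i) $\Rightarrow$ (ii), I would assume $x^{n}\rightarrow x^{2n}\in F$ together with $x^{n+1}\rightarrow y\in F$ and derive $x^{n}\rightarrow y\in F$. First note that $x^{2n}\le x^{n+1}$ (valid for every $n\ge 1$), so Prop.~\ref{pro}(4) gives $x^{n+1}\rightarrow y\le x^{2n}\rightarrow y$, and upward closure of $F$ yields $x^{2n}\rightarrow y\in F$. Next, invoking the chain inequality $x^{n}\rightarrow x^{2n}\le (x^{2n}\rightarrow y)\rightarrow(x^{n}\rightarrow y)$ from Prop.~\ref{pro}(15) together with the hypothesis $x^{n}\rightarrow x^{2n}\in F$, upward closure of $F$ forces $(x^{2n}\rightarrow y)\rightarrow(x^{n}\rightarrow y)\in F$; the deductive-system form of the filter axiom (Fact~1), applied with $x^{2n}\rightarrow y\in F$, then delivers $x^{n}\rightarrow y\in F$.

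For the converse (ii) $\Rightarrow$ (i), a single application of (ii) only trims the exponent from $n+1$ to $n$, so I would iterate. Define
$$P(k):\ \text{for all } x,y\in L,\ x^{n+k}\rightarrow y\in F\ \Longrightarrow\ x^{n}\rightarrow y\in F,$$
and prove $P(k)$ by induction on $k\ge 1$. The base case $P(1)$ is exactly hypothesis (ii). For the inductive step, assume $P(k)$ and suppose $x^{n+k+1}\rightarrow y\in F$. The currying identity $x^{n+k+1}\rightarrow y=x^{n+1}\rightarrow(x^{k}\rightarrow y)$ from Prop.~\ref{pro}(2) rewrites this as $x^{n+1}\rightarrow(x^{k}\rightarrow y)\in F$; applying (ii) with the new argument $x^{k}\rightarrow y$ in place of $y$ gives $x^{n}\rightarrow(x^{k}\rightarrow y)\in F$, i.e.\ $x^{n+k}\rightarrow y\in F$, and $P(k)$ closes the induction. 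Taking $k=n$ and $y=x^{2n}$, the trivial membership $x^{2n}\rightarrow x^{2n}=1\in F$ yields $x^{n}\rightarrow x^{2n}\in F$, which is condition (i).

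The main obstacle is the asymmetry appearing in (ii) $\Rightarrow$ (i): hypothesis (ii) only reduces the exponent of $x$ by one in a single step, whereas (i) demands that $x^{2n}$ appear on the right-hand side of the implication. The key idea that unlocks the induction is to use the adjunction identity $x^{a+b}\rightarrow y=x^{a}\rightarrow(x^{b}\rightarrow y)$ to absorb excess powers of $x$ into the argument, so that the one-step trimming provided by (ii) becomes iterable; once this observation is in hand, both directions reduce to straightforward filter bookkeeping.
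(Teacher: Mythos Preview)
Your proof is correct. The converse direction $(ii)\Rightarrow(i)$ is essentially the same iteration as the paper's: both repeatedly apply (ii) via the currying identity $x^{a+b}\rightarrow y=x^{a}\rightarrow(x^{b}\rightarrow y)$ to step down from $x^{2n}\rightarrow x^{2n}=1\in F$ to $x^{n}\rightarrow x^{2n}\in F$; you simply package the $n$ repetitions as a formal induction on $k$, whereas the paper writes out the first two steps and then says ``repeating the process $n$ times''.

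The forward direction $(i)\Rightarrow(ii)$ differs. The paper invokes Prop.~\ref{ch111} to conclude that $F$ is an $n$-fold implicative filter, then rewrites $x^{n+1}\rightarrow y=x^{n}\rightarrow(x\rightarrow y)$ and applies the defining property of such filters with the auxiliary premise $x^{n}\rightarrow x=1\in F$. Your argument is more direct and self-contained: from $x^{2n}\le x^{n+1}$ you get $x^{n+1}\rightarrow y\le x^{2n}\rightarrow y\in F$, and then the chain rule $x^{n}\rightarrow x^{2n}\le(x^{2n}\rightarrow y)\rightarrow(x^{n}\rightarrow y)$ plus modus ponens finishes. This avoids the detour through Prop.~\ref{ch111} (whose $(ii)\Rightarrow(i)$ part itself takes nine lines) and makes the proposition independent of the preceding one; the paper's route, by contrast, highlights that condition (i) already encodes the full $n$-fold implicative property.
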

\begin{proof}
$(i)\longrightarrow (ii)$:\ Since $(i)$ holds,  by Prop. \ref{ch111}
$F$ is an n-fold implicative filter of $L$.  On the other
 hand by  Prop. \ref{pro} we have :
  $x^{n+1}\longrightarrow y =x^{n}\longrightarrow (x\longrightarrow
y) \in F$ and $x^{n}\longrightarrow x= 1\in F$, by the fact that $F$
is an n-fold implicative filter of $L$ we get $x^{n}\longrightarrow y\in F$.\\
 $(ii)\longrightarrow (i)$:\ We have :\\
 $x^{n+1}\longrightarrow(x^{n-1}\longrightarrow x^{2n})= x^{2n}\longrightarrow x^{2n}=1\in F$.
From this and the fact that (ii) holds, we  also have : \\
$x^{n}\longrightarrow(x^{n-1}\longrightarrow x^{2n})\in F$.\\ But
$x^{n+1}\longrightarrow(x^{n-2}\longrightarrow
x^{2n})=x^{n}\longrightarrow(x^{n-1}\longrightarrow
x^{2n})\in F$.\\
From this and the fact that (ii) holds, we  also have : \\
$x^{n}\longrightarrow(x^{n-2}\longrightarrow x^{2n})\in F$.\\
By repeating the process n times, we get
$x^{n}\longrightarrow(x^{n-n}\longrightarrow
x^{2n})=x^{n}\longrightarrow x^{2n}\in F$.
\end{proof}
\begin{prop}\label{ch113} Let $F$ be a filter of $L$. Then for all  $x, y, z \in
L$, the following conditions are equivalent:
\begin{itemize}
\item[(i)]$x^{n}\longrightarrow x^{2n}\in F$.
\item[(ii)] If  $x^{n}\longrightarrow ( y
\longrightarrow z)\in F$, then  $(x^{n}\longrightarrow
y)\longrightarrow ( x^{n}\longrightarrow z)\in F$.
\end{itemize}
\end{prop}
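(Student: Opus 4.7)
The easier direction is $(ii)\Rightarrow(i)$. The idea is to specialize the hypothesis to a choice of $y,z$ that forces the antecedent $x^n\to(y\to z)$ to lie in $F$ trivially. Taking $y=x^n$ and $z=x^{2n}$ gives $x^n\to(y\to z)=x^n\to(x^n\to x^{2n})=x^{2n}\to x^{2n}=1\in F$ by (2) and (6); applying $(ii)$ then yields $(x^n\to x^n)\to(x^n\to x^{2n})=1\to(x^n\to x^{2n})=x^n\to x^{2n}\in F$ by (6).

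For $(i)\Rightarrow(ii)$, the plan is to combine the inequality $x^n\otimes(x^n\to y)\leq y$ (Prop.~\ref{pro}, line (5)) with residuation and with the assumed membership $x^n\to x^{2n}\in F$. Starting from $x^n\to(y\to z)\in F$, I first move $x^n\otimes(x^n\to y)$ into the position of $y$. Concretely, since $x^n\otimes(x^n\to y)\leq y$, monotonicity (8) gives $y\to z\leq(x^n\otimes(x^n\to y))\to z$, and applying (8) once more with $x^n\to(-)$ shows that $x^n\to(y\to z)$ is bounded above by $x^n\to\bigl((x^n\otimes(x^n\to y))\to z\bigr)$. Using (2) together with the commutativity/associativity of $\otimes$, this upper bound rewrites as $(x^n\to y)\to(x^{2n}\to z)$. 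Filter closedness then yields the intermediate membership
\[
(x^n\to y)\to(x^{2n}\to z)\in F.
\]

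It remains to replace $x^{2n}$ by $x^n$ on the right, which is exactly where the hypothesis $x^n\to x^{2n}\in F$ is consumed. By (15), $x^n\to x^{2n}\leq(x^{2n}\to z)\to(x^n\to z)$, and by (16) this in turn is bounded above by $\bigl((x^n\to y)\to(x^{2n}\to z)\bigr)\to\bigl((x^n\to y)\to(x^n\to z)\bigr)$. Thus $x^n\to x^{2n}\in F$ propagates, via two successive filter-closure steps (modus ponens on this chain of implications, using also the intermediate membership established above), to $(x^n\to y)\to(x^n\to z)\in F$, as desired.

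The main obstacle is the bookkeeping in the $(i)\Rightarrow(ii)$ direction: one must carefully track which side of each residuation inequality the various $x^n$ factors sit on, and be disciplined about using (15) and (16) in the right order to upgrade $x^{2n}\to z$ into $x^n\to z$ without introducing extra unwanted premises. Everything else reduces to routine applications of Prop.~\ref{pro}.
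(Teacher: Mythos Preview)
Your proposal is correct and follows essentially the same approach as the paper. Both directions match: for $(ii)\Rightarrow(i)$ you and the paper make the identical specialization $y=x^n$, $z=x^{2n}$; for $(i)\Rightarrow(ii)$ you and the paper both first derive the intermediate membership $(x^n\to y)\to(x^{2n}\to z)\in F$ (the paper writes it in the equivalent form $x^{2n}\to[(x^n\to y)\to z]$ via (2)--(3)) and then use $x^n\to x^{2n}\in F$ together with (15) to upgrade $x^{2n}$ to $x^n$. The only cosmetic difference is that you reach the intermediate step via $x^n\otimes(x^n\to y)\le y$ and monotonicity, while the paper reaches it via the inequality $y\to z\le(x^n\to y)\to(x^n\to z)$ followed by a rewrite; the underlying computation is the same.
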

\begin{proof}
$(i)\longrightarrow (ii)$:\ Assume that $x^{n}\longrightarrow ( y
\longrightarrow z)\in F$. By Prop. \ref{pro} we  have the
 following equations:
\begin{itemize}
\item[(1)] $ y\longrightarrow z\leq (x^{n}\longrightarrow y)\longrightarrow (x^{n}\longrightarrow z)$.
\item[(2)] $ x^{n}\longrightarrow(y\longrightarrow z)\leq x^{n}\longrightarrow[(x^{n}\longrightarrow y)\longrightarrow (x^{n}\longrightarrow z)]$.
\item[ (3)]$x^{n}\longrightarrow[(x^{n}\longrightarrow y)\longrightarrow (x^{n}\longrightarrow z)]=
x^{n}\longrightarrow[x^{n}\longrightarrow ((x^{n}\longrightarrow
y)\longrightarrow z)]=x^{2n}\longrightarrow[ (x^{n}\longrightarrow
y)\longrightarrow z]$.
\item[ (4)] By (2) and (3) we have : $ x^{n}\longrightarrow(y\longrightarrow
z)\leq x^{2n}\longrightarrow[ (x^{n}\longrightarrow
y)\longrightarrow z]$.
\item[ (5)]Since $F$ is a filter, by (4) and the fact that  $x^{n}\longrightarrow ( y
\longrightarrow z)\in F$, we have : $x^{2n}\longrightarrow[
(x^{n}\longrightarrow y)\longrightarrow z]\in F$
\item[ (6)]$x^{2n}\longrightarrow[
(x^{n}\longrightarrow y)\longrightarrow z]\leq(x^{n}\longrightarrow
x^{2n})\longrightarrow(x^{n}\longrightarrow[ (x^{n}\longrightarrow
y)\longrightarrow z])=(x^{n}\longrightarrow x^{2n})\longrightarrow
((x^{n}\longrightarrow y)\longrightarrow[ (x^{n}\longrightarrow
z])$.
\item[ (7)]Since $F$ is a filter, by (6) and the fact that  $x^{2n}\longrightarrow[
(x^{n}\longrightarrow y)\longrightarrow z]\in F$,  we have :
$(x^{n}\longrightarrow x^{2n})\longrightarrow ((x^{n}\longrightarrow
y)\longrightarrow[ (x^{n}\longrightarrow z])\in F$.
\item[(8)]Since $F$ is a filter, by (7) and the fact that
 $x^{n}\longrightarrow x^{2n}\in F $, we obtain $(x^{n}\longrightarrow
y)\longrightarrow ( x^{n}\longrightarrow z)\in F$.\\
\end{itemize}
$(ii)\longrightarrow (i)$:\ Since
$x^{n}\longrightarrow(x^{n}\longrightarrow x^{2n})=
x^{2n}\longrightarrow x^{2n}=1\in F$, by (ii) we have :
$(x^{n}\longrightarrow x^{n})\longrightarrow(x^{n}\longrightarrow
x^{2n})\in F$, hence $x^{n}\longrightarrow x^{2n}\in F$.
\end{proof}

 By Prop. \ref{ch111},   Prop. \ref{ch112} and   Prop. \ref{ch113},
 we have the following result:
\begin{prop}\label{ch1} Let $F$ be a filter of $L$. Then for all  $x, y, z \in
L$, the following conditions are equivalent:
\begin{itemize}
\item[(i)]$F$ is an n-fold implicative filter of $L$.
\item[(ii)]   $x^{n}\longrightarrow x^{2n}\in F$.
\item[(iii)] If  $x^{n+1}\longrightarrow y\in F$, then
$x^{n}\longrightarrow y\in F$.
\item[(iv)]If  $x^{n}\longrightarrow ( y \longrightarrow z)\in
F$, then  $(x^{n}\longrightarrow y)\longrightarrow
(x^{n}\longrightarrow z)\in F$.
\end{itemize}
\end{prop}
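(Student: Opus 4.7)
The plan is essentially to observe that this proposition is a bookkeeping consolidation of the three equivalences already established. Condition (ii) plays the role of a hub: Proposition \ref{ch111} gives the equivalence (i)$\Leftrightarrow$(ii), Proposition \ref{ch112} gives (ii)$\Leftrightarrow$(iii), and Proposition \ref{ch113} gives (ii)$\Leftrightarrow$(iv). So I would write the proof in one line, invoking the three earlier propositions and concluding by transitivity of the biconditional.

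More concretely, I would first state: assume $F$ is a filter of $L$. Then apply Proposition \ref{ch111} to obtain (i)$\Leftrightarrow$(ii); next apply Proposition \ref{ch112} to obtain (ii)$\Leftrightarrow$(iii); finally apply Proposition \ref{ch113} to obtain (ii)$\Leftrightarrow$(iv). Chaining these three biconditionals through the common hub (ii) yields (i)$\Leftrightarrow$(ii)$\Leftrightarrow$(iii)$\Leftrightarrow$(iv), which is the desired statement. No new residuated-lattice identity from Proposition \ref{pro} is needed, because the arithmetic inside each equivalence has already been carried out in the three preceding propositions.

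There is no genuine obstacle: the work done in Propositions \ref{ch111}, \ref{ch112}, and \ref{ch113} was precisely each of the pairwise equivalences with (ii). The only small care required is to make sure the quantifiers match up, since conditions (ii), (iii), (iv) all contain implicit universal quantifiers over $x$ (and $y$, $z$), and conditions (i)--(iv) are all phrased ``for every $x, y, z \in L$,'' so the chain of biconditionals is valid element-wise and hence globally. Therefore I would phrase the proof as a single sentence: \emph{By Propositions \ref{ch111}, \ref{ch112}, and \ref{ch113}, the conditions (i), (ii), (iii), (iv) are pairwise equivalent through (ii), which proves the claim.}
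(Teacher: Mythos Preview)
Your proposal is correct and matches the paper's own treatment exactly: the paper simply states that Proposition~\ref{ch1} follows from Propositions~\ref{ch111}, \ref{ch112}, and \ref{ch113}, without writing out any further argument. Your observation that (ii) serves as the common hub linking the three pairwise equivalences is precisely the structure of those three preceding propositions.
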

\begin{prop}\label{ch2}
 If a filter $F$ is an n-fold implicative filter,
 then $F$ is an (n+1)-fold implicative filter.
\end{prop}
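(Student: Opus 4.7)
The plan is to use characterization (iii) of Proposition~\ref{ch1}, which provides the cleanest reformulation of the n-fold implicative property: $F$ is n-fold implicative if and only if for all $x,y\in L$, $x^{n+1}\rightarrow y\in F$ implies $x^{n}\rightarrow y\in F$. Applied to the integer $n+1$, the condition to be proved becomes: for all $x,y\in L$, $x^{n+2}\rightarrow y\in F$ implies $x^{n+1}\rightarrow y\in F$.

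To verify this implication, I would suppose $x^{n+2}\rightarrow y\in F$ and perform a reassociation using equation~(2) of Proposition~\ref{pro}, namely $(u\otimes v)\rightarrow w = u\rightarrow(v\rightarrow w)$. Writing $x^{n+2}=x^{n+1}\otimes x$ gives $x^{n+2}\rightarrow y = x^{n+1}\rightarrow(x\rightarrow y)$, so that $x^{n+1}\rightarrow(x\rightarrow y)\in F$. Now I would invoke the n-fold implicative hypothesis, in the form of Proposition~\ref{ch1}(iii) applied to the element $w:=x\rightarrow y$, to conclude $x^{n}\rightarrow(x\rightarrow y)\in F$. A second application of~(2) rewrites this as $x^{n+1}\rightarrow y\in F$, which is what we wanted.

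The whole argument reduces to a single residuation manipulation once the right characterization is chosen. The main obstacle is simply recognizing that characterization~(iii) is the correct lens: it ``peels off'' exactly one factor of $x$ from the left-hand exponent, which is precisely the shift needed to move from $n+2$ to $n+1$. Trying to prove the result directly from characterization~(ii) (that $x^{n}\rightarrow x^{2n}\in F$) would be noticeably more awkward, since substituting specific values in $y^{n}\rightarrow y^{2n}$ does not naturally produce the exponents $n+1$ and $2n+2$ simultaneously.
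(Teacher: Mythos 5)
Your proof is correct and is essentially the paper's own argument: both use characterization (iii) of Proposition~\ref{ch1}, rewrite $x^{n+2}\rightarrow y$ as $x^{n+1}\rightarrow(x\rightarrow y)$ via Proposition~\ref{pro}(2), apply the n-fold hypothesis to peel one exponent, and reassociate back to get $x^{n+1}\rightarrow y\in F$. No substantive difference from the paper's proof.
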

\begin{proof}Let  $F$ be  a filter.
Assume that $F$ is an n-fold implicative filter. Let  $x, y \in L$
such that  $x^{n+2}\longrightarrow y\in F$, by Prop.  \ref{pro},
$x^{n+1}\longrightarrow(x\longrightarrow y)=x^{n+2}\longrightarrow
y\in F$. Since  $F$ is an n-fold implicative filter, apply Prop.
\ref{ch1}(iii),\\ we obtain $x^{n}\longrightarrow(x\longrightarrow
y)\in F$. Hence $x^{n+1}\longrightarrow y\in F$ and by Prop.
\ref{ch1},  $F$ is an (n+1)-fold implicative filter.
 \end{proof}
By the following example, we show that the converse of Prop.
\ref{ch2}  is not true in general.
\begin{ex}\label{exim3} \cite{B13}

Let $ L=\{ 0, a, b,   1 \} $ be a lattice such that $0 <a<b<1$.
  Define the operations $\otimes $ and $\rightarrow $ by the two tables. Then
  $(L, \wedge, \vee, \otimes, \rightarrow, 0, 1)$ is a residuated
  lattice.
\begin{center}
\begin{tabular}{|c|c|c|c|c|}
\hline
 $\otimes$ & $0$  & $a$ & $b$&  $1$ \\
  \hline
  $0$ & $0$ & $0$ & $0$ & $0$ \\
  $a$ & $0$ & $0$ & $0$ & $a$  \\
  $b$ & $0$ & $0$ & $a$ & $b$   \\
  $1$ & $0$ & $a$ & $b$ & $1$    \\
  \hline
\end{tabular}
\hspace{2 cm}
\begin{tabular}{|c|c|c|c|c|}
  \hline
 $\longrightarrow$ & $0$  & $a$ & $b$&  $1$ \\
  \hline
  $0$ & $1$ & $1$ & $1$ & $1$  \\
  $a$ & $b$ & $1$ & $1$ & $1$  \\
  $b$ & $a$ & $b$ & $1$ & $1$ \\
  $1$ & $0$ & $a$ & $b$ & $1$\\
 \hline
\end{tabular}
\end{center}
 $\{1\}$ is an 3-fold implicative filter but $\{1\}$ is not an 2-fold implicative
 filter, since $b^{2}\longrightarrow a \in\{1\}$ but $b^{1}\longrightarrow a = b \notin\{1\}$
\end{ex}
\begin{prop}\label{ch11}
n-fold implicative filters are filters.
\end{prop}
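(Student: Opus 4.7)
The plan is to verify that $F$ satisfies the deductive-system characterization of a filter recalled in Fact 1, namely that $1\in F$ and that $x\in F$ together with $x\longrightarrow y\in F$ force $y\in F$. The first condition is built into Definition \ref{defip2}, so only the implication-closure property requires argument.

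To obtain that property I would simply specialize the defining condition of an n-fold implicative filter to $x=1$. By Prop.\ \ref{pro}(6), $1\longrightarrow u=u$ for every $u\in L$, and since $1$ is the unit of $\otimes$ we also have $1^{n}=1$. Consequently, for any $y,z\in L$, the hypothesis
\[
1^{n}\longrightarrow (y\longrightarrow z)\in F\quad\text{and}\quad 1^{n}\longrightarrow y\in F
\]
collapses to $y\longrightarrow z\in F$ and $y\in F$, and the conclusion $1^{n}\longrightarrow z\in F$ collapses to $z\in F$. Hence n-fold implicativity at $x=1$ is literally the deductive-system axiom.

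Combining this with $1\in F$ and invoking the equivalence of filters and deductive systems (Fact 1, established in \cite{B4}), $F$ is a filter of $L$. I do not foresee any genuine obstacle: the proof is a one-line specialization, and the only thing worth double-checking is that the properties $1^{n}=1$ and $1\longrightarrow u=u$ used in the reduction are indeed available from Prop.\ \ref{pro}, which they are.
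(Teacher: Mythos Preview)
Your proposal is correct and follows essentially the same approach as the paper: the paper also specializes the n-fold implicative condition to $x=1$, uses $1^{n}=1$ and $1\longrightarrow u=u$ to reduce the defining implication to the deductive-system rule, and then appeals to Fact~1.
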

\begin{proof}
Suppose that $F$ is an n-fold implicative filter of $L$. Let
 $z, y \in L$ such that  $y, y\longrightarrow z\in F$.
  We have  $1^{n}\longrightarrow y, 1^{n}\longrightarrow ( y\longrightarrow z)\in
  F$, this implies $z=1^{n}\longrightarrow z\in F$. Hence $F$ is a  deductive system
 of  $L$ and the thesis follows from the fact1.
\end{proof}

By the following example, we show that the converse of Prop.
\ref{ch11} is not true in general.

\begin{ex}
Let $L$ be a residuated lattice from Example \ref{exim3}. $\{1\}$ is
a filter but $\{1\}$ is not an 2-fold implicative
 filter, since $b^{2}\longrightarrow a \in\{1\}$ but $b^{1}\longrightarrow a = b
 \notin\{1\}$.
\end{ex}
Using Prop.\ref{ch1}, it is easy to show the following results:

\begin{cor}\label{ch3}
  If $L$ is an n-fold implicative residuated lattice then,  the
concepts of n-fold implicative filters and filters coincide.
\end{cor}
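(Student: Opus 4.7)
The plan is to use Proposition \ref{ch11} for one direction and Proposition \ref{ch1} for the other. Since Proposition \ref{ch11} already shows that every n-fold implicative filter is a filter, what remains is to prove that, under the assumption $x^{n+1}=x^n$ for all $x\in L$, every filter of $L$ is an n-fold implicative filter.

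To do this I would appeal to the characterization in Proposition \ref{ch1}(ii): it suffices to show that $x^{n}\longrightarrow x^{2n}\in F$ for every $x\in L$. I would first upgrade the defining identity $x^{n+1}=x^n$ to $x^{n+k}=x^n$ for all $k\geq 0$, by a short induction on $k$: the base case $k=0,1$ is immediate, and the inductive step uses associativity of $\otimes$ together with $x^{n+k+1}=x^{n+k}\otimes x=x^n\otimes x=x^{n+1}=x^n$. Taking $k=n$ yields $x^{2n}=x^n$.

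Consequently, $x^{n}\longrightarrow x^{2n}=x^{n}\longrightarrow x^{n}=1$ by Proposition \ref{pro}, and since $1\in F$ for every filter, the condition of Proposition \ref{ch1}(ii) is satisfied vacuously. Hence $F$ is an n-fold implicative filter, which combined with Proposition \ref{ch11} proves that the two concepts coincide.

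There is no real obstacle here; the whole argument is a direct chase through the characterization in Proposition \ref{ch1}, and the essential observation is the elementary fact that $x^{n+1}=x^{n}$ forces $x^{2n}=x^n$. The only point worth being careful about is not invoking any property beyond what is already stated in Proposition \ref{pro} and the definition of an n-fold implicative residuated lattice.
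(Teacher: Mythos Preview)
Your proposal is correct and follows exactly the approach the paper indicates: the paper simply remarks that the corollary follows from Proposition~\ref{ch1}, and your argument fills in the obvious details by showing $x^{n+1}=x^{n}$ forces $x^{2n}=x^{n}$, whence $x^{n}\longrightarrow x^{2n}=1\in F$ and Proposition~\ref{ch1}(ii) applies.
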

\begin{thm}\label{ch4}
Let $F_{1}$ and $F_{2}$ two filters of $L$  such that
$F_{1}\subseteq F_{2}$. If   $F_{1}$ is an n-fold implicative
filter, then  $F_{2}$ is an n-fold implicative filter.
\end{thm}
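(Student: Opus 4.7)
The plan is to reduce the claim to the characterization already established in Proposition \ref{ch1}. Recall that (ii) of that proposition asserts that, for a filter $F$ of $L$, being an $n$-fold implicative filter is equivalent to the single condition $x^{n}\longrightarrow x^{2n}\in F$ for every $x\in L$. This reformulation strips away the quantifier over pairs $(y,z)$ in Definition \ref{defip2}, leaving a pointwise membership condition that behaves well under enlargement of $F$.

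First, I would invoke Proposition \ref{ch1} applied to $F_{1}$: since $F_{1}$ is an $n$-fold implicative filter, $x^{n}\longrightarrow x^{2n}\in F_{1}$ for every $x\in L$. Next, I would use the hypothesis $F_{1}\subseteq F_{2}$ to conclude immediately that $x^{n}\longrightarrow x^{2n}\in F_{2}$ for every $x\in L$. Finally, since $F_{2}$ is assumed to be a filter of $L$, Proposition \ref{ch1} applies in the reverse direction and yields that $F_{2}$ is an $n$-fold implicative filter.

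There is no real obstacle here; the entire force of the theorem comes from the characterization already in hand. The only thing worth remarking is why a direct proof straight from Definition \ref{defip2} would be awkward: given $x^{n}\longrightarrow(y\longrightarrow z)\in F_{2}$ and $x^{n}\longrightarrow y\in F_{2}$, one cannot pull these hypotheses back into $F_{1}$ to use the $n$-fold implicative property of $F_{1}$, because inclusion runs the wrong way. Passing through condition (ii) of Proposition \ref{ch1} sidesteps this because that condition is universal in $x$ and is a statement about $F_{1}$ alone, which then transfers upward to $F_{2}$ by monotonicity of set inclusion.
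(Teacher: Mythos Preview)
Your proof is correct and matches the paper's approach exactly: the paper states that Theorem \ref{ch4} follows directly from Proposition \ref{ch1} without giving further details, and your argument via condition (ii) of that proposition is precisely the intended one-line justification.
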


The following theorem gives the relation between  n-fold implicative
residuated lattice and n-fold implicative filter.

\begin{prop}\label{ch5} Let $F$ be a filter of $L$. The following conditions are equivalent:
\begin{itemize}
\item[(i)]$L$ is an n-fold implicative residuated lattice.
\item[(ii)]   Every filter of $L$ is  an n-fold implicative filter of
$L$.
\item[(iii)] \{1\}   is  an n-fold implicative filter of
$L$.
\item[(iv)] $x^{n}= x^{2n}$ for all
$x\in L$.
\end{itemize}
\end{prop}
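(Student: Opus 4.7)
The plan is to run the implications in the cycle $(i)\Rightarrow(iv)\Rightarrow(ii)\Rightarrow(iii)\Rightarrow(i)$, using Proposition \ref{ch1} as the bridge between the ``algebraic'' conditions $(i),(iv)$ and the ``filter'' conditions $(ii),(iii)$.

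For $(i)\Rightarrow(iv)$, I would argue by a short induction that $x^{n+k}=x^{n}$ for every $k\geq 0$: the base $k=0$ is trivial, and if $x^{n+k}=x^{n}$ then $x^{n+k+1}=x^{n+k}\otimes x=x^{n}\otimes x=x^{n+1}=x^{n}$ by hypothesis $(i)$. Taking $k=n$ yields $x^{2n}=x^{n}$.

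For $(iv)\Rightarrow(ii)$, given any filter $F$ and any $x\in L$, condition $(iv)$ gives $x^{n}\longrightarrow x^{2n}=x^{n}\longrightarrow x^{n}=1\in F$ using $x\to x=1$ from Proposition \ref{pro}. By Proposition \ref{ch1}(ii), $F$ is an n-fold implicative filter. The implication $(ii)\Rightarrow(iii)$ is immediate since $\{1\}$ is always a filter of $L$.

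The only slightly non-trivial step is $(iii)\Rightarrow(i)$. Assuming $\{1\}$ is an n-fold implicative filter, Proposition \ref{ch1}(ii) gives $x^{n}\longrightarrow x^{2n}\in\{1\}$, i.e.\ $x^{n}\longrightarrow x^{2n}=1$, and hence $x^{n}\leq x^{2n}$ by Proposition \ref{pro}. On the other hand, iterating $x\otimes y\leq x\wedge y\leq x$ from Proposition \ref{pro} shows that $x^{a}\leq x^{b}$ whenever $a\geq b$; in particular, since $n\geq 1$ yields $2n\geq n+1\geq n$, we get $x^{2n}\leq x^{n+1}\leq x^{n}$. Combining these, $x^{n}\leq x^{2n}\leq x^{n+1}\leq x^{n}$, so $x^{n+1}=x^{n}$ for every $x\in L$, which is exactly $(i)$. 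No step is really an obstacle; the only thing to watch is invoking Proposition \ref{ch1} in the correct direction and handling the small index inequality $2n\geq n+1$ for $n\geq 1$.
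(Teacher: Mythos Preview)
Your proof is correct and follows essentially the same approach as the paper: both arguments use Proposition~\ref{ch1} (the characterization $x^{n}\longrightarrow x^{2n}\in F$) as the bridge between the algebraic conditions $(i),(iv)$ and the filter conditions $(ii),(iii)$. The only cosmetic difference is the order of the cycle: the paper runs $(i)\Rightarrow(ii)\Rightarrow(iii)\Rightarrow(iv)\Rightarrow(i)$ (invoking Corollary~\ref{ch3} for $(i)\Rightarrow(ii)$ and applying the n-fold implicative filter definition to $\{1\}$ for $(iv)\Rightarrow(i)$), whereas your squeeze $x^{n}\leq x^{2n}\leq x^{n+1}\leq x^{n}$ for $(iii)\Rightarrow(i)$ is a slightly more direct route to the same conclusion.
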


\begin{proof}
$(i)\longrightarrow (ii)$ :  \ follows from Corollary. \ref{ch3} \\
$(ii)\longrightarrow (iii)$ :  \ follows from the fact that \{1\}
is  a filter of $L$.  \\
$(iii)\longrightarrow (iv)$ :  \  Assume that \{1\}   is  an
n-fold implicative filter of $L$. From Prop \ref{ch1}, we have
$x^{n}\longrightarrow x^{2n} = 1 $ for all $x\in L$. So $x^{n}\leq
x^{2n} $ for all $x\in L$. Since $x^{2n}\leq x^{n} $ for all $x\in
L$, we obtain $x^{n}= x^{2n}$ for all $x\in L$.\\
$(iv)\longrightarrow (i)$ :  \ If $x^{n}= x^{2n}$ for all $x\in
L$, we have $x^{n}\longrightarrow x^{2n} = 1 \in \{1\}$ for all
$x\in L$, by Prop. \ref{ch1},  $\{1\}$   is  an n-fold implicative
filter of $L$. Since $x^{n}\longrightarrow (x^{n}\longrightarrow
x^{n+1}) = 1 \in \{1\}$ and $x^{n}\longrightarrow x^{n} = 1 \in
\{1\}$, we get $x^{n}\longrightarrow x^{n+1} \in \{1\}$, that is
$x^{n+1}=x^{n}$ for all $x\in L$.
\end{proof}
\begin{cor}\label{ch6}
 A filter $F$ of  a residuated lattice $L$ is  an n-fold  implicative
 filter if and only if $L/F$  is an n-fold implicative residuated lattice.
\end{cor}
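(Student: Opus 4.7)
The plan is to reduce both sides of the equivalence to the single condition $x^{n}\longrightarrow x^{2n}\in F$ for all $x\in L$, using the two main tools already in hand: the characterization in Proposition \ref{ch1} (which says that being n-fold implicative is equivalent to $x^{n}\longrightarrow x^{2n}\in F$), and Proposition \ref{ch5} (which reduces the n-fold implicative property of a residuated lattice to $\{1\}$ being an n-fold implicative filter, equivalently $x^{n}=x^{2n}$).

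First I would recall the explicit description of the quotient from the definition preceding Proposition \ref{max}: the operations are computed representative-wise, so $(x/F)^{k}=x^{k}/F$ for every $k\geq 1$, and $a/F=1/F$ in $L/F$ if and only if $a\equiv_{F} 1$, which by the congruence definition and the fact that $a\longrightarrow 1=1\in F$ reduces to $a\in F$. Applying this with $a=x^{n}\longrightarrow x^{2n}$ gives the translation
\begin{equation*}
(x/F)^{n}\longrightarrow (x/F)^{2n}=1/F \text{ in } L/F \iff x^{n}\longrightarrow x^{2n}\in F \text{ in } L.
\end{equation*}

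For the forward direction, assume $F$ is an n-fold implicative filter of $L$. By Proposition \ref{ch1}, $x^{n}\longrightarrow x^{2n}\in F$ for all $x\in L$. By the translation above, $(x/F)^{n}\longrightarrow (x/F)^{2n}=1/F$ in $L/F$ for every $x/F$. Hence $\{1/F\}$ satisfies condition (ii) of Proposition \ref{ch1} in $L/F$, so $\{1/F\}$ is an n-fold implicative filter of $L/F$; then Proposition \ref{ch5} ((iii)$\Rightarrow$(i)) yields that $L/F$ is an n-fold implicative residuated lattice.

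For the converse, if $L/F$ is n-fold implicative, then Proposition \ref{ch5} ((i)$\Rightarrow$(iv)) gives $(x/F)^{n}=(x/F)^{2n}$ for every $x/F\in L/F$, hence $(x/F)^{n}\longrightarrow (x/F)^{2n}=1/F$. Running the translation backwards, $x^{n}\longrightarrow x^{2n}\in F$ for every $x\in L$, and Proposition \ref{ch1} ((ii)$\Rightarrow$(i)) concludes that $F$ is an n-fold implicative filter of $L$. The only subtle point, and the single step worth double-checking, is the representative-wise computation $(x/F)^{k}=x^{k}/F$ together with $a/F=1/F\iff a\in F$; everything else is just a citation of the two propositions already proved.
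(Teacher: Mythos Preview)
Your proof is correct and follows essentially the same approach as the paper: both reduce the equivalence to the condition $x^{n}\longrightarrow x^{2n}\in F$ via Proposition~\ref{ch1}, use the representative-wise computation of the quotient operations, and invoke Proposition~\ref{ch5} to characterize the n-fold implicative property of $L/F$. The only cosmetic difference is that in the forward direction the paper passes through condition (iv) of Proposition~\ref{ch5} (the equality $(x/F)^{n}=(x/F)^{2n}$) while you pass through condition (iii) (that $\{1/F\}$ is an n-fold implicative filter); these are interchangeable since Proposition~\ref{ch5} already proves their equivalence.
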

\begin{proof}
 Let  $F$ be a filter.\\ Suppose that $F$  is  an n-fold  implicative
 filter. By Prop. \ref{ch1}(ii),  we have $x^{n}\longrightarrow x^{2n}\in F$ for all
$x\in L$, that is $(x^{n}\longrightarrow x^{2n})/F= 1/F$ for all
$x\in L$. So $ (x/F)^{n}\longrightarrow (x/F)^{2n}=
(x^{n}/F)\longrightarrow (x^{2n}/F)= (x^{n}\longrightarrow
x^{2n})/F=1/F$ for all $x/F\in L/F$, by Prop. \ref{ch5}(iv), $L/F$
is an n-fold implicative residuated lattice.\\ Suppose conversely
that $L/F$  is an n-fold implicative residuated lattice.  By Prop.
\ref{ch5}(iv), we get $ (x/F)^{n}= (x/F)^{2n}$ for all $x/F\in
L/F$ or equivalently $ (x^{n}/F)= (x^{2n}/F)$ for all $x\in L$.
That is $(x^{n}\longrightarrow x^{2n})/F=1/F$ for all $x\in L$.
Hence $x^{n}\longrightarrow x^{2n}\in F$ for all $x\in L$, we
obtain the result by apply Prop. \ref{ch1}(ii).
\end{proof}
   By (3)\cite{B4} and Corollary  \ref{ch6}, we have the following
   result.
\begin{cor}\label{ch7}
 A filter $F$ of  a residuated lattice $L$ is  an 1-fold  implicative
 filter if and only if $L/F$  is a Heyting algebra. As a
 consequence, it is easy to observe that, a residuated lattice $L$   is a Heyting algebra if and only if
$\{1\}$ is  an 1-fold  implicative  filter of $L$ if and only if
$L$   is an 1-fold  implicative residuated lattice.
\end{cor}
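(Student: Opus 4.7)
My plan is to derive this as a direct specialization of Corollary~\ref{ch6} to the case $n=1$, combined with the well-known characterization of Heyting algebras among residuated lattices as those in which $\otimes$ coincides with $\wedge$ (equivalently, $\otimes$ is idempotent). This is the content of the fact ``(3)[B4]'' cited just before the statement, so I will invoke it as a black box.

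First, I would apply Corollary~\ref{ch6} with $n=1$: a filter $F$ is a $1$-fold implicative filter of $L$ if and only if $L/F$ is a $1$-fold implicative residuated lattice. Next, using Proposition~\ref{ch5}(iv), $L/F$ is $1$-fold implicative exactly when $(x/F)^{1}=(x/F)^{2}$ for every $x/F\in L/F$, i.e.\ when $\otimes$ is idempotent on $L/F$. Combining this with the cited characterization that a residuated lattice is a Heyting algebra precisely when $\otimes$ is idempotent (so that $\otimes=\wedge$), I conclude that $L/F$ is a Heyting algebra if and only if $F$ is a $1$-fold implicative filter of $L$. This establishes the main equivalence.

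For the ``consequence'', I would take $F=\{1\}$. Since the congruence $\equiv_{\{1\}}$ is the equality relation, the quotient $L/\{1\}$ is isomorphic to $L$. Applying the first part of the corollary then gives: $L$ is a Heyting algebra iff $\{1\}$ is a $1$-fold implicative filter of $L$. To close the last equivalence with ``$L$ is a $1$-fold implicative residuated lattice'', I would invoke Proposition~\ref{ch5}: there, conditions (i) and (iii) are already shown equivalent, so $\{1\}$ being a $1$-fold implicative filter is the same as $L$ being a $1$-fold implicative residuated lattice.

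No step looks genuinely difficult: the whole argument is an assembly of Corollary~\ref{ch6}, Proposition~\ref{ch5}, and the cited fact from \cite{B4}. The only thing that might need care is making sure the reader sees that the residuated-lattice identity $x^{2}=x$ for all $x$ is exactly the condition forcing $\otimes=\wedge$ (using $x\otimes y\le x\wedge y$ from Proposition~\ref{pro} together with $x\wedge y=(x\wedge y)^{2}\le x\otimes y$), so I would include a one-line reminder of this equivalence when invoking the Heyting characterization.
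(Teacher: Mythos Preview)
Your proposal is correct and follows exactly the approach the paper indicates: the paper states the corollary without a separate proof, simply citing ``(3)\cite{B4} and Corollary~\ref{ch6}'' as its justification, and your argument is precisely the unpacking of that citation (with Proposition~\ref{ch5} serving as the bridge between the idempotency condition and the $1$-fold implicative notion). Your added remark that $x^{2}=x$ forces $\otimes=\wedge$ via $x\wedge y=(x\wedge y)^{2}\le x\otimes y\le x\wedge y$ is a helpful clarification of what the reference to \cite{B4} is doing.
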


\section{N-FOLD POSITIVE IMPLICATIVE FILTERS OF RESIDUATED
LATTICES}

\begin{defi}
$F$  is an n-fold positive implicative filter  if it satisfies the
following conditions:
\begin{itemize}
\item[(i)]$1\in F$
\item[(ii)]For all  $x, y, z \in L$,
 if $ x \longrightarrow ((y^{n}\longrightarrow z)\longrightarrow y)\in F$
 and $ x\in F$, then $ y\in F$.
\end{itemize}
In particular 1-fold positive implicative filters are positive
implicative filters.\cite{B6}
\end{defi}

\begin{ex}Let $n\geq  1$.
Let  $L$ be a residuated lattice from Example \ref{expp}. Simple
computations proves that $ F_{3}= \{1,a,b,c\}$ is an  n-fold
positive implicative filter.
\end{ex}

\begin{prop}\label{propo1}
Every  n-fold positive implicative filter is a filter.
\end{prop}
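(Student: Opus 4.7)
The plan is to show that $F$ is a deductive system (containing $1$ and closed under modus ponens) and then invoke Fact 1, which states that deductive systems and filters coincide in a residuated lattice.

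The condition $1\in F$ is immediate from part (i) of the definition, so the real content is the closure under modus ponens: given $x\in F$ and $x\longrightarrow y\in F$, deduce $y\in F$. The key trick is to choose the auxiliary variable $z$ in the n-fold positive implicative condition so that the complicated expression $(y^{n}\longrightarrow z)\longrightarrow y$ collapses to $y$ itself. The natural choice is $z=1$: by Prop.~\ref{pro}(6) we have $y^{n}\longrightarrow 1=1$, and $1\longrightarrow y=y$, so $(y^{n}\longrightarrow 1)\longrightarrow y=y$.

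With that substitution in hand, the hypothesis of the n-fold positive implicative condition reads $x\longrightarrow y\in F$ and $x\in F$, both of which are assumed, and the conclusion is exactly $y\in F$. No further computation is needed.

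There is no real obstacle; the only thing to verify carefully is that the two simplifications $y^{n}\longrightarrow 1=1$ and $1\longrightarrow y=y$ are justified by Prop.~\ref{pro}(6), and that the quantification ``for all $x,y,z\in L$'' in the definition permits the choice $z=1$. Once this is observed, the argument is a one-line application of the definition, after which Fact 1 promotes the deductive system to a filter.
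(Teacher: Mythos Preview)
Your proof is correct and follows essentially the same route as the paper: both set $z=1$ in condition (ii) of the definition so that $(y^{n}\longrightarrow 1)\longrightarrow y$ collapses to $y$, reducing the n-fold positive implicative condition to modus ponens, and then use Fact~1 to identify the resulting deductive system with a filter. The paper's argument is just a terser version of yours.
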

\begin{proof}
Let   $F$  be an n-fold positive implicative filter of $L$, it is
clear that $ 1\in F$.  Since for any $ y\in F$, $
y^{n}\longrightarrow 1=1$, by setting $ z=1$ in the definition of
n-fold positive implicative filter, we obtain the result.
\end{proof}

 The following Example shows that  filters  are not  n-fold
positive implicative filters in general.
\begin{ex}\label{ex2}
Let  $L$ be a residuated lattice from Example \ref{expp} and
$n\geq 1$. $F_{1}= \{1,a\}, F_{2}= \{1,b\}$ are filters   but not
n-fold positive implicative filters since $ 1\longrightarrow
((b^{n}\longrightarrow 0)\longrightarrow b)\in F_{1}$
 and $ 1\in F_{1}$, but $ b\notin F_{1}$;  $ 1\longrightarrow ((a^{n}\longrightarrow
0)\longrightarrow a)\in F_{2}$
 and $ 1\in F_{2}$, but $ a\notin F_{2}$.
\end{ex}

 The following proposition gives a characterization of  n-fold
positive implicative filter  for any $n\geq 1$ .

\begin{prop}\label{propo}
 The following conditions are
equivalent for any filter $F$ and any  $n\geq 1$ :
\begin{itemize}
\item[(i)]$F$ is an n-fold positive implicative filter
\item[(ii)]For all  $x, y \in L$, $(x^{n}\longrightarrow y)\longrightarrow x\in
F$ implies $x\in F$.
\item[ (iii)]For all  $x\in L$, $\overline{x^{n}}\longrightarrow x\in
F$ implies $x\in F$.
\end{itemize}
\end{prop}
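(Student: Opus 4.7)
The plan is to establish the cycle (i)$\Rightarrow$(ii)$\Rightarrow$(iii)$\Rightarrow$(i), since (ii)$\Rightarrow$(iii) is just a specialisation and (iii)$\Rightarrow$(i) will route through the filter property to reduce the general hypothesis to the special case $z=0$.

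For (i)$\Rightarrow$(ii), I want to feed the definition of an n-fold positive implicative filter with a clever substitution. Given $(x^{n}\longrightarrow y)\longrightarrow x\in F$, I set, in the defining implication, the ``outer'' variable to $(x^{n}\longrightarrow y)\longrightarrow x$, the variable playing the role of $y$ in the definition to $x$, and the variable playing the role of $z$ to $y$. The hypothesis $x\longrightarrow((y^{n}\longrightarrow z)\longrightarrow y)\in F$ then becomes $((x^{n}\longrightarrow y)\longrightarrow x)\longrightarrow((x^{n}\longrightarrow y)\longrightarrow x)=1\in F$, which is automatic, while the other hypothesis is exactly our assumption. The definition yields $x\in F$.

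For (ii)$\Rightarrow$(iii), take $y=0$ in (ii); noting $\overline{x^{n}}=x^{n}\longrightarrow 0$, the statement collapses to (iii) immediately.

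For (iii)$\Rightarrow$(i), suppose $x\in F$ and $x\longrightarrow((y^{n}\longrightarrow z)\longrightarrow y)\in F$. Since $F$ is a filter (hence a deductive system by Fact~1), modus ponens gives $(y^{n}\longrightarrow z)\longrightarrow y\in F$. Now $0\le z$, so by Prop.~\ref{pro} (monotonicity in the second argument of $\longrightarrow$) we have $\overline{y^{n}}=y^{n}\longrightarrow 0\le y^{n}\longrightarrow z$, and applying antitonicity in the first argument gives $(y^{n}\longrightarrow z)\longrightarrow y\le\overline{y^{n}}\longrightarrow y$. Since $F$ is upward closed, $\overline{y^{n}}\longrightarrow y\in F$, and (iii) then forces $y\in F$, establishing the defining property of an n-fold positive implicative filter.

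The only mildly delicate step is (i)$\Rightarrow$(ii), where the right substitution has to be spotted; the other two implications reduce to a direct instance and to one application each of the filter property and of Prop.~\ref{pro}. I do not foresee any serious obstacle.
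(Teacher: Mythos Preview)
Your proof is correct and follows essentially the same cycle and reasoning as the paper's proof. The only cosmetic difference is in (i)$\Rightarrow$(ii): the paper instantiates the outer variable of the definition with $1$ (using $1\longrightarrow((x^{n}\longrightarrow y)\longrightarrow x)=(x^{n}\longrightarrow y)\longrightarrow x\in F$ and $1\in F$), whereas you instantiate it with $(x^{n}\longrightarrow y)\longrightarrow x$ itself; both substitutions work and the remaining steps (ii)$\Rightarrow$(iii) and (iii)$\Rightarrow$(i) are identical to the paper's.
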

\begin{proof}
$(i)\longrightarrow (ii)$  :    Suppose that  $F$ is n-fold positive
implicative filter of $L$ and $(x^{n}\longrightarrow
y)\longrightarrow x\in F$, since
$1\longrightarrow((x^{n}\longrightarrow y)\longrightarrow x)=
(x^{n}\longrightarrow y)\longrightarrow x\in F$ and  $ 1\in F$, we
apply the fact that $F$ is n-fold positive implicative filter of $L$
and obtain the result.\\
$(ii)\longrightarrow (iii)$ : We obtain the result by setting
$ y=0$ in the equation $(ii)$.\\
 $(iii)\longrightarrow (i)$  :  Suppose that $ x \longrightarrow
((y^{n}\longrightarrow z)\longrightarrow y)\in F$ and $ x\in F$,
from the fact that  $F$ is
 filter, we obtain  $ (y^{n}\longrightarrow
z)\longrightarrow y\in F$.\\ On the other hand, from Prop.
\ref{pro}(4), we have : $ (y^{n}\longrightarrow z)\longrightarrow
y\leq (y^{n}\longrightarrow 0)\longrightarrow y$, from the fact that
$F$ is filter, we obtain  $ (y^{n}\longrightarrow 0)\longrightarrow
y\in F$,  we apply the hypothesis and obtain $ y\in F$.
\end{proof}
\begin{cor}\label{lien11}
A proper filter  $F$ is an n-fold positive implicative filter if
an only if  for all $x\in L$, $x\vee\overline{x^{n}}\in F$.
\end{cor}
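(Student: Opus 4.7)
The plan is to reduce the statement to the characterization given in Proposition \ref{propo}(iii), which says that a proper filter $F$ is an $n$-fold positive implicative filter if and only if, for every $x \in L$, $\overline{x^{n}} \longrightarrow x \in F$ implies $x \in F$.

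For the ``if'' direction, suppose $x \vee \overline{x^{n}} \in F$ for every $x$, and that $\overline{x^{n}} \longrightarrow x \in F$. From Proposition \ref{pro}(14) we have $x \vee \overline{x^{n}} \leq (\overline{x^{n}} \longrightarrow x) \longrightarrow x$, so upward closure of $F$ gives $(\overline{x^{n}} \longrightarrow x) \longrightarrow x \in F$. Since $F$ is a deductive system, modus ponens with $\overline{x^{n}} \longrightarrow x \in F$ yields $x \in F$, and Proposition \ref{propo}(iii) identifies $F$ as $n$-fold positive implicative.

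For the ``only if'' direction, fix $x \in L$ and set $y = x \vee \overline{x^{n}}$. Since $x \leq y$, iterated use of Proposition \ref{pro}(8) gives $x^{n} \leq y^{n}$, and applying Proposition \ref{pro}(8) once more (the statement about negation) yields $\overline{y^{n}} \leq \overline{x^{n}}$. Also $\overline{x^{n}} \leq y$ by construction, so $\overline{y^{n}} \leq y$, whence $\overline{y^{n}} \longrightarrow y = 1 \in F$ by Proposition \ref{pro}(1). Applying the hypothesis in the form of Proposition \ref{propo}(iii) to $y$ gives $y \in F$, i.e., $x \vee \overline{x^{n}} \in F$.

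The whole argument rests on the chain $\overline{y^{n}} \leq \overline{x^{n}} \leq y$, so the only point that requires care is the monotonicity step $x \leq y \Rightarrow \overline{y^{n}} \leq \overline{x^{n}}$; once that is in hand, the rest is a direct invocation of the characterization already established.
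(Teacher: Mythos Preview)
Your proof is correct and follows essentially the same route as the paper: both directions reduce to Proposition~\ref{propo}(iii), the ``if'' direction uses Proposition~\ref{pro}(14) to get $(\overline{x^{n}}\longrightarrow x)\longrightarrow x\in F$ and then modus ponens, and the ``only if'' direction sets $t=x\vee\overline{x^{n}}$ and uses the chain $\overline{t^{n}}\leq\overline{x^{n}}\leq t$ to conclude $\overline{t^{n}}\longrightarrow t=1\in F$. The arguments are identical up to the name of the auxiliary variable.
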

\begin{proof}
Assume that  for all $x\in L$,  $\overline{x^{n}}\longrightarrow x
\in F$ and $x\vee\overline{x^{n}}\in F$. By Prop. \ref{propo}, we
must show that $x\in F$.  Since by (14)Prop. \ref{pro},
$x\vee\overline{x^{n}} \leq (\overline{x^{n}}\longrightarrow
x)\longrightarrow x$, we have $(\overline{x^{n}}\longrightarrow
x)\longrightarrow x \in F$.
Using the fact that  $\overline{x^{n}}\longrightarrow x \in F$,  we have $x\in F$.\\
Conversely suppose that $F$ is an  n-fold positive implicative
filter. Let $x\in L$. Let  $t= x\vee\overline{x^{n}}$,  we must
show that $t\in F$. Since $x\leq t$, we have $x^{n}\leq t^{n}$ and
then $\overline{t^{n}}\leq \overline{x^{n}}\leq
\overline{x^{n}}\vee x=t$. Hence $\overline{t^{n}}\leq t$ or
equivalently $\overline{t^{n}} \longrightarrow t=1$. So
$\overline{t^{n}} \longrightarrow t \in F$. From this and the fact
 that $F$ is an n-fold positive implicative
filter, by Prop. \ref{propo}, we get that $t\in F$.
\end{proof}
\begin{defi}
$F$  is an n-fold boolean filter  if it satisfies the following
conditions:\\
 $x\vee \overline{x^{n}} \in F$ for all $x\in L$.
In particular 1-fold boolean filters are boolean
filters.\cite{B4}.
\end{defi}

The extension theorem of n-fold positive implicative filters is
obtained from the following result:
\begin{thm}\label{b1}Let $n\geq 1$.
Let $F_{1}$ and $F_{2}$ two filters of $L$  such that
$F_{1}\subseteq F_{2}$. If   $F_{1}$ is an n-fold positive
implicative  filter, then so is $F_{2}$.
\end{thm}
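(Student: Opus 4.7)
The natural route is to invoke the characterization of n-fold positive implicative filters given in Proposition \ref{propo}(iii): $F$ is such a filter iff, for every $x\in L$, the implication $\overline{x^{n}}\rightarrow x\in F \Rightarrow x\in F$ holds. So I would start by fixing $x\in L$ with $\overline{x^{n}}\rightarrow x\in F_{2}$, set $t:=\overline{x^{n}}\rightarrow x$ and $w:=t\rightarrow x = (\overline{x^{n}}\rightarrow x)\rightarrow x$, and aim to deduce $x\in F_{2}$ by producing $w\in F_{2}$ and then using that $F_{2}$ is a (deductive) filter together with $t\in F_{2}$.

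The key step, where the hypothesis on $F_{1}$ is used, is to show that $\overline{w^{n}}\rightarrow w=1$, so that it automatically lies in $F_{1}$. Two inequalities combine to give this: first, $x\leq t\rightarrow x = w$ by (6), which yields $x^{n}\leq w^{n}$ and hence $\overline{w^{n}}\leq \overline{x^{n}}$ by (8); second, (14) gives $\overline{x^{n}}\vee x\leq(\overline{x^{n}}\rightarrow x)\rightarrow x=w$, so in particular $\overline{x^{n}}\leq w$. Chaining these, $\overline{w^{n}}\leq\overline{x^{n}}\leq w$, which by (1) is the same as $\overline{w^{n}}\rightarrow w=1\in F_{1}$.

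Now the hypothesis kicks in: since $F_{1}$ is an n-fold positive implicative filter, Proposition \ref{propo}(iii) applied to $w$ yields $w\in F_{1}\subseteq F_{2}$. Together with $t\in F_{2}$ and the identity $w=t\rightarrow x$, the fact that $F_{2}$ is a filter (equivalently a deductive system, by Fact 1) forces $x\in F_{2}$. Verifying $1\in F_{2}$ is immediate since $F_{2}$ is a filter, so $F_{2}$ satisfies condition (iii) of Proposition \ref{propo}, hence is an n-fold positive implicative filter.

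The only mildly delicate step is the inequality $\overline{w^{n}}\leq w$; everything else is just bookkeeping with the residuation rules and the definition of $w$. I expect no real obstacle beyond correctly invoking (14) together with monotonicity of $x\mapsto x^{n}$ and order-reversal of negation, and then remembering that Proposition \ref{propo}(iii) is the characterization that makes the containment $F_{1}\subseteq F_{2}$ usable at all.
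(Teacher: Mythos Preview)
Your proof is correct, but it takes a different route from the paper. The paper invokes Corollary~\ref{lien11}, which characterizes n-fold positive implicative filters by the universally quantified \emph{membership} condition $x\vee\overline{x^{n}}\in F$ for all $x\in L$; such a condition passes trivially from $F_{1}$ to $F_{2}$ under $F_{1}\subseteq F_{2}$, so the extension theorem becomes a one-line observation. You instead work directly with the \emph{implicational} characterization of Proposition~\ref{propo}(iii), and to make the inclusion usable you construct the auxiliary element $w=(\overline{x^{n}}\rightarrow x)\rightarrow x$ and verify $\overline{w^{n}}\leq w$ so that $w\in F_{1}$ is forced. The computation you perform here (using $x\leq w$, monotonicity of $n$-th powers, order-reversal of negation, and (14)) is essentially the same calculation the paper uses to prove the ``only if'' direction of Corollary~\ref{lien11}, with $x\vee\overline{x^{n}}$ playing the role of your $w$ (and (14) relating the two). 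Thus your argument is more self-contained and bypasses the corollary, while the paper's is shorter because the key work has already been packaged into Corollary~\ref{lien11}.
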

\begin{proof}
If   $F_{1}$ is an n-fold positive implicative filter, then by
Corollary \ref{lien11}, we get $\overline{x^{n}}\vee x\in F_{1}$ for
all $x\in L$. Since $F_{1}\subseteq F_{2}$, we have
$\overline{x^{n}}\vee x\in F_{2}$ for all $x\in L$ and  by Corollary
\ref{lien11}, $F_{2}$ is an n-fold positive implicative  filter.
\end{proof}
The following theorem  gives the relation between  n-fold positive
implicative filters and  n-fold  implicative filters in residuated
lattices.
\begin{thm}\label{lien1}
Every n-fold positive implicative filter of $L$ is an n-fold
implicative filter of $L$.
\end{thm}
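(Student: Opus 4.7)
The plan is to reduce both properties to the ``test element'' $x^n \vee \overline{x^n}$, using the two characterizations already established: by Corollary \ref{lien11}, $F$ is an $n$-fold positive implicative filter iff $x \vee \overline{x^n} \in F$ for every $x \in L$, and by Proposition \ref{ch1}(ii), $F$ is an $n$-fold implicative filter iff $x^n \rightarrow x^{2n} \in F$ for every $x \in L$. So given the first condition on $F$, I would prove $x^n \rightarrow x^{2n} \in F$ for an arbitrary $x \in L$ and then invoke Proposition \ref{ch1}.

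The first step is to upgrade $x \vee \overline{x^n} \in F$ to $x^n \vee \overline{x^n} \in F$. Here I would use inequality (13), which in the form $(u \vee v)^{n} \leq u^n \vee v$ yields
\[
(x \vee \overline{x^n})^{n} \leq x^n \vee \overline{x^n}.
\]
Since $F$ is a filter, closure under $\otimes$ gives $(x \vee \overline{x^n})^{n} \in F$, and then upward closure gives $x^n \vee \overline{x^n} \in F$.

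The second step is to observe that $x^n \vee \overline{x^n} \leq x^n \rightarrow x^{2n}$. Indeed, by residuation $x^n \otimes x^n = x^{2n}$ gives $x^n \leq x^n \rightarrow x^{2n}$, and from $0 \leq x^{2n}$ together with the monotonicity of $\rightarrow$ in the consequent (item (8) of Proposition \ref{pro}) we get $\overline{x^n} = x^n \rightarrow 0 \leq x^n \rightarrow x^{2n}$. Taking the join of these two inequalities and again using that $F$ is upward closed, we conclude $x^n \rightarrow x^{2n} \in F$.

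I expect the only delicate point to be picking the right specialization of (13); once one chooses $m = 1$ and $y = \overline{x^n}$, the rest is automatic from the filter axioms and the elementary identities of Proposition \ref{pro}. No case analysis or induction on $n$ is needed, and the argument is uniform in $n$.
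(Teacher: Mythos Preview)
Your argument is correct and considerably shorter than the paper's. The paper proceeds by verifying condition~(iii) of Proposition~\ref{ch1} directly: assuming $x^{n+1}\rightarrow y\in F$, it carries out a fifteen-step calculation to establish the inequality
\[
(x^{n}\rightarrow y)^{n}\rightarrow y \;\leq\; (x^{n+1}\rightarrow y)^{n}\rightarrow (x^{n}\rightarrow y),
\]
and then applies Proposition~\ref{propo}(ii) to the element $x^{n}\rightarrow y$ to conclude $x^{n}\rightarrow y\in F$. Your route instead leverages the two characterizations the paper has already proved (Corollary~\ref{lien11} and Proposition~\ref{ch1}(ii)), reducing everything to the single lattice inequality $x^{n}\vee\overline{x^{n}}\leq x^{n}\rightarrow x^{2n}$ together with the observation $(x\vee\overline{x^{n}})^{n}\leq x^{n}\vee\overline{x^{n}}$ from~(13). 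This buys a dramatic simplification and avoids the long chain of rewritings; the only minor point worth flagging is that Corollary~\ref{lien11} is stated for \emph{proper} filters, but the direction you invoke ($n$-fold positive implicative $\Rightarrow$ $x\vee\overline{x^{n}}\in F$) does not actually use properness in its proof, and in any case $F=L$ is trivially $n$-fold implicative.
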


\begin{proof}
Let $F$ be an n-fold positive implicative filter of $L$.
 Let $x, y\in L$ be such that  $x^{n+1}\longrightarrow y\in F$.
 By Prop. \ref{pro} we  have the
 following:
\begin{itemize}
\item[(1)]$(x^{n+1}\longrightarrow y)^{n}\longrightarrow (x^{n}\longrightarrow y)$=\\
$[(x^{n+1}\longrightarrow y)^{n-1}\otimes (x^{n+1}\longrightarrow
y)]\longrightarrow (x^{n}\longrightarrow
y)$=\\$[(x^{n+1}\longrightarrow y)^{n-1}]\longrightarrow
[(x^{n+1}\longrightarrow y)\longrightarrow (x^{n}\longrightarrow
y)$=\\$[(x^{n+1}\longrightarrow y)^{n-1}]\longrightarrow
[(x^{n+1}\longrightarrow y)\longrightarrow[
x^{n-1}\longrightarrow(x\longrightarrow
y)]$=\\$[(x^{n+1}\longrightarrow y)^{n-1}]\longrightarrow
[x^{n-1}\longrightarrow[(x^{n+1}\longrightarrow y)\longrightarrow
(x\longrightarrow y)]]$=\\$[(x^{n+1}\longrightarrow
y)^{n-1}]\longrightarrow
[x^{n-1}\longrightarrow[(x\longrightarrow(x^{n}\longrightarrow
y))\longrightarrow (x\longrightarrow y)]]$
\item[(2)]So by (1) we have :$(x^{n+1}\longrightarrow y)^{n}\longrightarrow (x^{n}\longrightarrow
y)$=\\$[(x^{n+1}\longrightarrow y)^{n-1}]\longrightarrow
[x^{n-1}\longrightarrow[(x\longrightarrow(x^{n}\longrightarrow
y))\longrightarrow (x\longrightarrow y)]]$
\item[ (3)] We have : $(x^{n}\longrightarrow
y)\longrightarrow y\leq [(x\longrightarrow(x^{n}\longrightarrow
y))\longrightarrow (x\longrightarrow y)]$
\item[ (4)] By (3) we have :$[(x^{n+1}\longrightarrow y)^{n-1}]\longrightarrow
[x^{n-1}\longrightarrow[(x^{n}\longrightarrow y)\longrightarrow
y]]\leq [(x^{n+1}\longrightarrow y)^{n-1}]\longrightarrow
[x^{n-1}\longrightarrow[[(x\longrightarrow(x^{n}\longrightarrow
y))\longrightarrow (x\longrightarrow y)]]]$
\item[ (5)] By (4) and (2) , we have : $((x^{n+1}\longrightarrow y)^{n-1})\longrightarrow
(x^{n-1}\longrightarrow((x^{n}\longrightarrow y)\longrightarrow
y))\leq (x^{n+1}\longrightarrow y)^{n}\longrightarrow
(x^{n}\longrightarrow y)$.
\item[ (6)]We have: $[(x^{n+1}\longrightarrow y)^{n-1}]\longrightarrow
[x^{n-1}\longrightarrow[(x^{n}\longrightarrow y)\longrightarrow
y]]= [(x^{n+1}\longrightarrow y)^{n-1}]\longrightarrow
[(x^{n}\longrightarrow y)\longrightarrow[x^{n-1}\longrightarrow
y]]= [(x^{n}\longrightarrow y)]\longrightarrow
[(x^{n+1}\longrightarrow
y)^{n-1}\longrightarrow[x^{n-1}\longrightarrow y]] $

\item[ (7)]By (6) and  (5) we get : $(x^{n}\longrightarrow y)\longrightarrow
[(x^{n+1}\longrightarrow
y)^{n-1}\longrightarrow(x^{n-1}\longrightarrow y)]\leq
(x^{n+1}\longrightarrow y)^{n}\longrightarrow
(x^{n}\longrightarrow y) $
\item[ (8)]We have : $(x^{n}\longrightarrow y)\otimes(x^{n+1}\longrightarrow
y)^{n-1}\otimes x^{n-1}= (x^{n}\longrightarrow
y)\otimes(x^{n+1}\longrightarrow
y)^{n-2}\otimes(x^{n+1}\longrightarrow y)\otimes x^{n-2}\otimes x=
(x^{n}\longrightarrow y)\otimes(x^{n+1}\longrightarrow
y)^{n-2}\otimes x^{n-2}\otimes x\otimes(x^{n+1}\longrightarrow y)
$
\item[ (9)]We also have : $x\otimes(x^{n+1}\longrightarrow y)=x\otimes[x\longrightarrow
(x^{n}\longrightarrow y)]\leq x^{n}\longrightarrow y$
\item[ (10)]Then : $(x^{n}\longrightarrow y)\otimes(x^{n+1}\longrightarrow
y)^{n-2}\otimes x^{n-2}\otimes x\otimes(x^{n+1}\longrightarrow
y)\leq (x^{n}\longrightarrow y)\otimes(x^{n+1}\longrightarrow
y)^{n-2}\otimes x^{n-2}\otimes (x^{n}\longrightarrow y)$
\item[ (11)]So: $(x^{n}\longrightarrow y)\otimes(x^{n+1}\longrightarrow
y)^{n-1}\otimes x^{n-1}\leq (x^{n}\longrightarrow
y)^{2}\otimes(x^{n+1}\longrightarrow y)^{n-2}\otimes x^{n-2}$
\item[ (12)]By (11)  we get: $[(x^{n}\longrightarrow
y)^{2}\otimes(x^{n+1}\longrightarrow y)^{n-2}\otimes
x^{n-2}]\longrightarrow y \leq [(x^{n}\longrightarrow
y)\otimes(x^{n+1}\longrightarrow y)^{n-1}\otimes x^{n-1}]
\longrightarrow y$
\item[ (13)]By (12),  we have: $((x^{n}\longrightarrow
y)^{2}\otimes(x^{n+1}\longrightarrow y)^{n-2})\longrightarrow
(x^{n-2}\longrightarrow y )\leq ((x^{n}\longrightarrow
y)\otimes(x^{n+1}\longrightarrow y)^{n-1})\longrightarrow (x^{n-1}
\longrightarrow y)$
\item[ (14)]So :  $(x^{n}\longrightarrow
y)^{2}\longrightarrow((x^{n+1}\longrightarrow
y)^{n-2}\longrightarrow (x^{n-2}\longrightarrow y ))\leq
(x^{n}\longrightarrow y)\longrightarrow((x^{n+1}\longrightarrow
y)^{n-1}\longrightarrow (x^{n-1} \longrightarrow y))$
\item[ (15)]By  (14) and (7), we have : $(x^{n}\longrightarrow
y)^{2}\longrightarrow((x^{n+1}\longrightarrow
y)^{n-2}\longrightarrow (x^{n-2}\longrightarrow y ))\leq
(x^{n+1}\longrightarrow y)^{n}\longrightarrow
(x^{n}\longrightarrow y) $
\end{itemize}
By repeating  (15) n times, we obtain: $(x^{n}\longrightarrow
y)^{n}\longrightarrow((x^{n+1}\longrightarrow
y)^{0}\longrightarrow (x^{0}\longrightarrow y ))\leq
(x^{n+1}\longrightarrow y)^{n}\longrightarrow
(x^{n}\longrightarrow y)$. This implies  $(x^{n}\longrightarrow
y)^{n}\longrightarrow(1\longrightarrow (1\longrightarrow y ))\leq
(x^{n+1}\longrightarrow y)^{n}\longrightarrow
(x^{n}\longrightarrow y)$.  Hence $(x^{n}\longrightarrow
y)^{n}\longrightarrow y \leq (x^{n+1}\longrightarrow
y)^{n}\longrightarrow (x^{n}\longrightarrow y)$.  Then
$((x^{n}\longrightarrow y)^{n}\longrightarrow y) \longrightarrow
((x^{n+1}\longrightarrow y)^{n}\longrightarrow
(x^{n}\longrightarrow y))= 1$.  Hence by  Prop. \ref{pro} we  have
:  $(x^{n+1}\longrightarrow y)^{n} \longrightarrow
(((x^{n}\longrightarrow y)^{n}\longrightarrow y) \longrightarrow
(x^{n}\longrightarrow y))= 1 \in F$.\\Since
$(x^{n+1}\longrightarrow y)\in F$  and $F$ is a filter, we have
$(x^{n+1}\longrightarrow y)^{n}\in F$. By the fact that $F$ is a
filter and $(x^{n+1}\longrightarrow y)^{n} \longrightarrow
(((x^{n}\longrightarrow y)^{n}\longrightarrow y) \longrightarrow
(x^{n}\longrightarrow y))\in F$, we have $((x^{n}\longrightarrow
y)^{n}\longrightarrow y) \longrightarrow (x^{n}\longrightarrow y)
\in F$. Since $F$ is an n-fold positive implicative filter, by
Prop. \ref{propo} we  have:  $x^{n}\longrightarrow y\in F$.\\ By
Prop. \ref{ch1}, $F$ is an n-fold  implicative filter.\\
\end{proof}

 The following Example shows that n-fold  implicative filters may not be n-fold positive
implicative filters.
\begin{ex}\label{ex2}
Let  $L$ be a residuated lattice from Example \ref{expp} and
$n\geq 1$. $F_{1}= \{1,a\}, F_{2}= \{1,b\}$ are n-fold implicative
filters but  not  n-fold positive implicative filters since  $
1\longrightarrow ((b^{n}\longrightarrow 0)\longrightarrow b)\in
F_{1}$
 and $ 1\in F_{1}$, but $ b\notin F_{1}$;  $ 1\longrightarrow ((a^{n}\longrightarrow
0)\longrightarrow a)\in F_{2}$
 and $ 1\in F_{2}$, but $ a\notin F_{2}$.
\end{ex}
\begin{prop}\label{propo15}
Every  n-fold positive implicative filter is an (n+1)-fold positive
implicative filter.
\end{prop}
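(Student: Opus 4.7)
The plan is to reduce the claim to the characterization supplied by Corollary~\ref{lien11}, which says that a proper filter $F$ is an n-fold positive implicative filter precisely when $x\vee\overline{x^{n}}\in F$ for every $x\in L$. Thus the whole task becomes: from $x\vee\overline{x^{n}}\in F$ for all $x\in L$, deduce $x\vee\overline{x^{n+1}}\in F$ for all $x\in L$.

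The first observation is that $x^{n+1}\leq x^{n}$. Indeed, since $x\leq 1$ we have $x\otimes x^{n}\leq 1\otimes x^{n}=x^{n}$ by the monotonicity of $\otimes$ recorded in Proposition~\ref{pro}(8). Applying the antitonicity of negation (also in Proposition~\ref{pro}(8)), this gives $\overline{x^{n}}\leq \overline{x^{n+1}}$, and therefore
\[
x\vee\overline{x^{n}}\;\leq\; x\vee\overline{x^{n+1}}.
\]

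Now assume $F$ is an n-fold positive implicative filter. By Corollary~\ref{lien11}, $x\vee\overline{x^{n}}\in F$ for every $x\in L$. Since $F$ is a filter (Proposition~\ref{propo1}) it is upward closed (condition F2), so the inequality above forces $x\vee\overline{x^{n+1}}\in F$ for every $x\in L$. Applying Corollary~\ref{lien11} in the reverse direction yields that $F$ is an $(n+1)$-fold positive implicative filter.

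There is no real obstacle: everything rests on the chain $x^{n+1}\leq x^{n}$ and the order-reversing behaviour of $\overline{\,\cdot\,}$, both immediate from Proposition~\ref{pro}. The only point that deserves a brief check is that the Corollary~\ref{lien11} characterization is stated for \emph{proper} filters, but if $F=L$ the conclusion is trivial, so we may assume $F$ is proper throughout.
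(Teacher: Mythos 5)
Your proof is correct, but it runs through a different characterization than the paper does. The paper argues directly with Proposition~\ref{propo}(iii): assuming $\overline{x^{n+1}}\rightarrow x\in F$, it uses $\overline{x^{n}}\leq \overline{x^{n+1}}$ (hence $\overline{x^{n+1}}\rightarrow x\leq \overline{x^{n}}\rightarrow x$) and upward closure to get $\overline{x^{n}}\rightarrow x\in F$, then $x\in F$ by n-fold positive implicativity, and concludes via the same characterization at level $n+1$. You instead route the argument through Corollary~\ref{lien11}, turning the claim into the statement that $x\vee\overline{x^{n}}\in F$ forces $x\vee\overline{x^{n+1}}\in F$, which again follows from $\overline{x^{n}}\leq\overline{x^{n+1}}$ and upward closure. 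Both proofs hinge on exactly the same monotonicity facts ($x^{n+1}\leq x^{n}$, antitonicity of $\overline{\,\cdot\,}$, condition (F2)), so neither is deeper than the other; the paper's version is marginally cleaner because Proposition~\ref{propo} is stated for arbitrary filters, so it avoids the properness caveat of Corollary~\ref{lien11} that you rightly flag and dispose of by treating $F=L$ separately. A small bonus of your route is that it makes explicit the fact that n-fold boolean filters are automatically $(n+1)$-fold boolean, which in view of Proposition~\ref{lien12} is literally the same statement as the one being proved.
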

\begin{proof}
Let   $F$  be an n-fold positive implicative filter of $L$.  Let
$x\in L$ such that $\overline{x^{n+1}}\longrightarrow x\in F$. We
show that $x\in F$.  Since $\overline{x^{n+1}}\longrightarrow x \leq
\overline{x^{n}}\longrightarrow x $, by the fact that  $F$ is a
filter, we get  $\overline{x^{n}}\longrightarrow x\in F$. Since $F$
is an n-fold positive implicative filter of $L$, by Prop.
\ref{propo},  we obtain $x\in F$. By Prop. \ref{propo}, $F$ is an
(n+1)-fold positive implicative filter of $L$.
\end{proof}
By the following example, we show that the converse of Prop.
\ref{propo15} is not true in general.

\begin{ex}
Let $L$ be a residuated lattice from Example \ref{exim3}.It is clear
that $\{1\}$ is an 3-fold positive implicative filter  but $\{1\}$
is not an 2-fold positive implicative filter, since
$\overline{b^{2}}\longrightarrow b \in \{1\}$ and $b\notin \{1\}$.
\end{ex}
\begin{defi} A
residuated lattice $L$ is called n-fold positive implicative
residuated lattice if it satisfies $ \overline{y^{n}}\longrightarrow
y=y$ for each $y\in L$.
\end{defi}
\begin{rem}
Since $ \overline{x^{n}}\longrightarrow x\leq
\overline{x}\longrightarrow x $ for all $x\in L $, it is clear that
1-fold residuated lattices are n-fold residuated lattices. It is
also clear that n-fold residuated lattices are (n+1)-fold residuated
lattices since $ \overline{x^{n+1}}\longrightarrow x\leq
\overline{x^{n}}\longrightarrow x $ for all $x\in L $
\end{rem}
The following example shows  that residuated lattices are not in
general n-fold positive implicative residuated lattices.
\begin{ex} \label{expp3}
Let  $L$ be a residuated lattice from Example \ref{expp}  and
$n\geq 2$.   $ L$ is not an  n-fold positive implicative residuated
lattice since $ (b^{n}\longrightarrow 0)\longrightarrow b=
(b\longrightarrow 0)\longrightarrow b= 0\longrightarrow b =1\neq b$
\end{ex}

 It is follows from Prop.\ref{propo} and  Prop.\ref{propo1} the following proposition:
\begin{prop}\label{propoo}
If $L$ is an n-fold positive implicative residuated lattice then the
notion of n-fold positive implicative filter and filter coincide.
\end{prop}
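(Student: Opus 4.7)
The plan is to prove the two inclusions separately and observe that one direction is essentially immediate from the hypothesis.

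First, by Prop.~\ref{propo1}, every n-fold positive implicative filter of $L$ is automatically a filter, and this direction does not require any hypothesis on $L$. So the content of the proposition lies in the converse: under the assumption that $L$ is an n-fold positive implicative residuated lattice, every filter is an n-fold positive implicative filter.

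For the converse, I would fix an arbitrary filter $F$ of $L$ and use the characterization from Prop.~\ref{propo}(iii), which says that $F$ is an n-fold positive implicative filter if and only if for every $x\in L$, $\overline{x^{n}}\longrightarrow x \in F$ implies $x\in F$. Here the defining identity of an n-fold positive implicative residuated lattice does the work for us directly: since $\overline{x^{n}}\longrightarrow x = x$ for every $x\in L$, the hypothesis $\overline{x^{n}}\longrightarrow x \in F$ is literally the same as $x\in F$, so the implication holds trivially. Therefore $F$ satisfies condition (iii) of Prop.~\ref{propo}, hence it is an n-fold positive implicative filter.

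There is no genuine obstacle to overcome here; the two characterizations (the definition of an n-fold positive implicative residuated lattice on one hand, and the characterization of n-fold positive implicative filters via $\overline{x^{n}}\longrightarrow x$ on the other hand) have been set up so that they mesh exactly. The only care needed is to cite Prop.~\ref{propo} in the right form and to remember to invoke Prop.~\ref{propo1} for the trivial direction, so that the equivalence (``the two notions coincide'') is actually proved and not just one inclusion.
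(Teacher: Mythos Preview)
Your proof is correct and matches the paper's approach exactly: the paper simply states that the result follows from Prop.~\ref{propo} and Prop.~\ref{propo1}, and you have spelled out precisely those two ingredients, using Prop.~\ref{propo1} for one inclusion and Prop.~\ref{propo}(iii) together with the defining identity $\overline{x^{n}}\longrightarrow x = x$ for the other.
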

\begin{prop}\label{prch}
 The following conditions are
equivalent :
\begin{itemize}
\item[(i)]$L$ is an n-fold positive implicative  residuated lattice.
\item[(ii)] Every filter $F$ of $L$ is an n-fold positive implicative
filter of $L$
\item[(iii)] $\{1\}$ is an n-fold positive implicative filter
\end{itemize}
\end{prop}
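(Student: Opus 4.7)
The plan is to prove the circular chain $(i) \Rightarrow (ii) \Rightarrow (iii) \Rightarrow (i)$, using the tools already built up in the section (principally Prop.~\ref{propo}, Corollary~\ref{lien11}, and Prop.~\ref{propoo}).

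For $(i) \Rightarrow (ii)$, there is essentially nothing to do: Prop.~\ref{propoo} already asserts that if $L$ is n-fold positive implicative then the notions of filter and n-fold positive implicative filter coincide. For $(ii) \Rightarrow (iii)$, I would simply observe that $\{1\}$ is a filter of $L$, so by the assumed property in $(ii)$ it is automatically an n-fold positive implicative filter.

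The only direction that requires a small argument is $(iii) \Rightarrow (i)$. Here the plan is as follows. Assuming $L$ is nontrivial, $\{1\}$ is a proper filter, so Corollary~\ref{lien11} applied to $F=\{1\}$ gives
\begin{equation*}
x \vee \overline{x^{n}} = 1 \quad \text{for every } x\in L.
\end{equation*}
Fix $y\in L$. By Prop.~\ref{pro}(14), $y \vee \overline{y^{n}} \leq (\overline{y^{n}}\longrightarrow y)\longrightarrow y$, and combining this with the displayed equality gives $(\overline{y^{n}}\longrightarrow y)\longrightarrow y = 1$, i.e.\ $\overline{y^{n}}\longrightarrow y \leq y$. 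Since the reverse inequality $y \leq \overline{y^{n}}\longrightarrow y$ is the general fact $x \leq z \longrightarrow x$ from Prop.~\ref{pro}(6), we conclude $\overline{y^{n}}\longrightarrow y = y$, which is exactly the defining condition of an n-fold positive implicative residuated lattice.

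No step looks like a real obstacle: the content of the proposition is almost entirely bundled into Corollary~\ref{lien11}, and the remaining work is just extracting an inequality from Prop.~\ref{pro}(14) and pairing it with the trivial bound $y \leq \overline{y^{n}}\longrightarrow y$. The only point to be slightly careful about is the implicit nontriviality assumption needed for $\{1\}$ to be a \emph{proper} filter so that Corollary~\ref{lien11} is applicable; in the trivial case $L=\{1\}$ the statement $\overline{y^{n}}\longrightarrow y = y$ holds vacuously.
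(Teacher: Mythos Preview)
Your proposal is correct and follows essentially the same route as the paper: the same cyclic chain $(i)\Rightarrow(ii)\Rightarrow(iii)\Rightarrow(i)$, invoking Prop.~\ref{propoo} for the first step, the fact that $\{1\}$ is a filter for the second, and Corollary~\ref{lien11} together with Prop.~\ref{pro}(14) and the bound $y\leq \overline{y^{n}}\longrightarrow y$ for the third. Your explicit remark on the nontriviality hypothesis needed to apply Corollary~\ref{lien11} to $\{1\}$ is a nice point of care that the paper leaves implicit.
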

\begin{proof}
$(i)\longrightarrow (ii)$ : Follows from Prop.\ref{propoo}\\
$(ii)\longrightarrow (iii)$ : Follows from the fact that $\{1\}$ is
a filter of $L$.\\
 $(iii)\longrightarrow (i)$: Assume that $\{1\}$ is
an n-fold positive implicative filter. Let $x\in L$.  By Corollary
\ref{lien11}, for all $x\in L$ holds $x\vee\overline{x^{n}}=1 $. By
(14)Prop. \ref{pro}, $x\vee\overline{x^{n}} \leq
(\overline{x^{n}}\longrightarrow x)\longrightarrow x$, Hence
$\overline{x^{n}}\longrightarrow x \longrightarrow x=1 $ or
equivalently $\overline{x^{n}}\longrightarrow x \leq x $, and by the
fact that $ x \leq\overline{x^{n}}\longrightarrow x $, we have
$\overline{x^{n}}\longrightarrow x = x $.
\end{proof}

The following result which follows from Prop. \ref{prch} and Prop.
\ref{ch5},  gives the relation between n-fold positive implicative
residuated lattices and  n-fold implicative residuated lattices.

\begin{prop}\label{cor15}
n-fold positive implicative residuated lattices are n-fold
implicative residuated lattices.
\end{prop}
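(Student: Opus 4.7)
The plan is to prove this by reducing everything to the filter-level characterizations already established, rather than attempting a direct computational argument on $L$. The key observation is that both "n-fold positive implicative" and "n-fold implicative" have been characterized at the level of the trivial filter $\{1\}$: by Prop.~\ref{prch}(iii) for the positive implicative side, and by Prop.~\ref{ch5}(iii) for the implicative side. This lets me turn a statement about residuated lattices into a statement about a single filter, where Theorem~\ref{lien1} can be applied directly.

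So I would proceed in three short steps. First, assume $L$ is an n-fold positive implicative residuated lattice; by the equivalence $(i)\Longleftrightarrow(iii)$ in Prop.~\ref{prch}, this yields that $\{1\}$ is an n-fold positive implicative filter of $L$. Second, apply Theorem~\ref{lien1}, which asserts that every n-fold positive implicative filter is an n-fold implicative filter, to conclude that $\{1\}$ is an n-fold implicative filter of $L$. Third, invoke the equivalence $(iii)\Longleftrightarrow(i)$ in Prop.~\ref{ch5} to deduce that $L$ itself is an n-fold implicative residuated lattice.

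There is essentially no hard step here once the chain is assembled correctly: the genuine work was already carried out in Theorem~\ref{lien1}, whose proof contains the delicate inequality manipulations relating $(x^{n+1}\rightarrow y)^n$ and $(x^n\rightarrow y)^n\rightarrow y$. The only thing to be careful about is to cite the correct directions of the equivalences in Prop.~\ref{prch} and Prop.~\ref{ch5}, and to notice that passing through the trivial filter $\{1\}$ is legitimate because $\{1\}$ is always a filter of any residuated lattice. If one wanted to avoid Theorem~\ref{lien1}, one could alternatively try to derive $x^n = x^{2n}$ directly from $\overline{y^n}\rightarrow y = y$, but this detour is unnecessary and would duplicate the inequality work already done; the filter-theoretic route is the cleanest.
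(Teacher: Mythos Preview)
Your proposal is correct and follows essentially the same route as the paper, which simply states that the result follows from Prop.~\ref{prch} and Prop.~\ref{ch5}; you have made the argument explicit by also invoking Theorem~\ref{lien1} as the bridge between the two characterizations, which is indeed the step the paper leaves implicit.
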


By the following example, we show that the converse of Prop.
\ref{cor15} is not true in general.
\begin{ex}
A  residuated lattice $L$ from Example \ref{expp} is an n-fold
implicative  residuated lattice but by Example \ref{expp3}, it is
not an fold positive implicative residuated lattice.
\end{ex}

 The following result which follows from Prop. \ref{prch} and  Corollary \ref{lien11}, gives
new a characterization of n-fold positive implicative residuated
lattices.
\begin{cor}\label{prch34}   A
residuated lattice $L$ is an n-fold positive implicative
residuated lattice if and only if it satisfies $
\overline{y^{n}}\vee y=1$ for each $y\in L$.
\end{cor}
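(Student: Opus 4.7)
The plan is to chain together the two results cited in the sentence preceding the statement, namely Proposition \ref{prch} and Corollary \ref{lien11}, to rewrite the property ``$L$ is an n-fold positive implicative residuated lattice'' as a condition involving only the supremum $\overline{y^{n}}\vee y$.

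First I would invoke Proposition \ref{prch}, which gives the equivalence between $L$ being n-fold positive implicative and $\{1\}$ being an n-fold positive implicative filter of $L$. This reduces the problem to analyzing the singleton filter. Then I would apply Corollary \ref{lien11} to the filter $F=\{1\}$: the corollary asserts that a proper filter $F$ is n-fold positive implicative if and only if $x\vee\overline{x^{n}}\in F$ for every $x\in L$. Specializing to $F=\{1\}$, membership $x\vee\overline{x^{n}}\in\{1\}$ is equivalent to the equality $x\vee\overline{x^{n}}=1$.

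Combining the two equivalences, $L$ is n-fold positive implicative if and only if $y\vee\overline{y^{n}}=1$ for each $y\in L$, which is precisely the claimed characterization. No computation beyond substitution is required, and there is no real obstacle: the only point to check is that $\{1\}$ is indeed a proper filter of $L$ (which holds since $L$ is assumed to have a residuated lattice structure with $0\neq 1$ in the interesting case), so that Corollary \ref{lien11} applies. The proof can therefore be written in two or three short lines.
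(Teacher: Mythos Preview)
Your proposal is correct and mirrors exactly the paper's own argument: the corollary is stated as an immediate consequence of Proposition~\ref{prch} (equivalence with $\{1\}$ being an n-fold positive implicative filter) and Corollary~\ref{lien11} (the $x\vee\overline{x^{n}}\in F$ characterization), specialized to $F=\{1\}$. Nothing more is needed.
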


\begin{prop} \label{crch0}   If $L$ is a totaly ordered  residuated
lattice, then any n-fold positive implicative filter $F$ is
maximal filter of  $L$ and  $L/F$ is a locally finite residuated
lattice.
\end{prop}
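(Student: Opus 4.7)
The plan is to verify the criterion of Proposition \ref{max}: I want to show that for every $x\in L$, $x\notin F$ if and only if $\overline{x^{m}}\in F$ for some $m\geq 1$. Once maximality is established, the local finiteness of $L/F$ follows immediately from the Lemma stating that $F$ is maximal if and only if $L/F$ is locally finite.

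For the forward direction, I would invoke Corollary \ref{lien11}, which gives $x\vee \overline{x^{n}}\in F$ for every $x\in L$ (using that $F$ is a proper $n$-fold positive implicative filter). The total ordering hypothesis is the crucial ingredient here: in a chain, $x\vee \overline{x^{n}}$ must coincide with one of its two arguments. Thus $F$ contains either $x$ itself or $\overline{x^{n}}$. Consequently, if $x\notin F$ we are forced into $\overline{x^{n}}\in F$, which yields the existence of the required exponent (namely $m=n$).

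For the reverse direction, suppose $\overline{x^{m}}\in F$ for some $m\geq 1$ and, aiming for a contradiction, that $x\in F$. Since filters are closed under $\otimes$, we would have $x^{m}\in F$, and then $x^{m}\otimes\overline{x^{m}}\in F$. But Proposition \ref{pro} gives $y\otimes\overline{y}=0$, so $0\in F$, forcing $F=L$ and contradicting properness. Hence $x\notin F$, and condition (ii) of Proposition \ref{max} is established, so $F$ is a maximal filter. The locally finite quotient statement is then just an appeal to the Lemma immediately following Proposition \ref{max}.

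I do not foresee a genuine obstacle: the only step using the totally ordered hypothesis is the collapse of $x\vee\overline{x^{n}}$ to one of its operands, and everything else is bookkeeping with Corollary \ref{lien11}, Proposition \ref{max}, and the properness of $F$. The slight care needed is to make sure the n-fold positive implicative filter under consideration is proper (otherwise $F=L$ is certainly not maximal); this is implicit in the hypothesis, and once it is recorded the argument above goes through verbatim.
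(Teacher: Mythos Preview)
Your proof is correct and follows essentially the same route as the paper's: use the total order together with a characterization of $n$-fold positive implicative filters to show that $x\notin F$ forces $\overline{x^{m}}\in F$ for some $m$, then apply Proposition~\ref{max} and the subsequent Lemma. The only cosmetic difference is that the paper invokes Proposition~\ref{propo}(iii) to deduce $x<\overline{x^{n}}$ (hence $x^{n+1}=0$ and $\overline{x^{n+1}}=1\in F$), whereas you use Corollary~\ref{lien11} to obtain $\overline{x^{n}}\in F$ directly.
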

\begin{proof}Let $L$ be a totaly ordered residuated
lattice. Assume that $F$ is n-fold positive implicative filter and
let $x\in L$ be such an element that $x\notin F$. From Prop.
\ref{propo} an assumption $\overline{x^{n}}\leq x$, or
equivalently $\overline{x^{n}}\ \longrightarrow x=1\in F$ leads to
a contradiction $x\in F$, so we necessarily have $x<
\overline{x^{n}}$. Therefore $x^{n+1}=0\in F$ and so
$\overline{x^{n+1}}=1\in F$ . The thesis follows from Prop.
\ref{max}.
\end{proof}
At consequence of Prop. \ref{crch0}, we have the following
results:

\begin{cor}\label{crch}
A totaly ordered residuated lattice is a locally finite  if
$\{1\}$ is an n-fold positive implicative filter.  A totaly
ordered   n-fold positive implicative  residuated lattice is a
locally finite.
\end{cor}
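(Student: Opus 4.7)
The plan is to derive both statements directly from Prop. \ref{crch0}, using the earlier characterizations of when $\{1\}$ is an n-fold positive implicative filter. The two claims are linked: the second will reduce to the first via Prop. \ref{prch}.

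First I would establish the initial statement. Assume $L$ is totally ordered and that $\{1\}$ is an n-fold positive implicative filter of $L$. Applying Prop. \ref{crch0} to the filter $F=\{1\}$, we conclude that $\{1\}$ is a maximal filter of $L$ and that the quotient $L/\{1\}$ is a locally finite residuated lattice. The short remaining step is to identify $L/\{1\}$ with $L$: by the definition of the congruence $\equiv_{F}$, $x\equiv_{\{1\}}y$ means $x\rightarrow y=1$ and $y\rightarrow x=1$, which by Prop. \ref{pro}(1) is equivalent to $x\le y$ and $y\le x$, hence $x=y$. Thus $L/\{1\}\cong L$ as residuated lattices, and local finiteness transfers back to $L$.

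For the second statement, assume $L$ is a totally ordered n-fold positive implicative residuated lattice. By the equivalence $(i)\Longleftrightarrow (iii)$ of Prop. \ref{prch}, $\{1\}$ is an n-fold positive implicative filter of $L$. The first part of the corollary now applies and yields that $L$ is locally finite.

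The only possible obstacle is the identification $L/\{1\}\cong L$; this is immediate once the definition of $\equiv_{F}$ is unfolded and antisymmetry is used, so no genuine difficulty arises. Everything else is a direct quotation of Prop. \ref{crch0} and Prop. \ref{prch}.
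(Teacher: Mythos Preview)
Your proposal is correct and follows exactly the route the paper indicates: the corollary is stated in the paper as an immediate consequence of Prop.~\ref{crch0}, and your argument makes that deduction explicit by taking $F=\{1\}$, identifying $L/\{1\}$ with $L$, and invoking Prop.~\ref{prch} for the second claim. One could equally well bypass the quotient identification by using directly that $\{1\}$ is maximal (from Prop.~\ref{crch0}) together with Prop.~\ref{max}(ii), which for $F=\{1\}$ reads: $x\neq 1$ implies $\overline{x^{m}}=1$, i.e., $x^{m}=0$; but this is essentially the same observation unpacked differently.
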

\begin{prop}\label{prcha}
A filter $F$ of $L$  is an  n-fold positive implicative filter if
and only $L/F$ is an n-fold positive implicative residuated
lattice.
\end{prop}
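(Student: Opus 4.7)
The plan is to mirror the proof of Corollary \ref{ch6}, reducing the equivalence to two simple equational characterizations that have already been established: Corollary \ref{lien11} characterizing n-fold positive implicative filters as those proper filters $F$ satisfying $x\vee\overline{x^{n}}\in F$ for every $x\in L$, and Corollary \ref{prch34} characterizing n-fold positive implicative residuated lattices as those satisfying $\overline{y^{n}}\vee y=1$ for every $y$. Once the two characterizations are in hand, the proof is a direct translation through the quotient congruence $\equiv_{F}$: for any $z\in L$ we have $z\in F$ if and only if $z/F=1/F$, and the quotient operations given in the Preliminaries satisfy $(a\vee b)/F=a/F\vee b/F$ and $(a\rightarrow b)/F=(a/F)\rightarrow (b/F)$, so in particular $\overline{x}/F=\overline{x/F}$ and $(x/F)^{n}=x^{n}/F$.

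For the forward direction I would assume that $F$ is an n-fold positive implicative filter. By Corollary \ref{lien11} this gives $x\vee\overline{x^{n}}\in F$ for every $x\in L$, so $(x\vee\overline{x^{n}})/F=1/F$. Using compatibility of the operations with the congruence, this rewrites as $(x/F)\vee\overline{(x/F)^{n}}=1/F$ in $L/F$. Since this holds for every $x/F\in L/F$, Corollary \ref{prch34} yields that $L/F$ is an n-fold positive implicative residuated lattice.

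For the converse, assuming $L/F$ is an n-fold positive implicative residuated lattice, Corollary \ref{prch34} applied in $L/F$ gives $\overline{(y/F)^{n}}\vee(y/F)=1/F$ for every $y/F\in L/F$. Translating back through the quotient operations, this is $(\overline{y^{n}}\vee y)/F=1/F$, equivalently $\overline{y^{n}}\vee y\in F$ for every $y\in L$. Corollary \ref{lien11} then forces $F$ to be an n-fold positive implicative filter, closing the equivalence.

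There is no real obstacle in the argument; the only technical point one must be careful about is that the operations $x\mapsto\overline{x}$ and $x\mapsto x^{n}$ descend correctly to the quotient. This is immediate once we recall that $\overline{x}=x\rightarrow 0$ and use iteratively the compatibility of $\otimes$ and $\rightarrow$ with $\equiv_{F}$ stated in the Preliminaries, so the proposition follows at once from the two corollaries already proved.
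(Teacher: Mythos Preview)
Your argument is correct. Both you and the paper prove the result by choosing a characterization of ``n-fold positive implicative'' that passes cleanly through the quotient map, but you pick different characterizations. The paper works with the implication-based criterion of Proposition~\ref{propo}(iii), namely that $\overline{x^{n}}\longrightarrow x\in F$ forces $x\in F$, and then invokes Proposition~\ref{prch} to identify this with $\{1/F\}$ being an n-fold positive implicative filter of $L/F$. You instead use the equational (n-fold boolean) characterizations of Corollaries~\ref{lien11} and~\ref{prch34}, reducing everything to the single identity $x\vee\overline{x^{n}}\in F$ and its quotient counterpart $(x/F)\vee\overline{(x/F)^{n}}=1/F$. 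Your route is marginally slicker, since an equational condition transfers to $L/F$ without having to chase an implication in both directions; the paper's route has the small advantage of staying closer to the primitive definition via Proposition~\ref{propo}. Either way the content is the same, and your remark that $\overline{\,\cdot\,}$ and $(\cdot)^{n}$ descend to the quotient is exactly the point that makes both arguments go through.
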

\begin{proof}
 Suppose that $F$  is an  n-fold positive implicative filter. Let  $x\in
 L$ be such that  $\overline{(x/F)^{n}}\longrightarrow x/F\in
\{1/F\}$, then  $(\overline{x^{n}}\longrightarrow x)/F=
\overline{(x/F)^{n}}\longrightarrow x/F=1/F$. So
$(\overline{x^{n}}\longrightarrow x)\in F$, by the fact that $F$
is an  n-fold positive implicative filter, we have  $x\in F$.
Hence $x/F\in \{1/F\}$, by Prop. \ref{propo}, $\{1/F\}$ is an
n-fold positive implicative filter of $L/F$, by Prop. \ref{prch},
$L/F$ is an n-fold positive implicative residuated lattice.\\
Suppose conversely that $L/F$ is an n-fold positive implicative
residuated lattice. Let   $x\in L$ be such that
$\overline{x^{n}}\longrightarrow x\in F$.  We have
$(\overline{x^{n}}\longrightarrow x)/F=1/F$, this implies
$\overline{(x/F)^{n}}\longrightarrow x/F\in \{1/F\}$. Since $L/F$
is an n-fold positive implicative residuated lattice, by Prop.
\ref{prch},  $\{1/F\}$ is an n-fold positive implicative filter of
$L/F$. Hence $x/F\in \{1/F\}$ or equivalently  $x\in F$.  By Prop.
\ref{propo},  $F$  is an  n-fold positive implicative filter of
$L$.
\end{proof}

The following examples shows that the notion of  n-fold positive
implicative residuated lattices exist.
\begin{ex}
Let  $L$ be a residuated lattice from Example \ref{expp} and
$n\geq 2$. Since $ F_{3}= \{1,a,b,c\}$ is an  n-fold positive
implicative filter, by Prop. \ref{prcha},  $L/F_{3}$ is an n-fold
positive implicative residuated lattice.
\end{ex}
Follows from Corollary \ref{lien11}, we have the following
proposition.
\begin{prop}\label{lien12}In any residuated lattices, the concepts
of  n-fold boolean filters and   n-fold positive implicative
filters coincide.
\end{prop}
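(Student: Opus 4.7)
The plan is to show that the two notions are defined by literally the same condition (modulo the triviality of the improper filter), and then invoke Corollary \ref{lien11} to close the loop.

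First I would unwind the definitions. A filter $F$ is called an n-fold boolean filter precisely when $x\vee\overline{x^{n}}\in F$ for every $x\in L$. On the other hand, Corollary \ref{lien11} has already established that a proper filter $F$ is an n-fold positive implicative filter if and only if $x\vee\overline{x^{n}}\in F$ for every $x\in L$. So the two defining membership conditions are identical in the proper case, and both sides are trivially true when $F=L$. Hence the core of the proof is just a one-line appeal to Corollary \ref{lien11}.

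More explicitly, the forward direction would go: assume $F$ is an n-fold positive implicative filter. By Proposition \ref{propo1}, $F$ is a filter; if $F$ is proper, Corollary \ref{lien11} yields $x\vee\overline{x^{n}}\in F$ for all $x\in L$, which is exactly the definition of an n-fold boolean filter; if $F=L$ this is immediate. Conversely, assume $F$ is an n-fold boolean filter. Since $1=x\vee\overline{x^{n}}\cdot\mathbf{1}$... rather, since the defining condition $x\vee\overline{x^{n}}\in F$ holds for all $x$ (in particular for $x=1$, giving $1\in F$), $F$ is nonempty and, being a filter, either proper or $L$; in the proper case the other direction of Corollary \ref{lien11} supplies that $F$ is n-fold positive implicative, and in the improper case this is again immediate.

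There is essentially no obstacle here, since all the nontrivial work has already been done inside Corollary \ref{lien11} (where one uses inequality (14) from Proposition \ref{pro}, namely $x\vee y\le(x\to y)\to y$, to pass between $\overline{x^{n}}\to x\in F$ and $x\vee\overline{x^{n}}\in F$). The only care needed is to note that the n-fold boolean filter definition is assumed to be applied to a filter in the first place, so that Corollary \ref{lien11} is genuinely applicable; once that is observed, the proposition is a direct corollary.
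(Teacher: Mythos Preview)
Your proposal is correct and matches the paper's approach exactly: the paper simply states that Proposition \ref{lien12} follows from Corollary \ref{lien11}, and you have spelled out precisely why (the defining conditions coincide, with the improper case being trivial). The only cosmetic issue is the stray half-sentence ``Since $1=x\vee\overline{x^{n}}\cdot\mathbf{1}$\ldots\ rather,'' which should be cleaned up, but the argument itself is sound.
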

\begin{defi}
$L$  is an n-fold boolean residuated lattice  if it  satisfies the
following conditions:\\
 $x\vee \overline{x^{n}} =1$ for all $x\in L$.
In particular 1-fold boolean residuated lattices are boolean
algebra.
\end{defi}
It is easy to observe that:
\begin{rem}\label{lien16}
A residuated lattice $L$ is an n-fold boolean residuated lattice
if and only if $\{1\}$  is an n-fold boolean filter of $L$ if and
only if $L$  is an n-fold  positive implicative residuated
lattice.
\end{rem}

\section{N-Fold Normal Filter in Residuated Lattices}
In \cite{B10}, the authors introduce the notion of n-fold normal
filter in BL-algebra. This motives us to introduce the notion of
n-fold normal filter in residuated lattices.
\begin{prop}\label{prochr}Let $n\geq 1$.
 The following conditions are
equivalent for any filter $F$:
\begin{itemize}
\item[(i)]For all  $x, y, z \in L$,
 if $ z \longrightarrow ((y^{n}\longrightarrow x)\longrightarrow x)\in F$
 and $ z\in F$, then $ (x\longrightarrow y)\longrightarrow y\in F$.
\item[(ii)]For all  $x, y \in L$, $(y^{n}\longrightarrow x)\longrightarrow x\in
F$ implies $(x\longrightarrow y)\longrightarrow y\in F$.
\end{itemize}
\end{prop}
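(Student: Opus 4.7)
The plan is to prove this as a routine equivalence, using only the deductive-system description of filters (Fact 1) together with the identity $1\longrightarrow u=u$ from Prop.\ \ref{pro}.

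For the direction $(i)\Longrightarrow(ii)$, I would specialize the hypothesis by taking $z=1$. Since $1\in F$ automatically, and since $1\longrightarrow((y^{n}\longrightarrow x)\longrightarrow x)=(y^{n}\longrightarrow x)\longrightarrow x$ by equation (6) of Prop.\ \ref{pro}, the assumption of (ii) that $(y^{n}\longrightarrow x)\longrightarrow x\in F$ is exactly the assumption needed to invoke (i) with $z=1$. The conclusion $(x\longrightarrow y)\longrightarrow y\in F$ is then immediate.

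For the converse $(ii)\Longrightarrow(i)$, I would use that $F$ is a filter, hence a deductive system (Fact 1). Assuming $z\longrightarrow((y^{n}\longrightarrow x)\longrightarrow x)\in F$ and $z\in F$, modus ponens in the deductive system $F$ gives $(y^{n}\longrightarrow x)\longrightarrow x\in F$. Applying (ii) to this element then yields $(x\longrightarrow y)\longrightarrow y\in F$, which is the conclusion of (i).

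There is no real obstacle here; the content is purely bookkeeping. The only subtlety worth naming is that the extra variable $z$ in condition (i) is redundant precisely because filters are closed under modus ponens, so (i) is just a ``parametrized'' reformulation of (ii) that trivially reduces to it upon taking $z=1$.
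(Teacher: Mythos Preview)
Your proposal is correct and follows essentially the same approach as the paper: for $(i)\Rightarrow(ii)$ both specialize to $z=1$ using $1\longrightarrow u=u$, and for $(ii)\Rightarrow(i)$ both use the deductive-system property of filters to obtain $(y^{n}\longrightarrow x)\longrightarrow x\in F$ before applying (ii).
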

\begin{proof}
Let $F$ be a filter which satisfying (i) . Assume that
$(y^{n}\longrightarrow x)\longrightarrow x \in F$. We have
$(y^{n}\longrightarrow x)\longrightarrow x = 1 \longrightarrow
((y^{n}\longrightarrow x)\longrightarrow x)\in F$ and $1\in F$, by
the fact that $F$ satisfying (i), we obtain $(x\longrightarrow
y)\longrightarrow y\in F$. Conversely, let  $ z \longrightarrow
((y^{n}\longrightarrow x)\longrightarrow x)\in F$
 and $ z\in F$, Since  $F$ is a filter, we have $(y^{n}\longrightarrow x)\longrightarrow x \in
 F$. By hypothesis, we obtain $(x\longrightarrow y)\longrightarrow y\in
 F$.
\end{proof}
\begin{defi}A filter
$F$  is an n-fold normal filter  if it satisfies one of the
conditions of Prop. \ref{prochr}.   In particular 1-fold normal
filters are normal  filters.
\end{defi}

\begin{ex}\label{de7}Let $n\geq 1$.
Let $L$ be a residuated lattice from Example \ref{exp}. Simple
computations proves that $ F_{3}= \{1,a,b,c\}$ is an  n-fold normal
filter.
\end{ex}
The following example shows that filters may not be n-fold normal in
general.
\begin{ex}\label{de8}
Let $L$ be a residuated lattice from Example \ref{exp}.  $ F_{2}=
\{1,b\}$ is not an  n-fold normal filter since
$(a^{n}\longrightarrow 0)\longrightarrow 0=1\in \{1,b\}$ but $
[(0\longrightarrow a)\longrightarrow a]= a\notin\{1,b\}$.
\end{ex}
\begin{prop}\label{propo4}
  n-fold positive implicative filters are n-fold normal filters.
\end{prop}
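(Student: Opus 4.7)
The plan is to unpack the hypothesis $(y^{n}\longrightarrow x)\longrightarrow x\in F$ step by step into $\overline{A^{n}}\longrightarrow A\in F$, where $A=(x\longrightarrow y)\longrightarrow y$ is the target. Once that is achieved, Prop.~\ref{propo}(iii) closes the argument since $F$ is n-fold positive implicative.

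First I would replace the $y^{n}\longrightarrow x$ by the "worst case" $\overline{y^{n}}$. Since $0\leq x$, (4) yields $\overline{y^{n}}=y^{n}\longrightarrow 0\leq y^{n}\longrightarrow x$, and applying (4) once more gives $(y^{n}\longrightarrow x)\longrightarrow x\leq\overline{y^{n}}\longrightarrow x$. As $F$ is a filter, this yields $\overline{y^{n}}\longrightarrow x\in F$.

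Next I would introduce the target $A$ by invoking (15): taking the roles of $x,y,z$ in (15) as $\overline{y^{n}},x,y$ respectively, we get
\[
\overline{y^{n}}\longrightarrow x\;\leq\;(x\longrightarrow y)\longrightarrow(\overline{y^{n}}\longrightarrow y).
\]
Then the swap (3) rewrites the right-hand side as $\overline{y^{n}}\longrightarrow\bigl((x\longrightarrow y)\longrightarrow y\bigr)=\overline{y^{n}}\longrightarrow A$, so $\overline{y^{n}}\longrightarrow A\in F$.

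Finally I would replace $\overline{y^{n}}$ by $\overline{A^{n}}$ on the left. Since $y\otimes(x\longrightarrow y)\leq y$ gives $y\leq A$, monotonicity of $(\cdot)^{n}$ and of the negation (via (8)) produce $\overline{A^{n}}\leq\overline{y^{n}}$, and (4) then gives $\overline{y^{n}}\longrightarrow A\leq\overline{A^{n}}\longrightarrow A$. Hence $\overline{A^{n}}\longrightarrow A\in F$, so by Prop.~\ref{propo}(iii) we conclude $A=(x\longrightarrow y)\longrightarrow y\in F$, and by Prop.~\ref{prochr}(ii), $F$ is n-fold normal.

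There is no real obstacle; the only delicate point is choosing the correct antecedent-monotonicity chain so that both $x$ inside $y^{n}\longrightarrow x$ and the outer $x$ get replaced consistently. In particular it is crucial to notice that $y\leq A$ already forces $\overline{A^{n}}\leq\overline{y^{n}}$, which is what finally makes Prop.~\ref{propo}(iii) applicable.
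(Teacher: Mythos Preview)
Your proof is correct. The strategy --- massage the hypothesis into the form $\overline{A^{n}}\longrightarrow A\in F$ for the target $A=(x\longrightarrow y)\longrightarrow y$ and then invoke the positive-implicative criterion --- is exactly the paper's idea, and each of your three monotonicity steps is valid.

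There is a small difference in execution worth noting. You first pass from $y^{n}\longrightarrow x$ down to $\overline{y^{n}}$, then use (15) and (3) to reach $\overline{y^{n}}\longrightarrow A$, and finally apply Prop.~\ref{propo}(iii). The paper avoids the detour through negation: it keeps the term $x^{n}\longrightarrow y$ (their $x,y$ are your $y,x$), uses $y\leq A$ on the consequent to obtain $(x^{n}\longrightarrow y)\longrightarrow A\in F$, then uses $x\leq A$ on the base to reach $(A^{n}\longrightarrow y)\longrightarrow A\in F$, and closes with the more general form Prop.~\ref{propo}(ii). This saves one step (your Step~1 is unnecessary if one uses (ii) instead of (iii)), but both routes rest on the same key observation that the target $A$ dominates both $x$ and $y$, which is what makes the positive-implicative criterion applicable with $A$ in place of the original variable.
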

\begin{proof}
 Assume that $F$ is an n-fold positive implicative filter an let
$(x^{n}\longrightarrow y)\longrightarrow y \in F$.\\ We must  show
that $(y\longrightarrow x)\longrightarrow x \in F$. \\  Since
 $y\leq (y\longrightarrow x)\longrightarrow x$, by (4)
 Prop.\ref{pro}, we obtain $(x^{n}\longrightarrow y)\longrightarrow y \leq
 (x^{n}\longrightarrow y)\longrightarrow ((y\longrightarrow x)\longrightarrow x )$.\\
 From this and the fact that $F$ is a filter, we obtain \
$ (x^{n}\longrightarrow y)\longrightarrow ((y\longrightarrow
x)\longrightarrow x)\in F$.\\ Since $x\leq (y\longrightarrow
x)\longrightarrow x$, by (4)Prop.\ref{pro} \ we have $x^{n}\leq
((y\longrightarrow x)\longrightarrow x)^{n}$, hence\\
$(x^{n}\longrightarrow y)\longrightarrow
  [(y\longrightarrow x)\longrightarrow x ]\leq
  ([(y\longrightarrow x)\longrightarrow x]^{n}\longrightarrow y)\longrightarrow
  [(y\longrightarrow x)\longrightarrow x ] $.\\ Since $ (x^{n}\longrightarrow y)\longrightarrow [(y\longrightarrow
x)\longrightarrow x]\in F$,
   by the fact that $F$ is a filter, we obtain \
    $([(y\longrightarrow x)\longrightarrow x]^{n}\longrightarrow y)\longrightarrow
  [(y\longrightarrow x)\longrightarrow x ]\in F$. \ From this and  the fact that $F$ is
   an n-fold positive implicative
  filter, we obtain the result by apply Prop.\ref{propo}.
\end{proof}
By the following example, we show that the converse of Prop.
\ref{propo4}  is not true in general.
\begin{ex}
Let $ L$ be a lattice from Example \ref{exim3}
 $\{1\}$ is an 1-fold normal filter but $\{1\}$ is not an 1-fold positive implicative
 filter.
\end{ex}
\section{n-fold fantastic Filter in Residuated Lattices}
\begin{defi}Let $n\geq 1$.
 $F$  is an n-fold fantastic filter $L$ if it satisfies the
following conditions:
\begin{itemize}
\item[(i)]$1\in F$
\item[(ii)]For all  $x, y \in L$,
$y\longrightarrow x\in F$ implies $ [(x^{n}\longrightarrow
y)\longrightarrow y]\longrightarrow x\in F$.
\end{itemize}
In particular 1-fold fantastic  filters are fantastic
filters.\cite{B6}
\end{defi}
\begin{ex}Let $n\geq 1$.
Let $L$ be a residuated lattice from Example \ref{expob2}. It is
easy to check that $\{1\}$ is an n-fold fantastic filter.
\end{ex}

The following example shows that filters may not be n-fold
fantastic in general.

\begin{ex}\label{exf3}
Let  $L$ be a residuated lattice from Example \ref{expob1}.
$\{1\}$ is not an n-fold fantastic filter since $a\longrightarrow
c=1\in \{1\}$ but $ [(c^{n}\longrightarrow a)\longrightarrow
a]\longrightarrow c= c\notin\{1\}$.
\end{ex}
\begin{prop}\label{lp}Let $n\geq 1$.
  n-fold positive
implicative filters  are  n-fold fantastic filters.
\end{prop}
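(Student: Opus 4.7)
The plan is to assume that $F$ is an $n$-fold positive implicative filter and that $y\longrightarrow x\in F$, and then exhibit a single element of $F$ whose product with $y\longrightarrow x$ lies below $((x^n\longrightarrow y)\longrightarrow y)\longrightarrow x$. The key ingredient will be Corollary~\ref{lien11}, which guarantees $x\vee\overline{x^n}\in F$ for every $x\in L$; this will serve as the auxiliary filter element that, after one application of residuation, delivers the desired conclusion.

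Setting $v=(x^n\longrightarrow y)\longrightarrow y$, the crux of the argument is the pointwise inequality
\[
v\otimes(x\vee\overline{x^n})\otimes(y\longrightarrow x)\leq x,
\]
which by residuation (L-3) is equivalent to $(x\vee\overline{x^n})\otimes(y\longrightarrow x)\leq v\longrightarrow x$. I would establish it in two stages. First, using distributivity (11) to expand $v\otimes(x\vee\overline{x^n})=(v\otimes x)\vee(v\otimes\overline{x^n})$, the summand $v\otimes x$ is trivially $\leq x$, while for $v\otimes\overline{x^n}$ one notes that $\overline{x^n}=x^n\longrightarrow 0\leq x^n\longrightarrow y$ by (4), whence $v\otimes\overline{x^n}\leq v\otimes(x^n\longrightarrow y)\leq y$ by (5). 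Hence $v\otimes(x\vee\overline{x^n})\leq x\vee y$. A second distribution, together with $x\otimes(y\longrightarrow x)\leq x$ and $y\otimes(y\longrightarrow x)\leq x$ from (5), then yields $(x\vee y)\otimes(y\longrightarrow x)\leq x$, completing the inequality.

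The conclusion is then immediate: $x\vee\overline{x^n}\in F$ by Corollary~\ref{lien11} and $y\longrightarrow x\in F$ by hypothesis, so (F1) places their product in $F$, and (F2) lifts the inequality $(x\vee\overline{x^n})\otimes(y\longrightarrow x)\leq v\longrightarrow x$ to $v\longrightarrow x\in F$, which is exactly $((x^n\longrightarrow y)\longrightarrow y)\longrightarrow x\in F$. The main obstacle is locating the right auxiliary element of $F$: without the $n$-fold positive implicative hypothesis in the form of Corollary~\ref{lien11}, no single filter element seems strong enough to ``collapse'' $v$ down to the level of $y$ through the $\otimes$-action, so the recognition of $x\vee\overline{x^n}$ as the decisive lever, combined with the two-step distributive reduction, is the heart of the argument.
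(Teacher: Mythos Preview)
Your proof is correct. Every step checks: distributivity (11) gives $v\otimes(x\vee\overline{x^{n}})=(v\otimes x)\vee(v\otimes\overline{x^{n}})$; the bound $v\otimes x\leq x$ follows from $a\otimes b\leq a\wedge b$; the bound $v\otimes\overline{x^{n}}\leq y$ follows from $\overline{x^{n}}\leq x^{n}\longrightarrow y$ and (5); and the second distributive step $(x\vee y)\otimes(y\longrightarrow x)\leq x$ is immediate. Residuation and the filter axioms then finish the argument exactly as you describe.

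Your route, however, is genuinely different from the paper's. The paper does \emph{not} invoke Corollary~\ref{lien11}; instead it works with the characterisation in Proposition~\ref{propo}(ii), namely that $(z^{n}\longrightarrow w)\longrightarrow z\in F$ forces $z\in F$. Setting $t=((x^{n}\longrightarrow y)\longrightarrow y)\longrightarrow x$, the paper builds a chain of inequalities culminating in
\[
y\longrightarrow x\;\leq\;\bigl(t^{n}\longrightarrow y\bigr)\longrightarrow t,
\]
so that $y\longrightarrow x\in F$ yields $(t^{n}\longrightarrow y)\longrightarrow t\in F$, and then Proposition~\ref{propo}(ii) applied with $z=t$ gives $t\in F$ directly. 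Your argument trades this self-referential trick for the boolean-type element $x\vee\overline{x^{n}}$, which lets you stay entirely at the level of $\otimes$-inequalities and avoid the longer chain (a)--(h) of the paper. The price is that you rely on Corollary~\ref{lien11}, whose proof already contains a small ``closure'' argument; the paper's version is more self-contained but considerably less transparent. Both approaches ultimately exploit the same strength of the $n$-fold positive implicative hypothesis, just packaged differently.
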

\begin{proof}Assume that $F$ is an n-fold positive
implicative filter. Let  $x, y\in L$  be such that
$y\longrightarrow x\in F$.\\ By Prop. \ref{pro}, we have:\\
 $ x \leq [((x^{n}\longrightarrow y)\longrightarrow y)\longrightarrow x]$. \ (a) \\
Then by Prop. \ref{pro}, we also have:\\
$ x^{n} \leq [((x^{n}\longrightarrow y)\longrightarrow y)\longrightarrow x]^{n}$. \ (b)\\
By (b) and Prop. \ref{pro}, we get,  $ (x^{n}\longrightarrow y) \geq
[((x^{n}\longrightarrow y)\longrightarrow y)\longrightarrow x]^{n}\longrightarrow y$. \ (c)\\
By  Prop. \ref{pro}, we get, $ y\longrightarrow x\leq  ((x^{n}\longrightarrow y)\longrightarrow y)\longrightarrow((x^{n}\longrightarrow y)\longrightarrow x)$.\ (d)\\
We also have :\\ $((x^{n}\longrightarrow y)\longrightarrow y)\longrightarrow((x^{n}\longrightarrow y)\longrightarrow x)=((x^{n}\longrightarrow y)\longrightarrow(((x^{n}\longrightarrow y)\longrightarrow y)\longrightarrow x)$.\ (e)\\
So, by (d)and(e), we get,  $ y\longrightarrow x\leq ((x^{n}\longrightarrow y)\longrightarrow(((x^{n}\longrightarrow y)\longrightarrow y)\longrightarrow x)$.\ (f)\\
By (c) and Prop. \ref{pro}, we get,\\
 $[((x^{n}\longrightarrow y)\longrightarrow[((x^{n}\longrightarrow
y)\longrightarrow y] \longrightarrow x]\leq
[[((x^{n}\longrightarrow y)\longrightarrow y)\longrightarrow
x]^{n}\longrightarrow y]\longrightarrow [[((x^{n}\longrightarrow
y)\longrightarrow y] \longrightarrow x]]$    (g)\\
By (f) and (g) , we obtain,\\
$ y\longrightarrow x\leq [[((x^{n}\longrightarrow
y)\longrightarrow y)\longrightarrow x]^{n}\longrightarrow
y]\longrightarrow [[((x^{n}\longrightarrow
y)\longrightarrow y] \longrightarrow x]]$    (h)\\
Since $F$ is a filter (see Prop. \ref{propo1}), by (h) and the
fact that $y\longrightarrow x\in F$, we get :\\
$[[((x^{n}\longrightarrow y)\longrightarrow y)\longrightarrow
x]^{n}\longrightarrow y]\longrightarrow [[((x^{n}\longrightarrow
y)\longrightarrow y] \longrightarrow x]]\in F$.  (w)\\ By (w),
Prop. \ref{propo} and the fact that  $F$ is an n-fold positive
implicative filter, we obtain $[[((x^{n}\longrightarrow
y)\longrightarrow y] \longrightarrow x]]\in F$. \\
Hence $F$ is an n-fold fantastic filter.\\
\end{proof}

The following example shows that n-fold fantastic filters may not be
n-fold positive implicative in general.

\begin{ex}Let $n\geq 1$.
Let $L$ be a residuated lattice from Example \ref{expob2}. It is
easy to check that $\{1\}$ is an n-fold fantastic filter, but not
n-fold positive implicative  filter since
$\overline{a^{n}}\longrightarrow a \in F$ and $a\notin F$.
\end{ex}
\begin{prop}\label{lpp}Let $n\geq 1$.
  n-fold fantastic filters are  n-fold normal filters.
\end{prop}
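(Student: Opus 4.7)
The plan is to exploit the fact that the antecedent of the n-fold fantastic condition, $y\to x\in F$, is trivially available whenever $y\le x$, by passing to the join $z=x\vee y$. Working with the relabelled form of Prop.~\ref{prochr}(ii) (swap $x$ and $y$), I assume $(x^{n}\to y)\to y\in F$ and aim to derive $(y\to x)\to x\in F$.

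First I would set $z=x\vee y$. Since $y\le z$, we have $y\to z=1\in F$, so applying the n-fold fantastic property to the pair (outer element $z$, inner element $y$) yields
\[
[(z^{n}\to y)\to y]\to z\in F.
\]
Next, $x\le z$ together with monotonicity of $\otimes$ gives $x^{n}\le z^{n}$, and two applications of antitonicity~(4) from Prop.~\ref{pro} promote this to $(x^{n}\to y)\to y\le (z^{n}\to y)\to y$. The hypothesis combined with filter closure then forces $(z^{n}\to y)\to y\in F$.

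Combining the last two memberships by modus ponens (valid in any filter by Fact~1) gives $z=x\vee y\in F$. Finally, inequality~(14) of Prop.~\ref{pro} yields $x\vee y\le (y\to x)\to x$, and one last upward closure in $F$ produces $(y\to x)\to x\in F$, which is precisely the n-fold normal condition.

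The one real choice in the proof is the substitution into the fantastic condition: the naive attempts ($x'=x,\;y'=y$, or swapped) do not use the available hypothesis, whereas replacing $x$ by the join $x\vee y$ simultaneously trivialises the fantastic antecedent and keeps the resulting consequent $(z^{n}\to y)\to y$ above the element $(x^{n}\to y)\to y$ that the hypothesis puts into $F$. Everything else is routine filter closure together with the standard inequalities of Prop.~\ref{pro}.
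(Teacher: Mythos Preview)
Your proof is correct, and it is genuinely different from the paper's argument. The paper sets $t=(y\to x)\to x$, the target element itself, and applies the fantastic condition to the pair $(t,\;x^{n}\to y)$: from $(x^{n}\to y)\to t\in F$ it obtains $[(t^{n}\to(x^{n}\to y))\to(x^{n}\to y)]\to t\in F$, and then needs the commutation $t^{n}\to(x^{n}\to y)=x^{n}\to(t^{n}\to y)$ together with the inequality $(t^{n}\to y)\to y\le (x^{n}\to(t^{n}\to y))\to(x^{n}\to y)$ before a final modus ponens yields $t\in F$. Your choice of the join $z=x\vee y$ instead of $t$ is a cleaner substitution: it trivialises the fantastic antecedent ($y\to z=1$), keeps the substitution in the inner slot as the simple element $y$ rather than $x^{n}\to y$, and eliminates the commutation step entirely. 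The only extra cost is one appeal to inequality~(14) at the very end, which is cheap. So your route is shorter and uses fewer of the identities from Prop.~\ref{pro}, while the paper's route has the minor conceptual advantage of working directly with the target element throughout.
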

\begin{proof}Assume that $F$ is an n-fold fantastic filter. Let  $x, y\in L$  be such that
$(x^{n}\longrightarrow y)\longrightarrow y \in F$ and $t=(y\longrightarrow x)\longrightarrow x $.  We must show that $t \in F$.  \\ By Prop. \ref{pro}, we have:\\
 $ y \leq (y\longrightarrow x)\longrightarrow x$,
   so  $(x^{n}\longrightarrow y)\longrightarrow y \leq [(x^{n}\longrightarrow y)\longrightarrow [(y\longrightarrow x)\longrightarrow x]]$,
  that is  $(x^{n}\longrightarrow y)\longrightarrow y \leq [(x^{n}\longrightarrow y)\longrightarrow t]$. \ (a) \\
  Since $(x^{n}\longrightarrow y)\longrightarrow y \in F$,
by (a) an the fact that $F$ is a filter, it follows that
$(x^{n}\longrightarrow y)\longrightarrow t\in F$. \ (b) \\
By (b) and the fact that  $F$ is an n-fold fantastic filter,  we
get that $[(t^{n}\longrightarrow(x^{n}\longrightarrow
y))\longrightarrow (x^{n}\longrightarrow y)]\longrightarrow t \in
F$. \ (c) \\
By Prop. \ref{pro},   we also have
$t^{n}\longrightarrow(x^{n}\longrightarrow y)=
x^{n}\longrightarrow(t^{n}\longrightarrow y)$,  so
$(t^{n}\longrightarrow(x^{n}\longrightarrow y))\longrightarrow
(x^{n}\longrightarrow y )=
(x^{n}\longrightarrow(t^{n}\longrightarrow y))\longrightarrow
(x^{n}\longrightarrow y )$ and then \\
$[(x^{n}\longrightarrow(t^{n}\longrightarrow y))\longrightarrow
(x^{n}\longrightarrow y)]\longrightarrow t \in
F$. \ (d) \\
On the other hand, by Prop. \ref{pro},   we also have
$(t^{n}\longrightarrow y)\longrightarrow y \leq
(x^{n}\longrightarrow(t^{n}\longrightarrow y))\longrightarrow
(x^{n}\longrightarrow y)$. \ (e) \\ Since $x\leq t$,  it follows
that  $(x^{n}\longrightarrow y)\longrightarrow y \leq
(t^{n}\longrightarrow y)\longrightarrow y $.   \ (f) \\  Since $F$
is a filter and  $(x^{n}\longrightarrow y)\longrightarrow y \in
F$,  by (f) we obtain  $(t^{n}\longrightarrow y)\longrightarrow y
\in F$. \ (g) \\ Since $F$ is a filter,  by (g) and (e) it follows
that  $ (x^{n}\longrightarrow(t^{n}\longrightarrow
y))\longrightarrow (x^{n}\longrightarrow y)  \in F$.  \ (h) \\
Since $F$ is a filter,  by (h) and (d) it follows that $t \in
F$.\\ Hence $F$ is an n-fold normal filter.
\end{proof}

\begin{lem}\label{l1}
For all  $x, y \in L$, we have:\\
$[(x^{n}\longrightarrow x^{2n})\otimes (x^{2n}\longrightarrow
y)]\leq x^{n}\longrightarrow y$.
\end{lem}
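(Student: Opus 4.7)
The plan is to deduce this from the general transitivity property of implication: for any $a,b,c\in L$, one has $(a\longrightarrow b)\otimes (b\longrightarrow c)\leq a\longrightarrow c$. Specializing with $a=x^{n}$, $b=x^{2n}$, $c=y$ gives exactly the statement of the lemma, so all the work lies in establishing this general inequality.

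To prove the general inequality, I would invoke residuation (L-3), which reduces the problem to showing that $a\otimes [(a\longrightarrow b)\otimes (b\longrightarrow c)]\leq c$. Using commutativity and associativity of $\otimes$ (from L-2), I rewrite the left-hand side as $[a\otimes (a\longrightarrow b)]\otimes (b\longrightarrow c)$. By Prop.~\ref{pro}(5) we have $a\otimes (a\longrightarrow b)\leq b$, and then by the monotonicity of $\otimes$ stated in Prop.~\ref{pro}(8), it follows that $[a\otimes (a\longrightarrow b)]\otimes (b\longrightarrow c)\leq b\otimes (b\longrightarrow c)$. Applying Prop.~\ref{pro}(5) one more time gives $b\otimes (b\longrightarrow c)\leq c$. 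Chaining these inequalities and appealing again to residuation yields $(a\longrightarrow b)\otimes (b\longrightarrow c)\leq a\longrightarrow c$.

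There is essentially no obstacle here: the argument is just two applications of $u\otimes (u\longrightarrow v)\leq v$ glued together by monotonicity of $\otimes$ and wrapped with residuation. The only thing to be careful about is the bookkeeping of associativity/commutativity when inserting the factor $(b\longrightarrow c)$ on the appropriate side, but since $\otimes$ is a commutative monoid by L-2 this is automatic. Finally, substituting $a=x^{n}$, $b=x^{2n}$, $c=y$ delivers $[(x^{n}\longrightarrow x^{2n})\otimes (x^{2n}\longrightarrow y)]\leq x^{n}\longrightarrow y$, which is the claim of Lemma~\ref{l1}.
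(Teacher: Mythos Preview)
Your proof is correct and takes essentially the same route as the paper: both arguments amount to the transitivity inequality $(a\rightarrow b)\otimes(b\rightarrow c)\leq a\rightarrow c$, specialized to $a=x^{n}$, $b=x^{2n}$, $c=y$. The only cosmetic difference is that the paper starts from item (15) of Prop.~\ref{pro} (which is the residuated form of this transitivity) and then applies monotonicity of $\otimes$ together with (5), whereas you residuate first and apply (5) twice; the content is identical.
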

\begin{proof}Let  $x, y\in L$, by Prop.\ref{pro}  we have the
following:
\begin{itemize}
\item[(1)]$x^{n}\longrightarrow x^{2n}\leq [(x^{2n}\longrightarrow y)\longrightarrow (x^{n}\longrightarrow y)]$
\item[(2)] By  (1) we  have : $[(x^{2n}\longrightarrow y)\otimes(x^{n}\longrightarrow x^{2n})]\leq [(x^{2n}\longrightarrow y)\otimes((x^{2n}\longrightarrow y)\longrightarrow (x^{n}\longrightarrow y))]$
\item[(3)]$[(x^{2n}\longrightarrow y)\otimes((x^{2n}\longrightarrow
y)\longrightarrow (x^{n}\longrightarrow y))]\leq
x^{n}\longrightarrow y$
\end{itemize}
 By  (2) and (3) we  have :
$[(x^{n}\longrightarrow x^{2n})\otimes (x^{2n}\longrightarrow
y)]\leq x^{n}\longrightarrow y$.
\end{proof}

\begin{prop}\label{l3}Let $n\geq 1$.
Let $F$ a filter of $L$. If $F$  is   n-fold fantastic  and n-fold
implicative filter, then $F$  is an n-fold positive implicative
filter.
\end{prop}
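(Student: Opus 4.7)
The plan is to verify the characterization of n-fold positive implicative filters from Prop.~\ref{propo}(iii): it suffices to show that whenever $\overline{x^{n}}\longrightarrow x \in F$, we must have $x\in F$. So I would start by assuming $\overline{x^{n}}\longrightarrow x \in F$ and aim to produce $x\in F$ using the two hypotheses on $F$ (n-fold fantastic and n-fold implicative).

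The key idea is to feed the element $y := \overline{x^{n}}$ into the n-fold fantastic property. Since $\overline{x^{n}}\longrightarrow x \in F$, the fantastic property immediately yields
\[
\bigl[(x^{n}\longrightarrow \overline{x^{n}})\longrightarrow \overline{x^{n}}\bigr]\longrightarrow x \;\in\; F.
\]
The inner term simplifies cleanly: by Prop.~\ref{pro}(2), $x^{n}\longrightarrow \overline{x^{n}} = x^{n}\longrightarrow(x^{n}\longrightarrow 0) = x^{2n}\longrightarrow 0 = \overline{x^{2n}}$. So the assumption becomes $[\overline{x^{2n}}\longrightarrow \overline{x^{n}}]\longrightarrow x \in F$.

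Now the n-fold implicative hypothesis enters through Prop.~\ref{ch1}(ii): $x^{n}\longrightarrow x^{2n} \in F$. The standard contraposition inequality $a\longrightarrow b \le \bar b\longrightarrow \bar a$ (obtainable from Prop.~\ref{pro}(6) together with (3)–(4), via $a\longrightarrow b \le a\longrightarrow \overline{\bar b} = \bar b\longrightarrow \bar a$) gives $x^{n}\longrightarrow x^{2n} \le \overline{x^{2n}}\longrightarrow \overline{x^{n}}$. Because $F$ is upward closed, this places $\overline{x^{2n}}\longrightarrow \overline{x^{n}} \in F$. Combined with $[\overline{x^{2n}}\longrightarrow \overline{x^{n}}]\longrightarrow x \in F$ and the fact that $F$ is a deductive system, modus ponens delivers $x\in F$, as required.

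The main obstacle is spotting the right instantiation of the fantastic property (using $y = \overline{x^{n}}$) so that the resulting double-negation term can be cancelled precisely by the n-fold implicative condition $x^{n}\longrightarrow x^{2n}\in F$. Once that match is found the rest is routine filter bookkeeping using Prop.~\ref{pro}; no inductive argument or auxiliary lemma beyond the ones already stated is needed.
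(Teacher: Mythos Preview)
Your argument is correct. The instantiation $y=\overline{x^{n}}$ in the fantastic condition is exactly right, the simplification $x^{n}\longrightarrow\overline{x^{n}}=\overline{x^{2n}}$ is valid by Prop.~\ref{pro}(2), and the contraposition step $x^{n}\longrightarrow x^{2n}\le\overline{x^{2n}}\longrightarrow\overline{x^{n}}$ is an instance of Prop.~\ref{pro}(15) with $z=0$ (your own derivation via (3),(4),(6) is also fine). Modus ponens then gives $x\in F$, so Prop.~\ref{propo}(iii) applies.

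The paper takes a slightly different route: it verifies the stronger characterization Prop.~\ref{propo}(ii), i.e.\ it starts from $(x^{n}\longrightarrow y)\longrightarrow x\in F$ for an arbitrary $y$, applies the fantastic property with $x^{n}\longrightarrow y$ in the role of $y$, and then invokes the auxiliary Lemma~\ref{l1} (that $(x^{n}\longrightarrow x^{2n})\otimes(x^{2n}\longrightarrow y)\le x^{n}\longrightarrow y$) to reduce the resulting expression to $(x^{n}\longrightarrow x^{2n})\longrightarrow x\in F$, after which the implicative hypothesis and modus ponens finish. Your approach is more economical: by targeting condition~(iii) (the special case $y=0$) you avoid Lemma~\ref{l1} entirely and replace it with the one-line contraposition inequality. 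The paper's version, on the other hand, establishes condition~(ii) directly and keeps the argument uniform with the general $y$; the cost is the extra lemma. Both are perfectly valid, and yours is the shorter path.
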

\begin{proof}
Let  $x, y\in L$  be such that $(x^{n}\longrightarrow
y)\longrightarrow x\in F$. Assume that $F$  is both  n-fold
fantastic filter and n-fold implicative filter.\\ Since $F$  is a
n-fold fantastic filter, by the fact that $(x^{n}\longrightarrow
y)\longrightarrow x\in F$, we have : $ [(x^{n}\longrightarrow
(x^{n}\longrightarrow y))\longrightarrow (x^{n}\longrightarrow
y)]\longrightarrow x\in F$.\\
By Lemma \ref{l1} and residuation we get : $(x^{n}\longrightarrow
x^{2n})\leq  (x^{2n}\longrightarrow y)
\longrightarrow(x^{n}\longrightarrow y)$. So,
$(x^{n}\longrightarrow x^{2n})\leq [x^{n}\longrightarrow
(x^{n}\longrightarrow y)] \longrightarrow(x^{n}\longrightarrow
y)$. Hence,  $[(x^{n}\longrightarrow x^{2n})\longrightarrow x]\geq
[[x^{n}\longrightarrow (x^{n}\longrightarrow y)]
\longrightarrow(x^{n}\longrightarrow y)]\longrightarrow x$. Since
$F$ is a filter, by the fact that $ [(x^{n}\longrightarrow
(x^{n}\longrightarrow y))\longrightarrow (x^{n}\longrightarrow
y)]\longrightarrow x\in F$, we have :  $(x^{n}\longrightarrow
x^{2n})\longrightarrow x \in F$. Since $F$  is and n-fold
implicative filter, by  Prop. \ref{ch1}, we get, $x\in F$. By
Prop. \ref{propo},  $F$  is an n-fold positive implicative filter.

\end{proof}
Follows from Prop. \ref{lp}, Prop. \ref{l3}, Prop. \ref{lien1}, it
is easy to show the following theorem.
\begin{thm}\label{l4}Let $n\geq 1$.
Let $F$ a filter. $F$  is an n-fold positive implicative filter of
 $L$ if and only if   $F$ is  n-fold fantastic  and n-fold implicative
filter of $L$.
\end{thm}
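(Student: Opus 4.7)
The plan is to package Theorem \ref{l4} as a direct corollary of the three previously established propositions, with essentially no new work beyond a careful statement of which implication comes from which result.

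First I would handle the forward direction. Assume $F$ is an n-fold positive implicative filter. By Prop. \ref{lp}, every n-fold positive implicative filter is an n-fold fantastic filter, so $F$ is n-fold fantastic. By Theorem \ref{lien1} (labeled \ref{lien1} in the excerpt), every n-fold positive implicative filter is an n-fold implicative filter, so $F$ is also n-fold implicative. This takes care of one direction with no calculation at all.

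For the converse, assume $F$ is simultaneously n-fold fantastic and n-fold implicative. Then Prop. \ref{l3} applies verbatim and yields that $F$ is an n-fold positive implicative filter. Combining the two directions gives the equivalence claimed in Theorem \ref{l4}.

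There is no real obstacle here, since all the substantive work (in particular, the delicate residuated-lattice manipulations behind Prop. \ref{l3} and the lengthy chain of inequalities in Theorem \ref{lien1}) has already been done. The only thing to be careful about is making sure the labels cited match the intended propositions and that the hypothesis ``$F$ is a filter'' from the statement is indeed implicit in the ``n-fold positive implicative filter'' hypothesis used in the forward direction (which is guaranteed by Prop. \ref{propo1}, so no extra assumption is needed).
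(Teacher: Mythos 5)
Your proposal is correct and coincides with the paper's argument: the forward direction follows from Prop.~\ref{lp} and Theorem~\ref{lien1}, and the converse is exactly Prop.~\ref{l3}, which is precisely how the paper deduces Theorem~\ref{l4}.
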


\begin{defi}\label{defrch}
 $L$ is said to be   n-fold fantastic residuated lattice if for all  $x, y \in L$,
$y\longrightarrow x=  [(x^{n}\longrightarrow y)\longrightarrow
y]\longrightarrow x$.
\end{defi}
The following example shows that the notion of  n-fold fantastic
residuated lattice exist.
\begin{ex}\label{exf1}Let $n\geq 1$.
Let  $L$ be a residuated lattice from Example \ref{expob2}. It is
easy to check that  $L$ is an   n-fold fantastic residuated
lattice.
\end{ex}
The following example shows that residuated lattices may not be
n-fold fantastic in general.
\begin{ex}\label{exf2}
Let  $L$ be a residuated lattice from Example \ref{expob1}. $L$ is
not an n-fold fantastic residuated lattice since $a\longrightarrow
c=1\neq c=  [(c^{n}\longrightarrow a)\longrightarrow
a]\longrightarrow c$.
\end{ex}

The following proposition gives a characterization of n-fold
fantastic residuated lattice.
\begin{prop}\label{l2}
$L$ is an   n-fold fantastic residuated lattice if and only if the
inequality  $ (x^{n}\longrightarrow y)\longrightarrow y\leq
(y\longrightarrow x) \longrightarrow x$ holds for all $x, y \in
L$.
\end{prop}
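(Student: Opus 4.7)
The plan is to prove both directions by reducing the definition of $n$-fold fantastic residuated lattice to a single inequality, using the observation that the reverse inequality is automatic, and then converting that inequality into the form stated in the proposition via residuation.

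First I would note that, in any residuated lattice, the inequality
\[
[(x^{n}\longrightarrow y)\longrightarrow y]\longrightarrow x \le (y\longrightarrow x)\longrightarrow x
\]
is trivial\,---\,wait, rather the useful one is $[(x^{n}\longrightarrow y)\longrightarrow y]\longrightarrow x\le y\longrightarrow x$. Indeed, by Prop.\ \ref{pro}(6) we have $y\le (x^{n}\longrightarrow y)\longrightarrow y$, and then Prop.\ \ref{pro}(4) (antitonicity of $\longrightarrow$ in the first argument) gives
\[
[(x^{n}\longrightarrow y)\longrightarrow y]\longrightarrow x \le y\longrightarrow x .
\]
Consequently, the defining equality of Definition \ref{defrch} is equivalent to the single inequality $y\longrightarrow x\le [(x^{n}\longrightarrow y)\longrightarrow y]\longrightarrow x$ for all $x,y\in L$.

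Next I would apply the residuation property (L-3) together with the commutativity of $\otimes$ to transform this inequality into the one appearing in the statement. Using $a\le b\longrightarrow c\Longleftrightarrow a\otimes b\le c\Longleftrightarrow b\otimes a\le c\Longleftrightarrow b\le a\longrightarrow c$, we get
\[
y\longrightarrow x\le [(x^{n}\longrightarrow y)\longrightarrow y]\longrightarrow x
\;\;\Longleftrightarrow\;\;
(x^{n}\longrightarrow y)\longrightarrow y \le (y\longrightarrow x)\longrightarrow x.
\]

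Putting the two steps together yields both implications at once. For the forward direction, assume $L$ is $n$-fold fantastic; then the defining equality implies $y\longrightarrow x\le [(x^{n}\longrightarrow y)\longrightarrow y]\longrightarrow x$, hence by the residuation equivalence $(x^{n}\longrightarrow y)\longrightarrow y\le (y\longrightarrow x)\longrightarrow x$. For the converse, assume the stated inequality; the residuation equivalence gives $y\longrightarrow x\le [(x^{n}\longrightarrow y)\longrightarrow y]\longrightarrow x$, and combining with the always-valid reverse inequality $[(x^{n}\longrightarrow y)\longrightarrow y]\longrightarrow x\le y\longrightarrow x$ recovers the defining equality. There is no serious obstacle here; the only subtlety worth stressing in the write-up is the use of commutativity of $\otimes$ when swapping the two sides of the residuation, which is what produces the symmetric-looking equivalence between $y\longrightarrow x\le [(x^{n}\longrightarrow y)\longrightarrow y]\longrightarrow x$ and the inequality of the proposition.
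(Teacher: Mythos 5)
Your proposal is correct and takes essentially the same route as the paper: both proofs rest on the equivalence, via residuation/exchange (Prop.\ \ref{pro}(2)--(3)), between $y\longrightarrow x\leq[(x^{n}\longrightarrow y)\longrightarrow y]\longrightarrow x$ and the stated inequality $(x^{n}\longrightarrow y)\longrightarrow y\leq(y\longrightarrow x)\longrightarrow x$, together with the automatic reverse inequality $[(x^{n}\longrightarrow y)\longrightarrow y]\longrightarrow x\leq y\longrightarrow x$ obtained from $y\leq(x^{n}\longrightarrow y)\longrightarrow y$ and antitonicity. The only difference is cosmetic: you invoke L-3 and commutativity of $\otimes$ directly where the paper uses the exchange identity, which is itself a consequence of those facts.
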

\begin{proof}
Assume that $L$ is an n-fold fantastic  residuated lattice. Let
$x, y \in L$.  We have $ [(x^{n}\longrightarrow y)\longrightarrow
y]\longrightarrow [(y\longrightarrow x) \longrightarrow x]=
(y\longrightarrow x)\longrightarrow[[(x^{n}\longrightarrow
y)\longrightarrow y]\longrightarrow x]$.  (a)\\ By hypothesis $
y\longrightarrow x=[[(x^{n}\longrightarrow y)\longrightarrow
y]\longrightarrow x]$. Hence $ (y\longrightarrow
x)\longrightarrow[[(x^{n}\longrightarrow y)\longrightarrow
y]\longrightarrow x]=1$.  (b)\\ By (a) and  (b), we get $
[(x^{n}\longrightarrow y)\longrightarrow y]\longrightarrow
[(y\longrightarrow x) \longrightarrow x]=1$ or equivalently  $
[(x^{n}\longrightarrow y)\longrightarrow y]\leq
[(y\longrightarrow x) \longrightarrow x]$.\\
Suppose conversely that  the inequality  $ (x^{n}\longrightarrow
y)\longrightarrow y\leq (y\longrightarrow x) \longrightarrow x$
holds for all $x, y \in L$.  Then $ (y\longrightarrow
x)\longrightarrow[[(x^{n}\longrightarrow y)\longrightarrow
y]\longrightarrow x]=[(x^{n}\longrightarrow y)\longrightarrow
y]\longrightarrow [(y\longrightarrow x) \longrightarrow x] $.  (e)\\
Since $ (x^{n}\longrightarrow y)\longrightarrow y\leq
(y\longrightarrow x) \longrightarrow x$, by Prop. \ref{pro}, we
get  $[(x^{n}\longrightarrow y)\longrightarrow
y]\longrightarrow[(x^{n}\longrightarrow y)\longrightarrow y]
\leq[(x^{n}\longrightarrow y)\longrightarrow y]\longrightarrow
[(y\longrightarrow x) \longrightarrow x]$, that is $1
\leq[(x^{n}\longrightarrow y)\longrightarrow y]\longrightarrow
[(y\longrightarrow x) \longrightarrow x]$ or equivalently
$[(x^{n}\longrightarrow y)\longrightarrow y]\longrightarrow
[(y\longrightarrow x) \longrightarrow x]= 1$.  (f)\\  By (e) and
(f), its follows that $ (y\longrightarrow
x)\leq[[(x^{n}\longrightarrow y)\longrightarrow y]\longrightarrow
x]$.   (g)\\Since  $ y\leq (x^{n}\longrightarrow y)\longrightarrow
y$, we also get by Prop. \ref{pro}, $ y\longrightarrow
x\geq[[(x^{n}\longrightarrow y)\longrightarrow y]\longrightarrow
x]$.   (h)\\
From (h) and (g), we obtain $ y\longrightarrow x=
[[(x^{n}\longrightarrow y)\longrightarrow y]\longrightarrow x]$.
Hence  $L$ is an n-fold fantastic  residuated lattice.
\end{proof}
\begin{prop}\label{prch7}
 The following conditions are
equivalent for any filter $F$:
\begin{itemize}
\item[(i)]$L$ is an n-fold fantastic  residuated lattice.
\item[(ii)] Every filter $F$ of $L$ is an n-fold fantastic
filter of $L$
\item[(iii)] $\{1\}$ is an n-fold fantastic filter of $L$.
\end{itemize}
\end{prop}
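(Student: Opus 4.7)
The plan is to prove the cyclic chain $(i)\Longrightarrow (ii)\Longrightarrow (iii)\Longrightarrow (i)$, and use Prop.~\ref{l2} to convert statement $(iii)$, which is a statement about $\{1\}$ (i.e. about equalities of the form $v=1$), into the global inequality of Prop.~\ref{l2}.

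For $(i)\Longrightarrow (ii)$, let $F$ be an arbitrary filter and suppose $y\longrightarrow x\in F$. By Definition \ref{defrch}, $y\longrightarrow x=[(x^{n}\longrightarrow y)\longrightarrow y]\longrightarrow x$, so the right-hand side lies in $F$, which is exactly the n-fold fantastic condition. The implication $(ii)\Longrightarrow (iii)$ is immediate since $\{1\}$ is a filter.

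The nontrivial step is $(iii)\Longrightarrow (i)$, and this is where I expect the main obstacle: knowing the n-fold fantastic condition only for $\{1\}$ means we can only exploit hypotheses of the form $y\longrightarrow x=1$, i.e.\ $y\leq x$, whereas we want to conclude something about arbitrary $x,y$. The trick I plan to use is to replace $x$ by the larger element $u:=(y\longrightarrow x)\longrightarrow x$, for which one has $y\leq u$ automatically (by Prop.~\ref{pro}(14)). Then $y\longrightarrow u=1\in\{1\}$, so the hypothesis applied to the pair $(u,y)$ yields $[(u^{n}\longrightarrow y)\longrightarrow y]\longrightarrow u=1$, that is, $(u^{n}\longrightarrow y)\longrightarrow y\leq u$. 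Since $x\leq u$ gives $x^{n}\leq u^{n}$, and then two successive applications of the antitonicity/isotonicity rules of Prop.~\ref{pro}(8) give
\[
(x^{n}\longrightarrow y)\longrightarrow y\leq (u^{n}\longrightarrow y)\longrightarrow y\leq u=(y\longrightarrow x)\longrightarrow x.
\]
As $x,y$ were arbitrary, this is precisely the inequality characterizing n-fold fantastic residuated lattices in Prop.~\ref{l2}, so $L$ is an n-fold fantastic residuated lattice, completing the cycle.
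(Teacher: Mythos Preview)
Your proof is correct and follows essentially the same approach as the paper's own proof: the paper also proves the cycle $(i)\Rightarrow(ii)\Rightarrow(iii)\Rightarrow(i)$, and for the nontrivial step $(iii)\Rightarrow(i)$ it introduces the same auxiliary element $t=(y\longrightarrow x)\longrightarrow x$ (your $u$), uses $y\leq t$ to apply the $\{1\}$-fantastic hypothesis to the pair $(t,y)$, then combines $(t^{n}\longrightarrow y)\longrightarrow y\leq t$ with the monotonicity consequence of $x\leq t$ to obtain the inequality of Prop.~\ref{l2}. Your write-up is in fact slightly more explicit about which parts of Prop.~\ref{pro} are being invoked.
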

\begin{proof}
$(i)\longrightarrow (ii)$ : Follows from Definition \ref{defrch} \\
$(ii)\longrightarrow (iii)$ : Follows from the fact that $\{1\}$
is a filter of $L$.\\
 $(iii)\longrightarrow (i)$: Assume that $\{1\}$ is
an n-fold fantastic filter.\\Let $x, y \in L$ and $t=
(y\longrightarrow x) \longrightarrow x$. By Prop. \ref{pro},
$y\leq t$. So $y\longrightarrow t =1$ and by the hypothesis, we
have
$[(t^{n}\longrightarrow y)\longrightarrow y]\longrightarrow t=1$,\\
that is $[(t^{n}\longrightarrow y)\longrightarrow y]\leq t$.  (w)\\
On the other hand, $x\leq t$ implies $x^{n}\leq t^{n}$, hence
$[(x^{n}\longrightarrow y)\longrightarrow y]\leq
(t^{n}\longrightarrow y)\longrightarrow y $. (z)\\
By (z) and (w), it follows that $[(x^{n}\longrightarrow
y)\longrightarrow y]\leq t=   (y\longrightarrow x) \longrightarrow
x$. Hence by Prop.  \ref{l2}, $L$ is an n-fold fantastic
residuated lattice.
\end{proof}
Combine  Prop.\ref{prch7}, Prop.\ref{prch}, Prop.\ref{ch5}
 and Theorem \ref{l4}, we have the following result:
\begin{cor}\label{l90}Let $n\geq 1$.
 $L$  is an n-fold positive implicative residuated lattice if and
only if   $L$ is  n-fold fantastic residuated lattice and n-fold
implicative residuated lattice.
\end{cor}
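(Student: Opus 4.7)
The plan is to bootstrap from the filter-level theorem to the residuated-lattice-level theorem by specializing everything to the filter $\{1\}$. The key observation is that Prop.~\ref{ch5}, Prop.~\ref{prch}, and Prop.~\ref{prch7} each assert, for the three notions at hand, that $L$ is ``n-fold (implicative / positive implicative / fantastic) residuated lattice'' if and only if $\{1\}$ is the corresponding type of n-fold filter. Meanwhile Theorem~\ref{l4} already gives, at the filter level, the equivalence ``n-fold positive implicative filter $\Leftrightarrow$ n-fold fantastic filter and n-fold implicative filter.'' Combining these four facts with $F=\{1\}$ should give the desired corollary with essentially no further computation.

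More concretely, I would argue as follows. For the forward direction, assume $L$ is an n-fold positive implicative residuated lattice. By the equivalence (i)$\Leftrightarrow$(iii) in Prop.~\ref{prch}, the filter $\{1\}$ is an n-fold positive implicative filter. Applying Theorem~\ref{l4} to $F=\{1\}$, we conclude that $\{1\}$ is simultaneously an n-fold fantastic filter and an n-fold implicative filter. Feeding these two facts back through Prop.~\ref{prch7}(iii)$\Rightarrow$(i) and Prop.~\ref{ch5}(iii)$\Rightarrow$(i) respectively yields that $L$ is an n-fold fantastic residuated lattice and an n-fold implicative residuated lattice.

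For the converse, assume $L$ is both n-fold fantastic and n-fold implicative as a residuated lattice. Then Prop.~\ref{prch7} and Prop.~\ref{ch5} give that $\{1\}$ is both an n-fold fantastic filter and an n-fold implicative filter of $L$. Again by Theorem~\ref{l4}, applied in its ``if'' direction to $F=\{1\}$, it follows that $\{1\}$ is an n-fold positive implicative filter. Finally Prop.~\ref{prch}(iii)$\Rightarrow$(i) translates this back into the statement that $L$ is an n-fold positive implicative residuated lattice, completing the equivalence.

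I do not foresee any genuine obstacle here: the corollary is an assembly of previously established equivalences, and all the real work lies in Theorem~\ref{l4} (which already handled the nontrivial filter-level implications in both directions). The only subtlety worth double-checking is that all four cited results are indeed stated for the same fixed integer $n\geq 1$ and that $\{1\}$ qualifies as a filter in each of them, both of which hold without further hypothesis on $L$.
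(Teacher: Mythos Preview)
Your proposal is correct and follows exactly the approach the paper indicates: the paper simply states that the corollary follows by combining Prop.~\ref{prch7}, Prop.~\ref{prch}, Prop.~\ref{ch5}, and Theorem~\ref{l4}, and your argument is precisely the spelled-out version of that combination via the filter $\{1\}$.
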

The following corollary gives a characterization of n-fold
fantastic filter in  residuated lattice.
\begin{cor}\label{cor1}Let $n\geq 1$.
Let $F$ be a filter of $L$.  Then   $F$ is an n-fold fantastic
filter if and only  $L/F$ is is an n-fold fantastic  residuated
lattice.
\end{cor}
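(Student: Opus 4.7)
The plan is to lift the problem to the quotient $L/F$ and then quote Proposition \ref{prch7}. Recall that by Proposition \ref{prch7} applied to the residuated lattice $L/F$, the structure $L/F$ is an n-fold fantastic residuated lattice if and only if $\{1/F\}$ is an n-fold fantastic filter of $L/F$. So the corollary will follow once we establish the intermediate equivalence
\[
F \text{ is an n-fold fantastic filter of } L \iff \{1/F\} \text{ is an n-fold fantastic filter of } L/F.
\]

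To set up this equivalence I would use the standard fact that, under the congruence $\equiv_F$ of Definition~1, one has $a/F = 1/F$ if and only if $a \in F$ (indeed $a\equiv_F 1$ means $a\to 1,\,1\to a\in F$, and the first belongs to $F$ trivially while the second equals $a$). Combined with the fact that the projection $L\to L/F$ is a residuated lattice homomorphism (so it commutes with $\otimes$, $\to$, and hence with $x\mapsto x^n$), this gives the two translations
\[
y/F\longrightarrow x/F\in\{1/F\}\iff y\longrightarrow x\in F,
\]
\[
[((x/F)^{n}\!\to y/F)\to y/F]\to x/F\in\{1/F\}\iff [(x^{n}\!\to y)\to y]\to x\in F.
\]

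With these translations the defining implication of \emph{n-fold fantastic filter} for $F$ in $L$ is literally the defining implication of \emph{n-fold fantastic filter} for $\{1/F\}$ in $L/F$, quantified over the same elements (the quotient map is surjective so the universal quantifier over $L/F$ corresponds to the universal quantifier over representatives in $L$). Combining this equivalence with Proposition \ref{prch7} applied to $L/F$ yields the corollary in both directions.

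No step is really an obstacle; the only thing to watch is to be explicit that $(x/F)^{n}=x^{n}/F$ and that the quotient map preserves $\to$, so that the translation of the two conditions is purely formal. This is exactly the same bookkeeping that was carried out in Corollary \ref{ch6} and Proposition \ref{prcha}, and the proof can be written in essentially the same style.
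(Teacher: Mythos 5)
Your proposal is correct and follows essentially the same route as the paper: both reduce the statement to the equivalence between ``$F$ is an n-fold fantastic filter of $L$'' and ``$\{1/F\}$ is an n-fold fantastic filter of $L/F$'' via the translation $a\in F \iff a/F=1/F$ together with compatibility of the quotient map with $\to$ and powers, and then invoke Proposition \ref{prch7} applied to $L/F$. No substantive difference from the paper's own argument.
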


\begin{proof}
Let $F$ be a filter of $L$. Assume that $F$ is an n-fold fantastic
filter. We show that  $L/F$ is is an n-fold fantastic residuated
lattice. Let $x,y \in L$ be such that $y/F \longrightarrow x/F \in
\{1/F\}$, then  $(y\longrightarrow x)/F =1/F$ or equivalently
$y\longrightarrow x\in F$. Since $F$ is an n-fold fantastic
filter, we get  $  [(x^{n}\longrightarrow y)\longrightarrow
y]\longrightarrow x \in F $ or equivalently  $
([(x^{n}\longrightarrow y)\longrightarrow y]\longrightarrow x )/F=
1/F $, so $([((x/F)^{n}\longrightarrow y/F)\longrightarrow
y/F]\longrightarrow x/F )\in  \{1/F\}$. Hence $\{1/F\}$ is an
n-fold fantastic filter of $L/F$, therefore  by Prop. \ref{prch7},
$L/F$ is is an n-fold fantastic  residuated lattice.\\ Conversely,
assume that $L/F$ is is an n-fold fantastic  residuated lattice.
Let $x,y \in L$ be such that $y\longrightarrow x \in F$ then
$(y\longrightarrow x)/F =1/F$ or equivalently $y/F\longrightarrow
x/F \in \{1/F\}$. Since $L/F$ is is an n-fold fantastic residuated
lattice,  by Prop. \ref{prch7}, $\{1/F\}$ is an n-fold fantastic
filter of $L/F$. From this and the fact that $y/F\longrightarrow
x/F \in \{1/F\}$, we have: $([((x/F)^{n}\longrightarrow
y/F)\longrightarrow y/F]\longrightarrow x/F )\in  \{1/F\}$ or
equivalently  $ ([(x^{n}\longrightarrow y)\longrightarrow
y]\longrightarrow x )/F= 1/F $, so $  [(x^{n}\longrightarrow
y)\longrightarrow y]\longrightarrow x \in F $. Hence  $F$ is an
n-fold fantastic filter.
\end{proof}
The extension theorem of  n-fold fantastic   filters is obtained
from the following result:
\begin{thm}\label{f2}Let $n\geq 1$.
Let $F_{1}$ and $F_{2}$ two filters of $L$  such that
$F_{1}\subseteq F_{2}$.  If   $F_{1}$ is an n-fold fantastic
filter, then so is $F_{2}$.
\end{thm}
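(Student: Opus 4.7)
The plan is to deduce the extension property from Corollary \ref{cor1}, which characterizes $n$-fold fantastic filters of $L$ as exactly those filters whose quotient $L/F$ is an $n$-fold fantastic residuated lattice. The filter-theoretic extension thus reduces to a standard algebraic fact about homomorphic images of residuated lattices.

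First, I would apply Corollary \ref{cor1} to $F_{1}$: since $F_{1}$ is $n$-fold fantastic, the quotient residuated lattice $L/F_{1}$ satisfies the identity of Definition \ref{defrch}, namely $y\longrightarrow x = [(x^{n}\longrightarrow y)\longrightarrow y]\longrightarrow x$ for all its elements. Next, I would use the inclusion $F_{1}\subseteq F_{2}$ to produce a canonical map between the two quotients: if $x\equiv_{F_{1}} y$, that is $x\longrightarrow y,\ y\longrightarrow x\in F_{1}$, then both elements lie in $F_{2}$, so $x\equiv_{F_{2}} y$. This gives a well-defined surjection $\pi:L/F_{1}\longrightarrow L/F_{2}$, $x/F_{1}\mapsto x/F_{2}$, and the explicit description of $\wedge,\vee,\otimes,\longrightarrow$ on the quotients recalled in the preliminaries makes it immediate that $\pi$ is a residuated-lattice homomorphism.

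Finally, since the $n$-fold fantastic property is given by an equation, it is preserved by surjective homomorphisms: applying $\pi$ to the identity holding in $L/F_{1}$ yields the same identity in $L/F_{2}$, so $L/F_{2}$ is an $n$-fold fantastic residuated lattice. Reapplying Corollary \ref{cor1}, this time to $F_{2}$, then yields that $F_{2}$ is an $n$-fold fantastic filter. No genuine obstacle is expected; the only point that deserves care is verifying that $\pi$ is well-defined and a homomorphism, and that the equational form of Definition \ref{defrch} really does transport along $\pi$. Should an element-level argument be preferred instead, one can start from $y\longrightarrow x\in F_{2}$, exploit $y\leq (y\longrightarrow x)\longrightarrow x$ together with identity (17) to reduce to elements in $F_{1}$, and then feed the result of the $n$-fold fantastic property of $F_{1}$ back into $F_{2}$ via the inclusion; but the quotient route is shorter and parallels the characterization already established.
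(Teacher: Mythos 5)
Your proposal is correct, but it finishes differently from the paper. Both arguments start the same way: apply Corollary \ref{cor1} to $F_{1}$ to conclude that $L/F_{1}$ satisfies the identity of Definition \ref{defrch}. The paper then immediately descends back to elements of $L$: the identity in $L/F_{1}$ gives $(y\longrightarrow x)\longrightarrow\bigl([(x^{n}\longrightarrow y)\longrightarrow y]\longrightarrow x\bigr)\in F_{1}\subseteq F_{2}$, and since $F_{2}$ is a filter (deductive system) and $y\longrightarrow x\in F_{2}$, modus ponens yields $[(x^{n}\longrightarrow y)\longrightarrow y]\longrightarrow x\in F_{2}$, verifying the defining condition for $F_{2}$ directly; only the forward direction of Corollary \ref{cor1} is used. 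You instead stay at the quotient level: the canonical map $\pi:L/F_{1}\longrightarrow L/F_{2}$ is a well-defined surjective residuated-lattice homomorphism because $F_{1}\subseteq F_{2}$, the $n$-fold fantastic condition is an identity in the language (with $x^{n}$ a term), identities pass to homomorphic images, and then the converse direction of Corollary \ref{cor1} applied to $F_{2}$ concludes. Your route is more structural and buys a general principle: any property of filters characterized by the quotient lying in an equationally defined class of residuated lattices is automatically inherited by larger filters (the same template re-proves Theorem \ref{b1} and the implicative case via Corollaries \ref{ch6} and \ref{prcha}). The paper's route is more elementary and self-contained, needing no appeal to preservation of identities under surjections nor to the converse half of Corollary \ref{cor1}; your closing element-level sketch is essentially that argument. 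Both are valid proofs.
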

\begin{proof}

Let $x,y\in L$ be such that $y\longrightarrow x\in F_{2}$. Since
$F_{1}$ is an n-fold fantastic  filter, by Corollary. \ref{cor1},
$L/F_{1}$ is an n-fold positive implicative residuated lattice. So
$([((x/F_{1})^{n}\longrightarrow y/F_{1})\longrightarrow
y/F_{1}]\longrightarrow x/F_{1} )= y/F_{1}\longrightarrow x/F_{1}
$, so $(y\longrightarrow x)\longrightarrow([(x^{n}\longrightarrow
y)\longrightarrow y]\longrightarrow x )\in F_{1} $, so
$(y\longrightarrow x)\longrightarrow([(x^{n}\longrightarrow
y)\longrightarrow y]\longrightarrow x )\in F_{2} $. Since  $F_{2}$
is a filter of $L$, by the fact that $y\longrightarrow x\in F_{2}$
and $(y\longrightarrow x)\longrightarrow([(x^{n}\longrightarrow
y)\longrightarrow y]\longrightarrow x )\in F_{2} $, we get
$([(x^{n}\longrightarrow y)\longrightarrow y]\longrightarrow x
)\in F_{2} $.  Hence  $F_{2}$ is an n-fold fantastic filter.
\end{proof}

\section{n-fold obstinate Filters in Residuated Lattices}

\begin{defi}\label{defob1}
A filter  $F$  is an n-fold obstinate filter of $L$  if it
satisfies the following conditions for any $n\geq 1$:
\begin{itemize}
\item[(i)]$0\notin F$
\item[(ii)]For all  $x, y \in L$,
$x,y\notin F$ imply $ x^{n}\longrightarrow y\in F$ and  $
y^{n}\longrightarrow x \in F$
\end{itemize}
In particular 1-fold obstinate  filters are obstinate
filters.\cite{B14}
\end{defi}

The following proposition gives a characterization of  n-fold
obstinate filter  of $ L$.

\begin{prop}\label{propob1}For any $n\geq 1$,
 the following conditions are
equivalent for any filter $F$:
\begin{itemize}
\item[(i)]$F$ is an n-fold obstinate filter
\item[(ii)]For all  $x \in L$, if $x\notin F$ then     there exist $m\geq
1$ such that   $(\overline{x^{n}})^{m}\in F$
\end{itemize}
\end{prop}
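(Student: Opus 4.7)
The plan is to prove the equivalence in both directions; each reduces to a direct application of the defining property of an $n$-fold obstinate filter together with upward closure of filters, so there is essentially no obstacle.

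For $(i)\Rightarrow(ii)$, I assume $F$ is an $n$-fold obstinate filter, so in particular $0\notin F$ by the first clause of the definition. Given any $x\notin F$, I apply condition (ii) of Definition 7.1 to the pair $(x,0)$, since both elements lie outside $F$. This immediately gives $x^{n}\longrightarrow 0 = \overline{x^{n}}\in F$, so taking $m=1$ suffices.

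For $(ii)\Rightarrow(i)$, I first note that the statement is implicitly for proper filters (if $F=L$, then (ii) holds vacuously while clause (i) of the obstinate definition fails), so I may assume $F$ is proper and hence $0\notin F$ immediately. To verify the second obstinate condition, let $x,y\notin F$. By (ii) applied to $x$, there exists $m\geq 1$ with $(\overline{x^{n}})^{m}\in F$. Since $a^{m}\leq a$ in any residuated lattice (iterating $a\otimes a\leq a\wedge a\leq a$ from Prop.~\ref{pro}(1)), upward closure of $F$ collapses this to $\overline{x^{n}}\in F$. From $0\leq y$ together with the monotonicity clause of Prop.~\ref{pro}(4), I get $\overline{x^{n}}=x^{n}\longrightarrow 0 \leq x^{n}\longrightarrow y$, and a second application of upward closure produces $x^{n}\longrightarrow y\in F$. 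The symmetric argument applied to $y$ delivers $y^{n}\longrightarrow x\in F$.

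The only delicate point in the write-up is the collapse of the quantifier ``there exists $m\geq 1$'' in (ii) down to the single statement $\overline{x^{n}}\in F$, which is what makes the choice $m=1$ sufficient in the forward direction. This parallels the structure of Prop.~\ref{max} for maximal filters and is the reason the equivalence is so clean.
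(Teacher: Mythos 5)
Your proof is correct and takes essentially the same route as the paper's: the forward direction sets $y=0$ in the obstinate condition to get $\overline{x^{n}}\in F$ with $m=1$, and the converse uses the chain $(\overline{x^{n}})^{m}\leq\overline{x^{n}}\leq x^{n}\longrightarrow y$ together with upward closure of $F$ (and symmetrically for $y^{n}\longrightarrow x$). Your explicit observation that the converse needs $F$ proper so that $0\notin F$ is a small point of care the paper's proof passes over silently, but it does not alter the argument.
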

\begin{proof}
$(i)\longrightarrow (ii)$  :    Suppose that  $F$ is n-fold
obstinate filter of $L$. Let $x \in L$ be such that $ x\notin F$.
By setting $y=0$ in Definition \ref{defob1}, we get: $
x^{n}\longrightarrow 0\in F$. Hence for $m=1$, we have:
$(\overline{x^{n}})^{m}\in F$\\
$(ii)\longrightarrow (i)$: Conversely,  let $x,y\notin F$, we show
that  $ x^{n}\longrightarrow y\in F$ and  $ y^{n}\longrightarrow x
\in F$. By the hypothesis, there are $ m_{1}, m_{2}\geq 1$ such
that $(\overline{x^{n}})^{m_{1}},(\overline{y^{n}})^{m_{2}} \in
F$.\\ By Prop. \ref{pro}(8), we have:\\
$(\overline{x^{n}})^{m_{1}}\leq \overline{x^{n}}\leq
x^{n}\longrightarrow y $   \ (a) \\ and
$(\overline{y^{n}})^{m_{2}}\leq \overline{x}^{n}\leq
y^{n}\longrightarrow x$   \ (b) \\
Since  $F$ is a filter, by (a) and (b), we get   $
x^{n}\longrightarrow y\in F$ and  $ y^{n}\longrightarrow x \in F$.
\end{proof}
\begin{ex} \label{expb1}
Let $ L $ be a lattice from Example \ref{expob1}.
 $F= \{1,b,c,d\}$ is a proper filter  of $L$. Using Prop.
 \ref{propob1}, for any $n\geq 1$,
it is easy to check that $F$  is an n-fold obstinate filter of
$L$.
\end{ex}

The following example shows that any filters may  not be n-fold
obstinate filter.
\begin{ex} \label{expb2}
Let $ L $ be a lattice   from  Example \ref{expob2}.
 $F= \{1,c,d\}$ is a proper filter  of $L$.  For any $n\geq 1$, we
 have: $a,b\notin F$ but
$a^{n}\longrightarrow b=b\notin F$. Hence $F$  is not an n-fold
obstinate filter of $L$.
\end{ex}
\begin{prop}
Every n-fold obstinate filter of  $L$ is a (n+1)-fold obstinate
filter of $L$.
\end{prop}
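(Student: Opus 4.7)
The natural route is through the characterization of n-fold obstinate filters given in Proposition \ref{propob1}, which reduces the problem to a statement about the elements $(\overline{x^{n}})^{m}$. I would not go back to the original definition directly, because comparing $x^{n}\longrightarrow y$ to $x^{n+1}\longrightarrow y$ is less transparent than comparing their negations raised to powers.

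Concretely, let $F$ be an n-fold obstinate filter of $L$. The condition $0\notin F$ is the same in both definitions, so it transfers for free. To verify the second clause, pick any $x\in L$ with $x\notin F$. By Proposition \ref{propob1} applied to the n-fold hypothesis, there exists $m\geq 1$ with $(\overline{x^{n}})^{m}\in F$. I want to produce some $m'\geq 1$ such that $(\overline{x^{n+1}})^{m'}\in F$; then Proposition \ref{propob1} will give the (n+1)-fold property.

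The key inequality is $x^{n+1}\leq x^{n}$, which follows from $x\otimes y\leq x\wedge y\leq x$ in Proposition \ref{pro}(1) applied to $y=x^{n}$. Using Proposition \ref{pro}(8), this reverses under negation to give $\overline{x^{n}}\leq \overline{x^{n+1}}$, and iterating the monotonicity of $\otimes$ from Proposition \ref{pro}(8) yields
\[
(\overline{x^{n}})^{m}\leq (\overline{x^{n+1}})^{m}.
\]
Since $F$ is a filter and $(\overline{x^{n}})^{m}\in F$, the (F2) clause of Fact~1 gives $(\overline{x^{n+1}})^{m}\in F$. Taking $m'=m$ finishes the verification via Proposition \ref{propob1}.

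There is no real obstacle here: the argument is a one-line monotonicity chase once one passes to the characterization in Proposition \ref{propob1}. The only thing worth flagging is that one must actually use the characterization rather than the two-variable definition, because the direct approach would require relating $x^{n+1}\longrightarrow y$ to $x^{n}\longrightarrow y$ for an arbitrary $y\notin F$, which is clumsier than the single-variable reformulation.
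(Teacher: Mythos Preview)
Your proof is correct, but it takes a different route from the paper's, and your commentary mischaracterizes the alternative. The paper works directly with Definition~\ref{defob1}: given $x,y\notin F$, it uses $x^{n+1}\leq x^{n}$ (hence $x^{n}\longrightarrow y\leq x^{n+1}\longrightarrow y$ by Proposition~\ref{pro}(8)) together with $x^{n}\longrightarrow y\in F$ and upward closure to conclude $x^{n+1}\longrightarrow y\in F$, and symmetrically for $y^{n+1}\longrightarrow x$. This is exactly the step you called ``clumsier'' and advised against, yet it is a one-line monotonicity chase of the same flavor as yours---indeed slightly shorter, since it avoids the detour through Proposition~\ref{propob1} and the powers $(\overline{x^{n}})^{m}$. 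Your route via the single-variable characterization is perfectly valid and has the minor advantage of handling both implications at once, but your claim that one \emph{must} use the characterization is unfounded: the two-variable definition yields to the same monotonicity of $\longrightarrow$ in its first argument that you exploit for negation.
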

\begin{proof}
Let  $F$  be  an n-fold obstinate filter of $L$ and $x,y \notin
F$. We  show that  $ x^{n+1}\longrightarrow y\in F$ and  $
y^{n+1}\longrightarrow x \in F$.   By  hypothesis, $
x^{n}\longrightarrow y\in F$ and  $ y^{n}\longrightarrow x \in F$.
 (c) \\ By Prop. \ref{pro}(8), $ x^{n+1}\leq x^{n}$ and $
y^{n+1}\longrightarrow y^{n}$.\\ Then, $ x^{n}\longrightarrow
y\leq x^{n+1}\longrightarrow y$ and  $ y^{n}\longrightarrow x\leq
y^{n+1}\longrightarrow x $. \ (d)  \\
By (c) and (d) and the fact that $F$ is a filter, we obtain  $
x^{n+1}\longrightarrow y\in F$ and  $ y^{n+1}\longrightarrow x \in
F$. Hence $F$   is a (n+1)-fold obstinate filter of $L$.
\end{proof}

The extension theorem of n-fold obstinate filters is obtained from
the following result   and any $n\geq 1$:
\begin{thm}\label{th2}
Let $F_{1}, F_{2} $ two filter of  $L$ be such that
$F_{1}\subseteq F_{2} $. If  $F_{1} $ is an n-fold obstinate
filter of $L$ then so is $F_{2} $.
\end{thm}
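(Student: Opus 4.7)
The plan is to reduce everything to the characterization already established in Proposition \ref{propob1}, so that the argument becomes essentially a one-line contrapositive argument combined with the inclusion $F_{1}\subseteq F_{2}$.

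First I would address the non-triviality condition $0\notin F_{2}$, which is part of being an n-fold obstinate filter. This must either be in the hypothesis (i.e.\ $F_{2}$ is a proper filter) or derived; since $F_{1}$ is n-fold obstinate we have $0\notin F_{1}$, but this alone does not preclude $0\in F_{2}$, so one should read the statement as tacitly assuming $F_{2}$ is proper (as in the companion extension theorems \ref{b1}, \ref{ch4}, \ref{f2}).

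Next, to verify condition (ii) of Definition \ref{defob1} for $F_{2}$, I would use the equivalent form given by Proposition \ref{propob1}: it suffices to show that for every $x\in L$ with $x\notin F_{2}$ there exists $m\geq 1$ such that $(\overline{x^{n}})^{m}\in F_{2}$. Let $x\in L$ with $x\notin F_{2}$. Since $F_{1}\subseteq F_{2}$, the contrapositive of inclusion gives $x\notin F_{1}$. Applying Proposition \ref{propob1} to $F_{1}$ (which is n-fold obstinate by hypothesis) yields an integer $m\geq 1$ with $(\overline{x^{n}})^{m}\in F_{1}$, and using $F_{1}\subseteq F_{2}$ once more we conclude $(\overline{x^{n}})^{m}\in F_{2}$. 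Then Proposition \ref{propob1}, applied in the converse direction to $F_{2}$, shows that $F_{2}$ is an n-fold obstinate filter.

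There is essentially no obstacle, and in particular no computation to perform, since Proposition \ref{propob1} has already done all the algebraic work of translating the two-variable condition on $x^{n}\rightarrow y$ and $y^{n}\rightarrow x$ into a one-variable condition on $(\overline{x^{n}})^{m}$. The only subtle point worth flagging in the write-up is the properness issue for $F_{2}$; apart from that, the proof is a direct transport along $F_{1}\subseteq F_{2}$.
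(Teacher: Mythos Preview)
Your proposal is correct and follows essentially the same route as the paper: both arguments invoke Proposition~\ref{propob1}, pass from $x\notin F_{2}$ to $x\notin F_{1}$ via the inclusion, obtain $(\overline{x^{n}})^{m}\in F_{1}\subseteq F_{2}$, and conclude by the converse direction of Proposition~\ref{propob1}. Your additional remark about the properness condition $0\notin F_{2}$ is a point the paper's proof leaves implicit, so flagging it is a welcome clarification rather than a deviation.
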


\begin{proof}
Let $F_{1}, F_{2} $ two filter of  $L$ be such that
$F_{1}\subseteq F_{2} $. Assume that  $F_{1} $ is an n-fold
obstinate filter of $L$, and let   $x\notin F_{2} $. Since
$F_{1}\subseteq F_{2} $, we have $x\notin F_{1} $. Since  $F_{1} $
is an n-fold obstinate filter of $L$, by Prop. \ref {propob1},
there exist $m\geq 1$ such that   $(\overline{x^{n}})^{m}\in
F_{1}$. Since $F_{1}\subseteq F_{2} $, we have
$(\overline{x^{n}})^{m}\in F_{2}$. Therefore there exist $m\geq 1$
such that   $(\overline{x^{n}})^{m}\in F_{2}$. Hence by Prop. \ref
{propob1},  $F_{2} $ is an n-fold obstinate filter of $L$.
\end{proof}
From Theorem \ref{th2}, it is easy to shows the following result
for any $n\geq 1$:
\begin{cor}\label{cor3}
$\{1\} $ is an n-fold obstinate filter of  $L$ if and only if
every filter of  $L$ is an  n-fold obstinate filter of  $L$.
\end{cor}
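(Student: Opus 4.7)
The plan is to derive both directions immediately from the extension theorem (Theorem \ref{th2}) together with the trivial observation that $\{1\}$ is itself a filter and is contained in every filter of $L$.

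For the forward implication, I would assume that $\{1\}$ is an n-fold obstinate filter of $L$, and let $F$ be any filter of $L$. Since $1\in F$ by the definition of a filter, we have $\{1\}\subseteq F$. Applying Theorem \ref{th2} with $F_{1}=\{1\}$ and $F_{2}=F$ then yields that $F$ is an n-fold obstinate filter.

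For the converse, I would simply remark that $\{1\}$ is a filter of $L$, so if every filter of $L$ is an n-fold obstinate filter then in particular $\{1\}$ is one.

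The only subtlety — and I would not expect it to be a real obstacle — is making sure $\{1\}$ satisfies condition (i) of Definition \ref{defob1}, namely that $0\notin \{1\}$; this is automatic provided $L$ is nontrivial ($0\neq 1$), which is implicitly in force throughout this section whenever proper filters are considered. Apart from this, the argument is purely formal and requires no computation.
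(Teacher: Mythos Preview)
Your argument is correct and matches the paper's own justification, which simply states that the corollary follows from Theorem~\ref{th2}. Your remark about needing $0\neq 1$ so that $\{1\}$ satisfies condition~(i) of Definition~\ref{defob1} is a fair point that the paper leaves implicit.
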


\begin{prop}\label{propob3}
 The following conditions are
equivalent for any filter $F$  and any $n\geq 1$:
\begin{itemize}
\item[(i)]$F$ is an n-fold obstinate filter
\item[(ii)]$F$ is a maximal and n-fold positive implicative filter
\item[(iii)]$F$ is a maximal and n-fold  implicative filter
\end{itemize}
\end{prop}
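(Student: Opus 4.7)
The strategy is to prove a cycle of implications $(i)\Rightarrow(ii)\Rightarrow(iii)\Rightarrow(i)$, using the characterizations of maximal filters (Prop.~\ref{max}), n-fold positive implicative filters (Prop.~\ref{propo} and Cor.~\ref{lien11}), and n-fold implicative filters (Prop.~\ref{ch1}), together with the known inclusion ``n-fold positive implicative $\Rightarrow$ n-fold implicative'' (Thm.~\ref{lien1}).

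For $(i)\Rightarrow(ii)$: assume $F$ is n-fold obstinate. To get maximality, suppose $x\notin F$. By Prop.~\ref{propob1} there is $m\geq 1$ with $(\overline{x^{n}})^{m}\in F$. Since multiplication in the monoid is decreasing, $(\overline{x^{n}})^{m}\leq \overline{x^{n}}$, and $F$ being upward closed gives $\overline{x^{n}}\in F$; hence by Prop.~\ref{max}(ii) $F$ is maximal. To get the n-fold positive implicative property, I will verify $x\vee\overline{x^{n}}\in F$ for every $x\in L$ (Cor.~\ref{lien11}): if $x\in F$ then $x\leq x\vee\overline{x^{n}}$; if $x\notin F$ then $\overline{x^{n}}\in F$ by the argument just given, so in either case $x\vee\overline{x^{n}}\in F$.

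The implication $(ii)\Rightarrow(iii)$ is immediate from Thm.~\ref{lien1}.

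For $(iii)\Rightarrow(i)$, which I expect to be the main obstacle, I work in the quotient $L/F$. Maximality of $F$ combined with the lemma following Prop.~\ref{max} tells me $L/F$ is locally finite, and $F$ being n-fold implicative combined with Cor.~\ref{ch6} and Prop.~\ref{ch5}(iv) gives $(x/F)^{n}=(x/F)^{2n}$ for all $x\in L$. An easy induction using associativity of $\otimes$ then shows $(x/F)^{kn}=(x/F)^{n}$ for every $k\geq 1$. Now take $x\notin F$; by local finiteness there exists $k\geq 1$ with $(x/F)^{k}=0/F$, and multiplying gives $(x/F)^{j}=0/F$ for every $j\geq k$. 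Choosing $j=kn$, we obtain $(x/F)^{n}=(x/F)^{kn}=0/F$, i.e.\ $\overline{x^{n}}\in F$. Thus for every $x\notin F$ we have $(\overline{x^{n}})^{1}\in F$, and Prop.~\ref{propob1} delivers $(i)$. The only delicate point is the passage $x^{n}=x^{2n}\Rightarrow x^{n}=x^{kn}$ in $L/F$, which I will handle by a short induction on $k$ using $x^{(k+1)n}=x^{kn}\otimes x^{n}=x^{n}\otimes x^{n}=x^{2n}=x^{n}$.
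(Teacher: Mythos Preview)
Your proof is correct. The steps $(i)\Rightarrow(ii)$ and $(ii)\Rightarrow(iii)$ match the paper's argument essentially verbatim (the paper verifies n-fold positive implicativity via Prop.~\ref{propo}(iii) rather than Cor.~\ref{lien11}, but both reduce to the same observation that $x\notin F$ forces $\overline{x^{n}}\in F$).

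For $(iii)\Rightarrow(i)$, however, you take a genuinely different route from the paper. The paper stays inside $L$ and invokes Lemma~\ref{lemob}: since $F$ is n-fold implicative, $L_{x}=\{b:x^{n}\rightarrow b\in F\}$ is a filter; one checks $F\subsetneq L_{x}$ (because $x\in L_{x}\setminus F$), so maximality of $F$ forces $L_{x}=L$, giving $x^{n}\rightarrow y\in F$ directly. Your argument instead passes to the quotient $L/F$, combining local finiteness (from maximality) with the idempotency $(x/F)^{n}=(x/F)^{2n}$ (from n-fold implicativity) to conclude $(x/F)^{n}=0/F$ whenever $x\notin F$. Both arguments are short; the paper's is slightly more self-contained since it avoids the quotient machinery and the local-finiteness lemma, while yours has the conceptual advantage of explaining \emph{why} the result holds---in $L/F$ every non-unit element is nilpotent of order at most $n$---and it yields the stronger intermediate fact $\overline{x^{n}}\in F$ (not merely $x^{n}\rightarrow y\in F$ for particular $y$), from which n-fold obstinacy follows via Prop.~\ref{propob1} rather than the definition.
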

\begin{proof}
$(i) \longrightarrow (ii)$:  Assume that  $F$ is an n-fold
obstinate filter.   We show that $F$ is a maximal  and n-fold
positive implicative filter.  At first   we show that $F$ is a
maximal.  Let $x\notin F $, since $F$ is an n-fold obstinate
filter, by Prop. \ref{propob1}, there exist $m\geq 1$ such that
$(\overline{x^{n}})^{m}\in F$.  Since $(\overline{x^{n}})^{m}\leq
\overline{x^{n}} $, by the fact that $F$ is a filter, we get
$\overline{x^{n}}\in F$, by Prop.
\ref{max}, $F$ is a maximal filter.\\
On the other hand, assume in the contrary that there exist $x\in L
$ such that $\overline{x^{n}}\longrightarrow x\in F$ and $x\notin
F$. \ Since $F$ is an n-fold obstinate filter, by Prop.
\ref{propob1}, there exist $m\geq 1$ such that
$(\overline{x^{n}})^{m}\in F$.  Since $(\overline{x^{n}})^{m}\leq
\overline{x^{n}} $, by the fact that $F$ is a filter, we get
$\overline{x^{n}}\in F$. Since $F$ is a filter,  by the fact that
$\overline{x^{n}}\longrightarrow x\in F$, we obtain $x\in F$ which
is a contradiction. Hence  for all  $x\in L $,
$\overline{x^{n}}\longrightarrow x\in F$ implies $x\in F$.\\ By
Prop. \ref{propo}, $F$ is an n-fold positive implicative filter.\\
$(ii) \longrightarrow (iii)$: follows from Prop. \ref{lien1}\\
$(iii) \longrightarrow (i)$:  Assume that   $F$ is a maximal and
n-fold implicative filter of $L$.  Let $x, y L $ be such that $x,
y \notin F$. By Lemma \ref{lemob}, $L_{x}=\{b\in L:
x^{n}\longrightarrow b\in F \}$ is a filter of $L$. $L_{y}=\{b\in
L: y^{n}\longrightarrow b\in F \}$ is a filter of $L$.\\ Let $z\in
F$, since $z\leq x^{n}\longrightarrow z$, we have $
x^{n}\longrightarrow z \in F  $, hence $ x^{n}\longrightarrow z\in
F  $, so    $z\in L_{x}$ and we obtain $F\subseteq L_{x}$. On the
other hand, $x^{n}\longrightarrow x = 1 \in F$ since
$x^{n}\subseteq x $, hence  $x\in L_{x}$. By hypothesis, $x \notin
F$, So $F\varsubsetneq L_{x} \subseteq L$. Since  $F$ is a maximal
filter of $L$, we get $ L_{x} = L$.  Therefore $y\in L_{x}$ or
equivalently $ x^{n}\longrightarrow y\in F  $. Similarly, we get $
y^{n}\longrightarrow x\in F$.  Hence $F$ is an n-fold obstinate
filter of  $ L$.
\end{proof}
\begin{prop}\label{propob4}
 The following conditions are
equivalent for any filter $F$ and any $n\geq 1$:
\begin{itemize}
\item[(i)]$F$ is an n-fold obstinate filter
\item[(ii)]$F$ is a maximal and n-fold boolean filter
\item[(iii)]$F$ is a prime of second kind and n-fold  boolean filter
\end{itemize}
\end{prop}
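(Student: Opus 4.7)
The plan is to prove the cyclic chain $(i) \Longrightarrow (ii) \Longrightarrow (iii) \Longrightarrow (i)$, invoking Prop.~\ref{propob3} and Prop.~\ref{lien12} to reduce most of the work to already-established material. First note that by Prop.~\ref{lien12}, the concepts of n-fold boolean filter and n-fold positive implicative filter coincide in any residuated lattice, so whenever the statement mentions n-fold boolean we may replace it by n-fold positive implicative and vice versa.

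For $(i) \Longrightarrow (ii)$: by Prop.~\ref{propob3}, any n-fold obstinate filter is both maximal and n-fold positive implicative, hence, by Prop.~\ref{lien12}, maximal and n-fold boolean. For $(ii) \Longrightarrow (iii)$: by Remark~(iv) in the preliminaries, every maximal filter is a prime filter of the second kind, so an n-fold boolean maximal filter is automatically a prime-of-second-kind n-fold boolean filter.

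The only remaining direction is $(iii) \Longrightarrow (i)$, and I expect this to be the main (though still short) obstacle, because here we must upgrade ``prime of the second kind'' to ``maximal'' using the n-fold boolean hypothesis. The strategy is to apply Prop.~\ref{max}(ii): let $x \notin F$; since $F$ is n-fold boolean we have $x \vee \overline{x^{n}} \in F$; since $F$ is prime of the second kind, either $x \in F$ or $\overline{x^{n}} \in F$, and the first possibility is excluded, so $\overline{x^{n}} \in F$. Thus $F$ satisfies condition (ii) of Prop.~\ref{max} and is therefore maximal. Combining this with the n-fold boolean (= n-fold positive implicative) hypothesis and invoking $(ii)\Longrightarrow(i)$ of Prop.~\ref{propob3} gives that $F$ is n-fold obstinate.

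In summary the proof is essentially a repackaging of Prop.~\ref{propob3} through the identification n-fold boolean $=$ n-fold positive implicative, with the one genuinely new observation being that a prime-of-second-kind n-fold boolean filter is maximal, which follows in a single line from the decomposition $x \vee \overline{x^{n}} \in F$ and Prop.~\ref{max}. No long calculations are needed; I would keep the write-up to three short blocks corresponding to the three implications above.
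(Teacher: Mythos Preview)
Your proof is correct and follows the same cyclic scheme $(i)\Rightarrow(ii)\Rightarrow(iii)\Rightarrow(i)$ as the paper, with the same key step in $(iii)\Rightarrow(i)$: from $x\notin F$ and $x\vee\overline{x^{n}}\in F$ together with primeness of the second kind, deduce $\overline{x^{n}}\in F$. The only differences are cosmetic: for $(i)\Rightarrow(ii)$ the paper reproves $x\vee\overline{x^{n}}\in F$ by a direct case split instead of citing Prop.~\ref{lien12} as you do, and for $(iii)\Rightarrow(i)$ the paper, having obtained $\overline{x^{n}}\in F$, concludes immediately via Prop.~\ref{propob1} rather than passing through maximality and Prop.~\ref{propob3} as you propose---your detour is valid but unnecessary.
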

\begin{proof}
$(i) \longrightarrow (ii)$:  Assume that  $F$ is an n-fold
obstinate filter.  First observe that by Prop. \ref{propob3}, $F$
is a maximal filter. Let $x\in L$. We have two cases: $x\in F$ or $x\notin F$   \\
case 1: $x\in F$. Since  $x\leq x\vee \overline{x^{n}}$, by the
fact that $F$ is a filter, we  have  $ x\vee \overline{x^{n}}\in
F$.\\
case 2: $x\notin F$.  Since $F$ is an n-fold obstinate filter, by
Prop.  \ref{propob1},  there exist $m\geq 1$ such that
$(\overline{x^{n}})^{m}\in F$. Since $(\overline{x^{n}})^{m}\leq
\overline{x^{n}}\leq  x\vee \overline{x^{n}} $, we have
$(\overline{x^{n}})^{m}\leq x\vee \overline{x^{n}}$.
   By the fact that $F$ is a filter, we  have  $ x\vee
\overline{x^{n}}\in F$.\\
Since in both the two cases  $x\vee \overline{x^{n}}\in F$, it is
clear that for all $x\in L$, $x\vee \overline{x^{n}}\in F$, hence
$F$ is an n-fold  boolean filter.\\
$(ii) \longrightarrow (iii)$: Follows in the fact that a maximal
filter of $L$ is a  prime filter of second kind of  $L$.\\
 $(iii) \longrightarrow(i)$: Assume that $F$ is a prime filter of the second kind and n-fold  boolean
 filter.   Let $x\in L$  be such that $x\notin F$.    $F$ is  an n-fold  boolean
 filter, we have $x   \vee \overline{x^{n}}\in F$. Since  $F$ is a
 prime filter of the second kind  , by the fact $x\notin F$, we have $ \overline{x^{n}}^{1}\in F$.
 by Prop.  \ref{propob1},  $F$ is an n-fold obstinate filter.
\end{proof}

Combine Prop. \ref{propob3} and  Prop. \ref{lp}, we have the
following proposition.

\begin{prop}\label{propob5}
 Any n-fold obstinate  filter $F$ is an  n-fold fantastic filter. The
 converse is not true in general.
\end{prop}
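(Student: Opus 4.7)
The plan is essentially to chain together the two propositions suggested by the hint. By Prop. \ref{propob3}, if $F$ is an n-fold obstinate filter of $L$, then $F$ is (in particular) an n-fold positive implicative filter of $L$. By Prop. \ref{lp}, every n-fold positive implicative filter is an n-fold fantastic filter. Composing these two implications gives at once that every n-fold obstinate filter is n-fold fantastic, which settles the first assertion.

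For the second assertion, the task is to exhibit an n-fold fantastic filter that is not n-fold obstinate. No computation is required, since a suitable example is already available earlier in the paper. Namely, take the residuated lattice $L$ of Example \ref{expob2} and the filter $F=\{1\}$. In the remark following Prop. \ref{lp} it is observed that $\{1\}$ is an n-fold fantastic filter of this $L$ but is not an n-fold positive implicative filter, because $\overline{a^{n}}\longrightarrow a\in\{1\}$ while $a\notin\{1\}$. If $\{1\}$ were an n-fold obstinate filter, then by Prop. \ref{propob3} it would have to be n-fold positive implicative, contradicting what we just noted. Hence $\{1\}$ is n-fold fantastic but not n-fold obstinate, showing that the converse fails in general.

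There is no serious obstacle here: both halves are short reductions to already-established results. The only thing to be a bit careful about is wording the counterexample so that it is clear the failure of the positive implicative property is what rules out obstinacy, via the characterization in Prop. \ref{propob3}. The write-up therefore consists of two short paragraphs: one two-line implication chain for the forward direction, and one sentence identifying the counterexample together with the one-line argument using Prop. \ref{propob3} to deduce that the example is indeed not n-fold obstinate.
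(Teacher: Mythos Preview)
Your proof is correct and follows essentially the same approach as the paper: the forward implication is obtained by combining Prop.~\ref{propob3} with Prop.~\ref{lp}, and the counterexample is the filter $\{1\}$ in the residuated lattice of Example~\ref{expob2}. Your justification for why $\{1\}$ is not n-fold obstinate (via Prop.~\ref{propob3} and failure of the positive implicative property) is slightly more detailed than the paper's, which simply asserts this fact.
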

The following example shows that the
 converse of the Proposition \ref{propob5}  is not true in general.

\begin{ex}
Let $L$ be a residuated lattice from Example \ref{expob2}. It is
easy to check that $\{1\}$ is an n-fold fantastic filter but not
n-fold obstinate  filter.
\end{ex}
Follows from Prop.\ref{propob4} and Prop.\ref{lien12}, we have the
following result:
\begin{cor}\label{propob31}
 The following conditions are
equivalent for any filter $F$  and any $n\geq 1$:
\begin{itemize}
\item[(i)]$F$ is an n-fold obstinate filter
\item[(ii)]$F$ is a prime filter in the second kind and n-fold positive implicative filter
\end{itemize}
\end{cor}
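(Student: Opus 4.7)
The plan is to derive this equivalence as a direct corollary by chaining the two already-established results cited in the hint, namely Proposition \ref{propob4} and Proposition \ref{lien12}. No new computation with the residuated lattice operations should be needed; the whole argument is a substitution of equivalent notions.

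First, I would invoke Proposition \ref{propob4}, which gives the equivalence between $F$ being an n-fold obstinate filter and $F$ being simultaneously a prime filter of the second kind and an n-fold boolean filter. This reduces the task to rewriting the conjunction ``prime of the second kind and n-fold boolean'' in terms of ``prime of the second kind and n-fold positive implicative.''

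Next, I would apply Proposition \ref{lien12}, which states that in any residuated lattice the concepts of n-fold boolean filters and n-fold positive implicative filters coincide. Substituting this equivalence inside the conjunction from the previous step converts ``prime of the second kind and n-fold boolean'' into ``prime of the second kind and n-fold positive implicative,'' which is exactly condition (ii).

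Combining these two rewrites yields (i) $\Longleftrightarrow$ (ii). There is no real obstacle here: both directions run in parallel since both cited propositions are biconditionals, so I simply chain the equivalences. The proof can be written as a one- or two-line argument, and for clarity I would present it as the chain (i) $\Longleftrightarrow$ ``$F$ is prime of the second kind and n-fold boolean'' $\Longleftrightarrow$ (ii), citing Prop.\ \ref{propob4} for the first biconditional and Prop.\ \ref{lien12} for the second.
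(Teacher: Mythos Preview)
Your proposal is correct and matches the paper's own justification exactly: the paper simply states that the corollary follows from Prop.~\ref{propob4} (using its equivalence (i)$\Leftrightarrow$(iii)) and Prop.~\ref{lien12}, which is precisely the chain of biconditionals you describe.
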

\begin{prop}\label{propob6}for any $n\geq 1$,
 a filter $F$ is an n-fold obstinate  filter  if and only if every filter of
 $L/F$ is an n-fold obstinate  filter of  $L/F$.
\end{prop}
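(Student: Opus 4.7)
The plan is to leverage Prop.~\ref{propob3}, which characterises n-fold obstinate filters as exactly those filters that are simultaneously maximal and n-fold positive implicative, together with two already-established transfer results: the lemma immediately following Prop.~\ref{max} ($F$ is maximal in $L$ iff $L/F$ is locally finite), and Prop.~\ref{prcha} ($F$ is n-fold positive implicative iff $L/F$ is an n-fold positive implicative residuated lattice, which by Prop.~\ref{prch} is in turn equivalent to $\{1/F\}$ being an n-fold positive implicative filter of $L/F$).

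For the forward direction, starting from $F$ n-fold obstinate I would apply Prop.~\ref{propob3} to get that $F$ is both maximal and n-fold positive implicative. The maximality transfer yields that $L/F$ is locally finite; this in turn makes $\{1/F\}$ the unique proper filter of $L/F$ (any $x/F\neq 1/F$ satisfies $(x/F)^{m}=0/F$ for some $m\geq 1$ and hence cannot belong to a proper filter) and makes $\{1/F\}$ itself maximal in $L/F$. The positive-implicative transfer, combined with Prop.~\ref{prch}, makes $\{1/F\}$ an n-fold positive implicative filter of $L/F$. A further application of Prop.~\ref{propob3} now inside $L/F$ then shows that $\{1/F\}$ is n-fold obstinate in $L/F$. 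Since $\{1/F\}$ exhausts the proper filters of $L/F$, every (proper) filter of $L/F$ is n-fold obstinate.

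For the backward direction, I would specialise the hypothesis to the filter $\{1/F\}$ of $L/F$, which is then necessarily n-fold obstinate in $L/F$. Prop.~\ref{propob3} applied in $L/F$ gives that $\{1/F\}$ is maximal and n-fold positive implicative in $L/F$. Running each of the two transfer results in reverse shows that $F$ is maximal and n-fold positive implicative in $L$, and a final appeal to Prop.~\ref{propob3} yields that $F$ is n-fold obstinate.

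The main obstacle is essentially bookkeeping about properness: n-fold obstinate filters are by definition proper ($0\notin F$), so the phrase ``every filter of $L/F$ is n-fold obstinate'' must be interpreted as quantifying over proper filters, and one must verify -- via local finiteness of $L/F$ -- that the only such filter is $\{1/F\}$. Once this subtlety is in place, the whole argument reduces to chaining Prop.~\ref{propob3}, the maximal/locally-finite lemma, Prop.~\ref{prcha} and Prop.~\ref{prch} in both directions.
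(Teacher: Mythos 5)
Your argument is correct, but it follows a genuinely different route from the paper. The paper proves the equivalence directly at the level of elements: using that $x\in F$ iff $x/F=1/F$, it shows that $F$ is n-fold obstinate in $L$ exactly when $\{1/F\}$ is n-fold obstinate in $L/F$, and then invokes the extension result (Theorem \ref{th2} via Corollary \ref{cor3}) to pass from $\{1/F\}$ to every filter of $L/F$; no maximality or positive-implicativity enters at all. You instead factor everything through Prop.~\ref{propob3} (obstinate $=$ maximal $+$ n-fold positive implicative) and the two transfer results (maximal $\Leftrightarrow$ locally finite quotient; Prop.~\ref{prcha}/\ref{prch} for positive implicativity). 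Your route is heavier but buys something the paper leaves implicit: your local-finiteness argument shows that $\{1/F\}$ is the \emph{only} proper filter of $L/F$, which honestly settles the properness bookkeeping in the phrase ``every filter of $L/F$'' (the paper's appeal to Corollary \ref{cor3} quietly ignores the improper filter, which can never be obstinate). The paper's route is shorter and more elementary, needing only the definition and the quotient description. One small point in your backward direction deserves a sentence: to transfer maximality of $\{1/F\}$ in $L/F$ back to maximality of $F$ in $L$ you need either the observation that $(L/F)/\{1/F\}\cong L/F$ (so the locally-finite lemma applies), or the standard correspondence sending a proper filter $G\supseteq F$ of $L$ to the proper filter $G/F$ of $L/F$; ``running the transfer result in reverse'' hides exactly this step, which is routine but should be stated.
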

\begin{proof}
 Assume that   $F$ is an n-fold obstinate  filter.  Let  $x,y \in
 L$ be such that $x/F,y/F \notin \{1/F\}$, then  $x,y \notin F$. Since  $F$ is an n-fold obstinate
 filter, we have $x^{n}\longrightarrow y,y^{n}\longrightarrow x \notin
 F$. From this we have,  $(x^{n}\longrightarrow y)/F, (y^{n}\longrightarrow x)/F\notin
 1/F$ or equivalently $(x/F)^{n}\longrightarrow y/F,(y/F)^{n}\longrightarrow x/F \notin
 1/F$.  Hence  $\{1/F\}$ is an n-fold obstinate  filter of  $L/F$
 and by Corollary \label{cor3}, every filter of
 $L/F$ is an n-fold obstinate  filter of  $L/F$.\\
 Conversely,  let $x,y \notin F$. Then $x/F,y/F \notin \{1/F\}$.
 Since $\{1/F\}$ is an n-fold obstinate  filter of  $L/F$, we have
 $(x/F)^{n}\longrightarrow y/F,(y/F)^{n}\longrightarrow x/F \notin
 1/F$ or equivalently $(x^{n}\longrightarrow y)/F, (y^{n}\longrightarrow x)/F\notin
 1/F$. So  $x^{n}\longrightarrow y,y^{n}\longrightarrow x \notin
 F$ and  $F$ is an n-fold obstinate  filter.
\end{proof}

\begin{defi}
A residuated lattice  $L$ is said to be  an n-fold obstinate
residuated it satisfies the following condition :\\  For all
$x,y\in L$,   $x,y\neq 1$ implies $x^{n}\longrightarrow y=1$ and
$y^{n}\longrightarrow x=1$.
\end{defi}
\begin{prop}\label{propob18}
 The following conditions are
equivalent for any $n\geq 1$:
\begin{itemize}
\item[(i)]$L$ is an n-fold obstinate  residuated lattice
\item[(ii)]$\{1\}$ is an  n-fold obstinate filter of $L$.
\item[(iii)]Every filter of $L$ is n-fold obstinate
\end{itemize}
\end{prop}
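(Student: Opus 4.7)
The plan is to prove this by establishing the cycle of implications (i) $\Longleftrightarrow$ (ii) $\Longleftrightarrow$ (iii), both of which turn out to reduce to previously established facts. The whole argument is essentially a matter of unpacking definitions and invoking Corollary \ref{cor3}; there is no serious technical obstacle.

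First I would handle (i) $\Longleftrightarrow$ (ii). Unpacking the definition of an n-fold obstinate filter in the special case $F=\{1\}$: condition $0\notin\{1\}$ is just the standing nondegeneracy assumption $0\neq 1$, while the condition $x,y\notin\{1\}\Rightarrow x^{n}\longrightarrow y\in\{1\}\mbox{ and }y^{n}\longrightarrow x\in\{1\}$ translates word for word into $x,y\neq 1\Rightarrow x^{n}\longrightarrow y=1\mbox{ and }y^{n}\longrightarrow x=1$, which is exactly the definition of an n-fold obstinate residuated lattice. Hence (i) and (ii) say the same thing.

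Next, (ii) $\Longleftrightarrow$ (iii) is immediate from Corollary \ref{cor3}, which already asserts that $\{1\}$ is an n-fold obstinate filter of $L$ if and only if every filter of $L$ is n-fold obstinate. Combining this with the equivalence (i) $\Longleftrightarrow$ (ii) from the previous paragraph closes the cycle and yields the proposition.

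The only step that needs a little care is the first one, namely checking that the clause $0\notin\{1\}$ in the definition of obstinate filter does not get in the way. This is harmless under the usual convention that $L$ is a nondegenerate residuated lattice; if one wanted to be explicit one could simply add the hypothesis $0\neq 1$ to (i), which is where I expect the only subtlety (and no real obstacle) to lie.
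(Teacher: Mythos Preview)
Your proof is correct and follows essentially the same approach as the paper's own proof. The paper argues the cycle $(i)\Rightarrow(ii)\Rightarrow(iii)\Rightarrow(i)$, invoking Theorem~\ref{th2} (the extension theorem) for $(ii)\Rightarrow(iii)$ and unpacking Definition~\ref{defob1} at $F=\{1\}$ for the remaining steps; your version differs only cosmetically in that you cite Corollary~\ref{cor3} (itself an immediate consequence of Theorem~\ref{th2}) and organize the argument as two biconditionals rather than a cycle.
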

\begin{proof}
$(i)\longrightarrow (ii)$: Obvious \\
$(ii)\longrightarrow (iii)$:Follows from Theorem \ref{th2} \\
$(iii)\longrightarrow (i)$:Assume that every filter of $L$ is
n-fold obstinate, then $\{1\}$ is an  n-fold obstinate filter of
$L$ since $\{1\}$ is a filter of $L$. The thesis follows by
setting $F= \{1\}$ in  Definition \ref{defob1}.
\end{proof}

The following example shows that the notion of n-fold obstinate
residuated lattice exist.
\begin{ex}\label{ex97}
Let $ L $ be a lattice from Example \ref{expob1}.
 $F$  is an n-fold obstinate filter of $L$, then  by Prop. \ref{propob6} and Prop. \ref{propob18},
  $L/F$ is an n-fold obstinate
residuated lattice, for any $n\geq 1$.
\end{ex}

The following example shows that  any residuated lattice may not
be n-fold obstinate residuated lattice.
\begin{ex}\label{expp9}
Let $ L $ be a lattice from Example \ref{exp}. For any $n\geq 1$,
 $F=\{1, d\}$  is not an n-fold obstinate filter of $L$, then by Prop. \ref{propob6} and Prop. \ref{propob18},  $L$ is not an n-fold obstinate
residuated lattice.
\end{ex}
Follows from Prop. \ref{propob18}, Prop. \ref{propob4} and Prop.
\ref{lien16}, we have the following proposition.
\begin{prop}\label{propob54}
n-fold obstinate  residuated lattices are n-fold boolean
residuated lattices
\end{prop}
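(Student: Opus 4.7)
The plan is to chain together three results that have already been established, so the argument is essentially a bookkeeping exercise rather than a computation. First I would start from the hypothesis that $L$ is an n-fold obstinate residuated lattice and invoke Prop.~\ref{propob18} to conclude that $\{1\}$ is an n-fold obstinate filter of $L$.

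Next, I would apply the implication $(i)\Longrightarrow (ii)$ of Prop.~\ref{propob4} to the filter $F=\{1\}$. This gives that $\{1\}$ is simultaneously a maximal filter and an n-fold boolean filter of $L$; for the present purpose I only need the second half of this conclusion, namely that $\{1\}$ is an n-fold boolean filter, i.e.\ $x\vee\overline{x^{n}}\in\{1\}$ for every $x\in L$, which is exactly the equality $x\vee\overline{x^{n}}=1$.

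Finally, I would invoke Remark~\ref{lien16}, which states that $L$ is an n-fold boolean residuated lattice if and only if $\{1\}$ is an n-fold boolean filter of $L$. Combining the three steps yields the desired conclusion that $L$ is n-fold boolean.

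There is no real obstacle here: every implication needed is already available in the paper, so the only thing to be careful about is to apply Prop.~\ref{propob4} to $F=\{1\}$ (which is legitimate since $\{1\}$ is a proper filter whenever $L$ is n-fold obstinate, because $0\notin\{1\}$) and to quote Remark~\ref{lien16} in the direction ``n-fold boolean filter $\Rightarrow$ n-fold boolean residuated lattice''.
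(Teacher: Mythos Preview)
Your proposal is correct and follows exactly the route the paper indicates: the paper simply states that the result follows from Prop.~\ref{propob18}, Prop.~\ref{propob4} and Remark~\ref{lien16}, which is precisely the chain you spell out. Your added care about $\{1\}$ being proper is a nice touch but not something the paper bothers to mention.
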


\section{ Diagram among  type of n-fold filters in Residuated Lattices}

\end{document}